\documentclass[twoside,11pt]{entics} 
\usepackage{enticsmacro}
\usepackage{graphicx}
\usepackage[all]{xy}

\usepackage{amsmath,amssymb}
\usepackage{stmaryrd,enumerate,mathpartir}
\usepackage{graphicx,graphbox,hyperref}
\usepackage{tikz-cd}
\usepackage{csquotes}
\usepackage{mdframed}
\newcommand{\X}{\mathbb{X}}
\newcommand{\A}{\mathbb{A}}

\newcommand{\bh}{\mathbf{H}}
\newcommand{\bv}{\mathbf{V}}

\newcommand{\getl}{\mathsf{getL}}
\newcommand{\putr}{\mathsf{putR}}
\newcommand{\getr}{\mathsf{getR}}
\newcommand{\putl}{\mathsf{putL}}

\newcommand{\corner}[1]{{\text{}^\ulcorner_\llcorner\!{#1}\!_\lrcorner^\urcorner}}
\newcommand{\ex}[1]{{#1}^{\circ\bullet}}

\newcommand{\cells}[4]{({\scriptstyle #1} {{\scriptstyle #2} \atop {\scriptstyle #3}} {\scriptstyle #4})}
\newcommand{\wiggle}[4]{\{{\scriptstyle #1} {{\scriptstyle #2} \atop {\scriptstyle #3}} {\scriptstyle #4}\}}
\newcommand{\floatcells}[4]{\left({\scriptstyle #1} {{\scriptstyle #2} \atop {\scriptstyle #3}} {\scriptstyle #4}\right)}
\newcommand{\floatwiggle}[4]{\left\{{\scriptstyle #1} {{\scriptstyle #2} \atop {\scriptstyle #3}} {\scriptstyle #4}\right\}}

\newcommand{\from}{\leftarrow}

\newcommand{\asn}{\downarrow}

\makeatletter
\providecommand{\leftsquigarrow}{%
  \mathrel{\mathpalette\reflect@squig\relax}%
}
\newcommand{\reflect@squig}[2]{%
  \reflectbox{$\m@th#1\rightsquigarrow$}%
}
\makeatother

\newcommand{\seq}{,\!\ldots\!,}

\makeatletter
\newcommand{\vast}{\bBigg@{4}}
\newcommand{\Vast}{\bBigg@{5}}
\makeatother

\newif\ifhideproofs

\ifhideproofs
\usepackage{environ}
\NewEnviron{hide}{}

\fi

\def\conf{MFPS 2026} 	
\volume{NN}			
\def\lastname{Nester, Voorneveld} 
\begin{document}
\begin{frontmatter}
  \title{A Simple Categorical Calculus of Interacting Processes} 						
 \thanks[ALL]{Chad Nester was supported by the Estonian Research Council grant PRG2764. Niels Voorneveld was supported by Estonian Research Council grant No. PRG1780.}   
  \author{Chad Nester\thanksref{a}\thanksref{myemail}}	
   \author{Niels Voorneveld\thanksref{b}\thanksref{coemail}}		
   \address[a]{University of Tartu\\				
    Tartu, Estonia}  							
   \thanks[myemail]{Email: \href{mailto:nester@ut.ee} {\texttt{\normalshape
        nester@ut.ee}}} 
  \address[b]{Cybernetica AS\\
    Tallinn, Estonia} 
  \thanks[coemail]{Email:  \href{mailto:niels.voorneveld@cyber.ee} {\texttt{\normalshape
        niels.voorneveld@cyber.ee}}}
\begin{abstract} 
  We present a calculus that models a simple sort of process interaction. Our calculus consists of a collection of terms together with a rewrite relation, parameterised by an arbitrary multicategory whose morphisms we understand as non-interactive processes. We show that our calculus is confluent and terminating, and that terms modulo the induced convertibility relation form a virtual double category. We relate our calculus to the free cornering of a monoidal category, which is a double-categorical model of process interaction that is similar in spirit to the calculus presented herein. Precisely, we construct a functor from the virtual double category given by our calculus into the underlying virtual double category of the free cornering of the free monoidal category on the multicategory of non-interacting processes. If we think of the terms of our calculus as programs and the rewriting system as an operational semantics for these programs, this functor gives a sound denotational semantics for our calculus in terms of the free cornering.
\end{abstract}
\begin{keyword}
  Category Theory,
  Message Passing,
  Term Rewriting,
  Programming Language Semantics,
  Virtual Double Categories
  \end{keyword}
\end{frontmatter}

\section{Introduction}\label{sec:intro}
This paper concerns the categorical semantics of interaction. Specifically, we give a calculus of interacting processes and investigate its categorical structure. Our calculus is parameterised by a multicategory $\mathcal{M}$, the morphisms of which we think of as non-interactive processes. Our calculus augments these non-interactive processes with the ability to interact via a simple message-passing mechanism. More concretely, our calculus consists of a collection $T(\mathcal{M})$ of terms presented as a sequent calculus together with a rewrite relation $\to$ on those terms. This rewrite relation is confluent and terminating. When considered modulo the induced convertibility relation, the terms of our calculus form a virtual double category $[\mathcal{M}]$.

Our calculus is similar in spirit to the free cornering $\corner{\mathbb{A}}$ of a monoidal category $\mathbb{A}$~\cite{Nester2021a,Nester2023}. Both model the same sort of interaction, and are constructed from a pre-existing collection of non-interacting processes, being the monoidal category $\mathbb{A}$ in the case of the free cornering. Given this, it is not too surprising that the two formalisms are also related in a technical sense. The free cornering of a monoidal category is a strict double category, and there is a close relationship between strict double categories and virtual double categories, analogous to the relationship between strict monoidal categories and multicategories. Specifically, there are adjunctions:
\begin{mathpar}
  \begin{tikzcd}
    \mathsf{Mul} \ar[r,phantom,"\scriptstyle{\bot}"] \ar[r,shift left=0.5em,"F"]  & \mathsf{Mon} \ar[l,shift left=0.5em,"U"]
  \end{tikzcd}
  
  \begin{tikzcd}
    \mathsf{VDC} \ar[r,phantom,"\scriptstyle{\bot}"] \ar[r,shift left=0.5em,"F"]  & \mathsf{Dbl} \ar[l,shift left=0.5em,"U"]
  \end{tikzcd}
\end{mathpar}
where $\mathsf{Mul}$, $\mathsf{Mon}$, $\mathsf{VDC}$, and $\mathsf{Dbl}$ are the category of multicategories, strict monoidal categories, virtual double categories, and strict double categories, respectively. In fact, these two adjunctions are instances of the same general theorem concerning notions of generalised multicategory that arise from cartesian monads~\cite{Leinster2004}.

For any multicategory $\mathcal{M}$, there is a functor of virtual double categories: 
\[
  \llbracket - \rrbracket : [\mathcal{M}] \to U\left(\corner{F(\mathcal{M})}\right)
\]

This is to say that our term calculus admits a sound denotational semantics, where terms are interpreted as cells of the free cornering of $F(\mathcal{M})$. The primary difference between the calculus presented here and the free cornering construction is that our calculus is \emph{dynamic}, being presented as a term rewriting system, while the free cornering is \emph{static}, in the sense that it is presented in terms of equations.

\subsection{Organisation}
In Section~\ref{sec:free-cornering} we recall the free cornering of a monoidal category. This is done before anything else so that the string diagrams for the free cornering, which we find useful in conveying the intuition behind our term calculus, are available in the rest of our development. In Section~\ref{sec:term-calculus} we construct the terms $T(\mathcal{M})$ and rewrite relation $\to$ that comprise our calculus, and show that $\to$ is confluent and terminating. We also give the interpretation $\llbracket - \rrbracket$ of terms as cells of the free cornering and show that it is coherent with respect to our rewrite relation, in the sense that if $t \stackrel{*}{\leftrightarrow} s$ then $\llbracket t \rrbracket = \llbracket s \rrbracket$. In Section~\ref{sec:vdc} we show that terms modulo the convertibility relation $\stackrel{*}{\leftrightarrow}$ form a virtual double category, and that $\llbracket - \rrbracket$ extends to a functor of virtual double categories in the manner described above. We conclude in Section~\ref{sec:conclusion}. Proofs have been omitted from the body of the paper, and appear in Appendix~\ref{app:proofs}.

\subsection{Prerequisites}
The reader is assumed to have some familiarity with category theory, including adjunctions, monoidal categories, multicategories, and the basic ideas of categorical logic (see e.g.,~\cite{MacLane1971,Lambek1986,Shulman2016}). Familiarity with double categories and virtual double categories (see e.g.,~\cite{Leinster2004,Crutwell2010}) is also effectively necessary, although we provide a brief introduction in Appendix~\ref{app:abstract}. Some knowledge of term rewriting is also required. We will use notions like confluence, termination, and local confluence together with a few classical results and techniques in the theory of term rewriting without introduction or comment. An excellent reference is~\cite{Baader1998}. We imagine that some level of awareness concerning the field of programming language semantics would be helpful, but such awareness is not, strictly speaking, necessary to follow the development herein.

\subsection{Related Work}
Our work is inspired by Cockett and Pastro's logic of message passing~\cite{Cockett2009} and Wadler's work on the semantics of session types~\cite{Wadler2014}. Other significant antecedents along these lines include the work of Caires and Pfenning~\cite{Caires2010}, Honda~\cite{Honda1993}, and Bellin and Scott~\cite{Bellin1994}. 

The calculus presented here grew out of our attempts to provide a more ``operational'' account of the free cornering of a monoidal category, which was introduced by Nester~\cite{Nester2021a,Nester2023} and has been the subject of a series of papers exploring its use as a categorical model of process interaction~\cite{Nester2022,Boisseau2023,Nester2024}.

\section{The Free Cornering of A Monoidal Category}\label{sec:free-cornering}
In this section we recapitulate the construction of the free cornering of a monoidal category and its interpretation as a theory of interacting processes. The free cornering is a single-object strict double category, and the reader who is unfamiliar with double categories may wish to consult Appendix~\ref{app:abstract}.

Morphisms in monoidal categories often admit interpretations as resource-transforming processes (see e.g.,~\cite{Coecke2014}). The objects of the category in question are interpreted as collections of resources, with the unit $I$ denoting the empty collection and $A \otimes B$ denoting collection consisting of the resources of both $A$ and $B$. Then a morphism $f : A \to B$ is interpreted as a process that transforms the resources of $A$ into those of $B$, composition is interpreted as sequencing, and the tensor product allows independent processes to occur simultaneously. For example if we have morphisms $\mathbf{bake} : \mathsf{Dough} \otimes \mathsf{Oven} \to \mathsf{Bread} \otimes \mathsf{Oven}$ and $\mathbf{knead} : \mathsf{Dough} \to \mathsf{Dough}$ representing the processes of baking and kneading dough, respectively, then the composite morphism $(\textbf{knead} \otimes 1_{\mathsf{Oven}});\textbf{bake}$ represents the combined process of kneading and then baking a given unit of dough. It is often helpful to picture such processes using string diagrams, as in:
\begin{equation} \label{diagram:baking}
  \includegraphics[height=2cm,align=c]{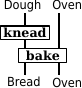} 
\end{equation}

The purpose of this section is to recall the \emph{free cornering} of a monoidal category, which is a strict single-object double category that models a simple sort of process interaction. If morphisms of a given monoidal category model processes of some kind, then cells of the free cornering model interacting processes of that kind. Interaction in the free cornering is governed by simple protocol types, constructed as follows:

\begin{definition}\label{def:exchanges}
  Let $X$ be a set. We define the monoid $\ex{X}$ of \emph{$X$-valued exchanges} to be the free monoid on the set of polarised elements of $X$, as in $\ex{X} = (X \times \{\circ,\bullet\})^*$. Explicitly, elements of $\ex{X}$ are sequences $A_1^{p_1}\cdots A_n^{p_n}$ with $A_i \in X$ and $p_i \in \{\circ,\bullet\}$ for each $1 \leq i \leq n$. We write $\lambda$ to indicate the empty sequence, and write either $UW$ or $U,W$ to indicate the concatenation of sequences $U$ and $W$.
\end{definition}

If elements of $X$ can be understood as resources, then elements of $\ex{X}$ can be understood as simple interaction protocols. Each protocol has two participants, one on the left and one on the right. The protocol $A^\circ$ demands that the left participant send the right participant an instance of the resource $A$, and dually $A^\bullet$ demands that the right participant send the left participant an instance of $A$. The protocol $UW$ demands that the participants carry out the protocol $U$, and then carry out the protocol $W$. The protocol $\lambda$ is the empty protocol, which demands nothing and is finished immediately. For example, suppose that $A,B \in X$, that the left participant is called ``Alice'', and that the right participant is called ``Bob''. In this case, to carry out $A^\circ B^\bullet A^\bullet$, first Alice must send Bob an instance of $A$, then Bob must send Alice an instance of $B$, and then Bob must send Alice an instance of $A$.

\begin{definition}
  Let $\mathbb{A}$ be a strict monoidal category. The \emph{free cornering of $\mathbb{A}$}, written $\corner{\A}$, is the strict double category with a single object $\corner{\A}_0 = \{*\}$, with horizontal edge monoid $\corner{\A}_H = (\A_0,\otimes,I)$ given by the object monoid of $\A$, with vertical edge monoid given by $\corner{\A}_V = \ex{\A}_0$ given by the monoid of $\A_0$-valued exchanges (Definition~\ref{def:exchanges}), and with cells constructed according to the following rules:
  \begin{mathpar}
    \inferrule{f \in \A(A,B)}{\corner{f} : \floatcells{\lambda}{A}{B}{\lambda}}

    \inferrule{A \in \A_0}{{A\urcorner} : \floatcells{A^\circ}{I}{A}{\lambda}}

    \inferrule{A \in \A_0}{{A\llcorner} : \floatcells{\lambda}{A}{I}{A^\circ}}

    \inferrule{A \in \A_0}{{A\ulcorner} : \floatcells{\lambda}{I}{A}{A^\bullet}}

    \inferrule{A \in \A_0}{{A\lrcorner} : \floatcells{A^\bullet}{A}{I}{\lambda}}
    \\
    \inferrule{A \in \A_0}{1_A : \floatcells{\lambda}{A}{A}{\lambda}}

    \inferrule{a : \floatcells{U}{A}{B}{W} \\ b : \floatcells{U'}{B}{C}{W'}}{a \cdot b : \floatcells{UU'}{A}{C}{WW'}}
    
    \inferrule{U \in \ex{\A}_0}{id_U : \floatcells{U}{I}{I}{U}}
    
    \inferrule{a : \floatcells{U}{A}{B}{W} \\ b : \floatcells{W}{A'}{B'}{V}}{a \mid b : \floatcells{U}{A \otimes A'}{B \otimes B'}{V}}
  \end{mathpar}
  Cells are subject to a number of equations. First, equations concerning the cells $\corner{f}$:
  \begin{mathpar}
    \corner{f} \cdot \corner{g} = \corner{fg}

    \corner{f} \mid \corner{g} = \corner{f \otimes g}

    1_A = \corner{1_A}
  \end{mathpar}
  second, the \emph{yanking equations}:
  \begin{mathpar}
    {A\llcorner} \mid {A\urcorner} = 1_A

    {A\ulcorner} \mid {A\lrcorner} = 1_A

    {A\urcorner} \cdot {A\llcorner} = id_{A^\circ}
    
    {A\lrcorner} \cdot {A\ulcorner} = id_{A^\bullet}
  \end{mathpar}
  and finally equations ensuring that the axioms of a double category are satisfied:
  \begin{mathpar}
    1_A \cdot a = a = a \cdot 1_B
    
    (a \cdot b) \cdot c = a \cdot (b \cdot c)

    id_U \mid a = a = a \mid id_W

    (a \mid b) \mid c = a \mid (b \mid c)
    \\
    (a \mid b) \cdot (c \mid d) = (a \cdot c) \mid (b \cdot d)
    
    id_\lambda = 1_I

    id_U \cdot id_W = id_{UW}
  \end{mathpar}
  Note in particular that $1_{A \otimes B} = \corner{1_{A \otimes B}} = \corner{1_A \otimes 1_B} = \corner{1_A} \mid \corner{1_B} = 1_A \otimes 1_B$ already holds.
\end{definition}

If a morphism of $\A$ can be understood as a process that transforms the resources indicated by its domain into those indicated by its codomain, then a cell of $\corner{\A}$ can be understood as a process that transforms the resources indicated by its top boundary into those indicated by its bottom boundary by interacting with other such processes in the manner indicated by its left and right boundary. The cells $\corner{f}$ and the equations concerning them serve to include the processes represented by $\A$ in the new setting in a coherent manner. The \emph{corner cells} ${A\urcorner}$,${A\llcorner}$,${A\ulcorner}$, and ${A\lrcorner}$ allow interacting processes to exchange resources. It is helpful to think of the corner cells in terms of string diagrams, as in:
\begin{mathpar}
  A\urcorner
  \hspace{0.3cm}
  \leftrightsquigarrow
  \hspace{0.3cm}
  \includegraphics[height=0.77368cm, align=c]{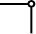}
  
  A\llcorner
  \hspace{0.3cm}
  \leftrightsquigarrow
  \hspace{0.3cm}
  \includegraphics[height=0.77368cm, align=c]{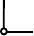}
  
  A\ulcorner
  \hspace{0.3cm}
  \leftrightsquigarrow
  \hspace{0.3cm}
  \includegraphics[height=0.77368cm, align=c]{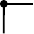}
  
  A\lrcorner
  \hspace{0.3cm}
  \leftrightsquigarrow
  \hspace{0.3cm}
  \includegraphics[height=0.77368cm, align=c]{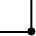}
\end{mathpar}
For example $A\llcorner$ is the process that transforms an instance of $A$ into nothing by giving it away along the right boundary, and $A\urcorner$ is the process that transforms nothing into an instance of $A$ by receiving that instance of $A$ along the left boundary. The yanking equations tell us that being exchanged in this manner has no effect on the resources involved. The name ``yanking equations'' comes from their string-diagrammatic representation, as in:
\begin{mathpar}
  \includegraphics[height=1.2cm,align=c]{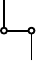}
  \hspace{0.25cm}
  \equiv
  \hspace{0.25cm}
  \includegraphics[height=1.2cm,align=c]{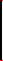}
  
  \includegraphics[height=1.2cm,align=c]{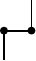}
  \hspace{0.25cm}
  \equiv
  \hspace{0.25cm}
  \includegraphics[height=1.2cm,align=c]{c-2-5.pdf}

  \includegraphics[height=0.694734cm,align=c]{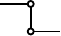}
  \hspace{0.25cm}
  \equiv
  \hspace{0.25cm}
  \includegraphics[width=1.2cm,align=c]{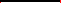}
    
  \includegraphics[height=0.694734cm,align=c]{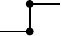}
  \hspace{0.25cm}
  \equiv
  \hspace{0.25cm}
  \includegraphics[width=1.2cm,align=c]{c-2-6.pdf}
\end{mathpar}

For example, our procedure for baking bread (\ref{diagram:baking}) can be decomposed in the free cornering of the ambient monoidal category into the two cells pictured below left. Composing these cells horizontally recovers the image under $\corner{-}$ of the original morphism, pictured below on the right. 
\begin{mathpar}
  \includegraphics[height=2.284cm,align=c]{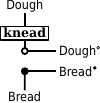} 

  \includegraphics[height=2.284cm,align=c]{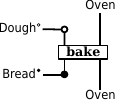} 

  \includegraphics[height=2.82cm,align=c]{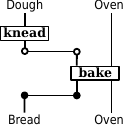} 
  \hspace{0.3cm}
  =
  \hspace{0.3cm}
  \includegraphics[height=2.82cm,align=c]{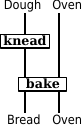} 
\end{mathpar}

\section{The Term Calculus}\label{sec:term-calculus}
In this section we introduce the terms and rewrite relation that make up our calculus. Our development takes place with respect to a fixed multicategory $\mathcal{M}$, whose morphisms we understand as (non-interactive) processes. The purpose of the calculus is to allow such processes to interact by exchanging resources along a left and right boundary, much as in the free cornering of a monoidal category. The major difference between the calculus presented here and the free cornering is that our calculus is \emph{directed}, with the relationship between terms given by a confluent and terminating rewrite relation. The free cornering is \emph{undirected}, with the relationship between terms given by equations. Put another way, while the free cornering is a \emph{static} model of process interaction, the term calculus presented here is a \emph{dynamic} model of process interaction.

\subsection{Sequent Calculus for Multicategories}\label{subsec:mult-sequent}
Our terms will be presented as a sequent calculus, and in light of the fact that the construction is parameterised by our fixed multicategory $\mathcal{M}$ it will be convenient to use the sequent calculus notation for multicategories, which we recall now. Let $\Sigma$ be a (multi-sorted) signature in which operation symbols are typed as in $f : A_1\seq A_n \vdash B$ where $A_1 \seq A_n,B$ are generating sorts. Then a \emph{term} over $\Sigma$ is a sequent that is derivable via the following inference rules:
\begin{mathpar}
  \inferrule*[right=var]{\text{ }}{x:A \vdash x:A}

  \inferrule*[right=op]{(\Gamma_i \vdash t_i : A_i)_{i =1}^{n} \\ (f : A_1 \seq A_n \vdash B) \in \Sigma}{\Gamma_1 \seq \Gamma_n \vdash f(t_1 \seq t_n) : B}
\end{mathpar}

Crucially, for a sequent $\Gamma \vdash t : B$ to be considered well-formed, the context $\Gamma$ must not contain any repeated variables. This means that, for example, when we write $\Gamma_1 \seq \Gamma_n$ it is implied that the variables in the $\Gamma_i$ are disjoint. Terms over $\Sigma$ form a multicategory, with identity morphisms given by the \textsc{var} rule and with composition given by substitution. More precisely, the composition operation is given by the following admissible inference rule:
\begin{mathpar}
  \inferrule*[right=comp]{(\Gamma_i \vdash t_i : A_i)_{i =1}^n \\ x_1 {:} A_1 \seq x_n {:} A_n \vdash t : B}{\Gamma_1\seq \Gamma_n \vdash t[t_1 \seq t_n/x_1 \seq x_n] : B}
\end{mathpar}
That this satisfies the equations of a multicategory follows from certain elementary properties of substitution. For example, for any $\Gamma \vdash t : B$ the right-unitality law ($1_B \circ f = f$) holds as in:
\[
  (\Gamma \vdash x[t/x] : B) = (\Gamma \vdash t : B)
\]

The multicategory of terms over a signature is in fact the \emph{free} multicategory over that signature, in the sense that this construction gives the left adjoint of an adjunction between a category of signatures and the category of multicategories. The right adjoint maps a multicategory $\mathcal{M}$ to the $\mathcal{M}_0$-sorted signature $\Sigma_\mathcal{M}$ with an operation symbol $f : A_1,\ldots,A_n \vdash B$ for each $f \in \mathcal{M}(A_1,\ldots,A_n;B)$. The counit of the adjunction gives a morphism from the multicategory of terms over this signature into $\mathcal{M}$, and quotienting the terms by the equations that hold in the image of this morphism yields a sequent calculus presentation of $\mathcal{M}$. To understand the effect of these extra equations, suppose $f \in \mathcal{M}(A,A;B)$, $g \in \mathcal{M}(C;A)$ and consider the following derivations:
{\small \begin{mathpar}
  \inferrule*[right=op]
  {
    \inferrule*[right=var]
    {\text{ }}
    {x_1 : C \vdash x_1 : C}
    \\
    \inferrule*[right=var]
    {\text{ }}
    {x_2 : C \vdash x_2 : C}
    \\
    ((f \circ  (g,g)) : C,C \vdash B) \in \Sigma_\mathcal{M}
  }
  {x_1 {:} C,x_2 {:} C \vdash (f \circ (g,g))(x_1,x_2) : B}

  \inferrule*[right=op]
  {
    \inferrule*[right=op]
    {
      \inferrule*
      {\text{ }}
      {x_1 : C \vdash x_1 : C}
      \\
      (g : C \vdash A) \in \Sigma_\mathcal{M}
    }
    {x_1 : C \vdash g(x_1) : A}
    \\
    \inferrule*[right=op]
    {
      \inferrule*
      {\text{ }}
      {x_2 : C \vdash x_2 : C}
      \\
      (g : C \vdash A) \in \Sigma_\mathcal{M}
    }
    {x_2 : C \vdash g(x_2) : A}
    \\
    (f : A,A \vdash B) \in \Sigma_\mathcal{M}
  }
  {x_1 {:} C,x_2 {:} C \vdash f(g(x_1),g(x_2)) : B}
\end{mathpar}}
These both represent the same morphism of $\mathcal{M}$ in the sense that they are equal in the image of the counit, but are not equal in the free multicategory over the underlying signature $\Sigma_{\mathcal{M}}$ of $\mathcal{M}$. To obtain a sequent calculus presentation of $\mathcal{M}$, we must reconcile these two ways of representing composition (and identities) in $\mathcal{M}$, which is achieved by quotienting our derivations in the manner described above.

It follows that we may reason about morphisms in any multicategory by means of sequent calculus, interpreting composition as substitution and identities as variables. Going forward, we will write $\Gamma \vdash v : A$ to indicate a morphism of our fixed multicategory $\mathcal{M}$. For a more detailed account of the connection between multicategories and sequent calculus see e.g.,~\cite{Shulman2016}.

\subsection{Terms}
We are now ready to introduce the terms of our calculus:
\begin{definition}\label{def:terms}
  The collection of \emph{terms over $\mathcal{M}$}, written $T(\mathcal{M})$, is constructed according to the inference rules of Figure~\ref{fig:terms}. It is important to note that we retain the convention that contexts $\Gamma$ must not contain repeated variables.

  \begin{figure}\begin{mdframed}
  \begin{center}
    \begin{mathpar}
      \inferrule*[right=seq]{\Gamma \vdash v : A}{\Gamma \Vdash [v] : (I,A,I)}

      \inferrule*[right=let]{\left(\Gamma_i \Vdash t_i : (U_{i-1},A_i,U_i)\right)_{i = 1}^n \\ x_1 {:} A_1 \seq x_n {:} A_n \Vdash t : (V,B,W)}{\Gamma_1 \seq \Gamma_n \Vdash \textsf{let } x_1 \seq x_n \asn (t_1 \mid \cdots \mid t_n) \textsf{ in } t : (U_0V,B,U_n W)}

      \inferrule*[right=putr]{\Delta \Vdash t : (U,B,W) \\ \Gamma \vdash v : A}{\Delta,\Gamma \Vdash \putr(v,t) : (U,B,A^\circ W)}

      \inferrule*[right=getl]{x:A,\Gamma \Vdash t : (U,B,W)}{\Gamma \Vdash \getl(x.t) : (A^\circ U,B,W)}
      \\
      \inferrule*[right=getr]{\Gamma,x:A \Vdash t : (U,B,W)}{\Gamma \Vdash \getr(x.t) : (U,B,A^\bullet W)}

      \inferrule*[right=putl]{\Gamma \vdash v : A \\ \Delta \Vdash t : (U,B,W)}{\Gamma,\Delta \Vdash \putl(v,t) : (A^\bullet U,B,W)}
    \end{mathpar}
  \end{center}
\end{mdframed}
\caption{Term formation rules for $T(\mathcal{M})$.}\label{fig:terms}
\end{figure}
\end{definition}

In what follows terms $t$ are implied to be part of the ultimate conclusion $\Gamma \Vdash t : (U,B,W)$ of some derivation. We will often write the term $t$ to stand for its derivation, and will often speak of the two interchangeably. This is, in some sense, the point of terms: to be a sort of shorthand for the accompanying derivation. For example, while our rewrite relation is specified on terms, it should be understood as rewriting the associated derivations. We introduce a bit of useful terminology:
\begin{definition}
  We say that a term is:
  \begin{itemize}
  \item \emph{neutral} if it is of the form $[v]$.
  \item \emph{left-facing} if it is of the form $\putl(v,t)$ or $\getl(x.t)$.
  \item \emph{right-facing} if it is of the form $\putr(v,t)$ of $\getr(x.t)$.
  \item a \emph{let-binding} if it is of the form $\textsf{let } x_1 \seq x_n \asn (t_1 \mid \cdots \mid t_n) \textsf{ in } t$.
  \end{itemize}
\end{definition}

If morphisms of $\mathcal{M}$ admit interpretation as resource-transforming processes, then terms of $T(\mathcal{M})$ can be understood as \emph{interacting} resource-transforming processes, much as in the free cornering of a monoidal category. Neutral terms $[v]$ perform no interaction, and represent the same process that $v$ does in $\mathcal{M}$. The term $\putr(v,t)$ represents the process in which we perform $v$ on some inputs, send the result out along the right boundary, and proceed as in $t$. The term $\getr(x.t)$ represents the process in which we wait for some input along the right boundary, and proceed as in $t$ once we have it. The left-facing terms are interpreted similarly. Finally, the term $\textsf{let } x_1 \seq x_n \asn (t_1 \mid \cdots \mid t_n) \textsf{ in } t'$ is the process in which we perform the processes indicated by the $t_i$ and, once they are all finished, use the resulting values as the input to the process $t'$. Crucially, the processes indicated by the $t_i$ may interact as they are performed by exchanging resources along their left and right boundaries via the $\textsf{get}$ and $\textsf{put}$ constructors.

This is similar to the way in which cells of the free cornering can be understood as interacting processes, raising the question of how the two formalisms relate. We answer this question by giving an interpretation $\llbracket - \rrbracket$ of terms in $T(\mathcal{M})$ as cells of  the free cornering of the free monoidal category $F(\mathcal{M})$ on $\mathcal{M}$. The object monoid of $F(\mathcal{M})$ is the free monoid on $\mathcal{M}_0$, and a morphism of $F(\mathcal{M})(\Gamma_1 \seq \Gamma_n;A_1\seq A_n)$ is a sequence $(f_1\seq f_n)$ of morphisms of $\mathcal{M}$ where $f_i \in \mathcal{M}(\Gamma_i;A_i)$ for each $i \in \{1 \seq n\}$. Composition is defined in terms of composition in $\mathcal{M}$, which works because there is at most one way to compose suitably typed sequences $(f_1\seq f_n)$ and $(g_1 \seq g_m)$. Identities are given by sequences of identity morphisms, and the monoidal structure is given by concatenation of sequences with the empty sequence serving as the unit. For more details on the adjunction relating multicategories and strict monoidal categories see e.g.,~\cite{Leinster2004}.

Specifically, we define:
\begin{definition}\label{def:interpretation}
Let $x_1 : A_1 \seq x_n : A_n \Vdash t : (U,B,W)$ be a term of $T(\mathcal{M})$. Then the cell $\llbracket t \rrbracket : \cells{U}{A_1,\ldots,A_n}{B}{W}$ of $\corner{F(\mathcal{M})}$ is defined as in: 
\[
  \llbracket t \rrbracket =
  \begin{cases}
    \corner{(v)} &\text{if } t = [v]\\
    (\llbracket t_1 \rrbracket \mid \cdots \mid \llbracket t_m \rrbracket) \cdot \llbracket t' \rrbracket &\text{if } t = \textsf{let } y_1,\ldots,y_m \asn (t_1 \mid \cdots \mid t_m) \textsf{ in } t'\\
    ((\corner{(v)} \cdot A\lrcorner) \mid 1_\Delta) \cdot \llbracket t' \rrbracket &\text{if } t = \putl(v,t')\\
    (A\urcorner \mid 1_\Gamma) \cdot \llbracket t' \rrbracket &\text{if } t = \getl(x.t')\\
    (1_\Gamma \mid A\ulcorner) \cdot \llbracket t' \rrbracket &\text{if  } t = \getr(x.t')\\
    (1_\Delta  \mid (\corner{(v)} \cdot A\llcorner)) \cdot \llbracket t' \rrbracket  &\text{if } t = \putr(v,t')
  \end{cases}
\]
This can, alternatively, be expressed in terms of string diagrams as in Figure~\ref{fig:interpretation}. We find these diagrams to be a helpful tool in developing one's intuition about the term calculus presented herein.

\begin{figure}
  \begin{mdframed}
    \begin{mathpar}
      \llbracket [v] \rrbracket
      \hspace{0.3cm}\leftrightsquigarrow\hspace{0.3cm}
      \includegraphics[height=1.2cm,align=c]{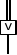} 

      \llbracket \textsf{let } x_1,\ldots,x_n \asn (t_1\mid \cdots \mid t_n) \textsf{ in } t' \rrbracket 
      \hspace{0.3cm}\leftrightsquigarrow\hspace{0.3cm}
      \includegraphics[height=1.2cm,align=c]{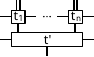} 
      \\
      \llbracket \putr(v,t') \rrbracket
      \hspace{0.3cm}\leftrightsquigarrow\hspace{0.3cm}
      \includegraphics[height=1.5cm,align=c]{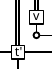} 

      \llbracket \getl(x.t') \rrbracket
      \hspace{0.3cm}\leftrightsquigarrow\hspace{0.3cm}
      \includegraphics[height=1.2cm,align=c]{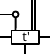} 
      \\
      \llbracket \getr(x.t') \rrbracket
      \hspace{0.3cm}\leftrightsquigarrow\hspace{0.3cm}
      \includegraphics[height=1.2cm,align=c]{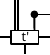} 

      \llbracket \putl(v,t') \rrbracket
      \hspace{0.3cm}\leftrightsquigarrow\hspace{0.3cm}
      \includegraphics[height=1.5cm,align=c]{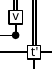} 
    \end{mathpar}
   \end{mdframed}
    \caption{Interpretation of terms in $T(\mathcal{M})$ as cells of $\corner{F(\mathcal{M})}$.}\label{fig:interpretation}
\end{figure}
\end{definition}

Next, we define a substitution operation, called \emph{value substitution}, on terms of $T(\mathcal{M})$. Value substitution is ultimately resolved in terms of composition in $\mathcal{M}$, via the admissible \textsc{comp} rule in the associated sequent calculus (Section~\ref{subsec:mult-sequent}). It is convenient to define value substitution in by manipulating derivations explicitly, from which perspective we are introducing an admissible inference rule in the sequent calculus presenting $T(\mathcal{M})$. The admissible rule is:
\begin{mathpar}
  \inferrule*[right=vsub]{(\Gamma_i \vdash v_i : A_i)_{i=1}^{n}  \\ x_1  {:} A_1 \seq x_n {:} A_n \Vdash t : (U,B,W)}{\Gamma_1 \seq \Gamma_n \Vdash t[v_1,\ldots,v_n/x_1,\ldots,x_n] : (U,B,W)}
\end{mathpar}
To show that \textsc{vsub} is admissible is equivalently to define the operation of value substitution. We do so by induction on the form of $t$. The cases are as follows: If $t = [u]$, we define:
\begin{mathpar}
  \inferrule*[right=vsub]
  {
    (\Gamma_i \vdash v_i : A_i )_{i = 1}^n
    \\
    \inferrule*[right=seq]
    {x_1 {:} A_1 \seq x_n {:} A_n \vdash  u : B }
    {x_1 {:} A_1 \seq x_n {:} A_n \Vdash [u] : (I,B,I)}
  }
  {\Gamma_1 \seq \Gamma_n \Vdash [u][v_1 \seq v_n/x_1 \seq x_n] : (I,B,I)}
  \\
  = 
  \\
  \inferrule*[right=seq]
  {
    \inferrule*[right=comp]
    {(\Gamma_i \vdash v_i : A_i)_{i = 1}^n \\ x_1 {:} A_1 \seq x_n {:} A_n \vdash u : B}
    {\Gamma_1 \seq \Gamma_n \vdash u[v_1 \seq v_n/x_1 \seq x_n] : B}
  }
  {\Gamma_1 \seq \Gamma_n \Vdash [u[v_1 \seq v_n/x_1 \seq x_n]] : (I,B,I)}
\end{mathpar}
If $t = \textsf{let } y_1 \seq y_m \asn (t_1\mid \cdots \mid t_m) \textsf{ in } r$, we define:
{\small
\begin{mathpar}
  \inferrule*[right=vsub]
  {
    ((\Gamma_j^i \vdash v_j^i : A_j^i)_{i=1}^{k_j})_{j=1}^m
    \\
    \inferrule*[right=let]
    {
      (x_j^1{:}A_j^1 \seq x_j^{k_j}{:}A_j^{k_j} \Vdash t_j : (U_{j-1},B_j,U_j))_{j=1}^m
      \\
      y_1 {:} B_1 \seq y_m {:} B_m \Vdash r : (U,B,W)
    }
    {x_1^1 {:} B_1^1 \seq x_m^{k_m} {:} B_m^{k_m} \Vdash \textsf{let } y_1 \seq y_m \asn (t_1 \mid \cdots \mid t_m) \textsf{ in } r : (U_0U,B,U_mW)}
  }
  {\Gamma_1^1 \seq \Gamma_m^{k_m} \Vdash (\textsf{let } y_1 \seq y_m \asn (t_1 \mid \cdots \mid t_m) \textsf{ in } r)[v_1^1 \seq v_m^{k_m}/x_1^1 \seq x_m^{k_m}] : (U_0U,B,U_mW)}
  \\
  =
  \\
  \inferrule*[right=let]
  {
    \left(
        \inferrule
        {
          (\Gamma_j^i \vdash v_j^i)_{i=1}^{k_j}
          \\
          x_j^1{:}A_j^1 \seq x_j^{k_j} {:} A_j^{k_j} \Vdash t_j : (U_{j-1},B_j,U_j)
        }
        {\Gamma_j^1 \seq \Gamma_j^{k_j} \Vdash t_j[v_j^1 \seq v_j^{k_j} / x_j^1 \seq x_j^{k_j}] : (U_{j-1},B_j,U_j)} 
        {\tiny \text{VSUB} \tiny} 
  \right)_{j=1}^m
  \\
    y_1{:}B_1 \seq y_m{:}B_m \Vdash r : (U,B,W)
  }
  {\Gamma_1^1 \seq \Gamma_m^{k_m} \Vdash \textsf{let } y_1 \seq y_m \asn (t_1[v_1^1 \seq v_1^{k_1}/x_1^1 \seq x_1^{k_1}] \mid \cdots \mid t_m[v_m^1 \seq v_m^{k_m} / x_m^1 \seq x_m^{k_m}]) \textsf{ in } r : (U_0U,B,U_mW)}
\end{mathpar}}

If $t = \putr(v,s)$ we define:
{\small \begin{mathpar}
    \inferrule*[right=vsub]
    {
      (\Gamma_i \vdash v_i : B_i)_{i = 1}^n \\ (\Delta_j \vdash u_j : A_j)_{j=1}^m \\
      \inferrule*[right=putr]
      {x_1 {:} B_1 \seq x_n {:} B_n \Vdash s : (U,B,W) \\ y_1 {:} A_1 \seq y_m {:} A_m \vdash v : A}
      {x_1 {:} B_1 \seq x_n {:} B_n,y_1 {:} A_1 \seq y_m {:} A_m \Vdash \putr(v,s) : (U,B,A^\circ W)}
      }
      {\Gamma_1 \seq \Gamma_n,\Delta_1,\ldots,\Delta_m \Vdash \putr(v,s)[v_1 \seq v_n,u_1 \seq u_m/x_1 \seq x_n,y_1 \seq y_m] : (U,B,A^\circ W) }
      \\
      =
      \\
      \inferrule*[right=putr]
      {
        \inferrule*[right=vsub]
        {(\Gamma_i \vdash v_i : B_i)_{i = 1}^n \\ x_1 {:} B_1 \seq x_n {:} B_n \Vdash t : (U,B,W)}
        {\Gamma_1 \seq \Gamma_n \Vdash t[v_1 \seq v_n/x_1 \seq x_n] : (U,B,W)}
        \\
        \inferrule*[right=comp]
        {(\Delta_j \vdash u_j : A_j)_{j=1}^m \\ y_1 {:} A_1 \seq y_m {:} A_m \vdash v : A}
        {\Delta_1 \seq \Delta_m \vdash v[u_1\seq u_m / y_1 \seq y_m] : A}
      }
      {\Gamma_1 \seq \Gamma_n,\Delta_1 \seq \Delta_m \Vdash \putr(v[u_1 \seq u_n/y_1 \seq y_n],s[v_1 \seq v_m / x_1 \seq x_m]) : (U,B,A^\circ W)}
    \end{mathpar}}

  The case for $t = \putl(v,s)$ is similar. If $t = \getr(x.s)$ we define:
  \begin{mathpar}
  \inferrule*[right=vsub]
    {
      (\Gamma_i \vdash v_i : A_i)_{i = 1}^n
      \\
      \inferrule*[right=getr]
      {x_1 {:} A_1 \seq x_n {:} A_n,x {:} A \Vdash s : (U,B,W)}
      {x_1 {:} A_1 \seq x_n {:} A_n \Vdash \getr(x.s) : (U,B,A^\bullet W)}
    }
    {\Gamma_1 \seq \Gamma_n \Vdash \getr(x.s)[v_1 \seq v_n / x_1 \seq x_n] : (U,B,A^\bullet W)}
    \\
    =
    \\
    \inferrule*[right=getr]
    {
      \inferrule*[right=vsub]
      {
        (\Gamma_i \vdash v_i : A_i)_{i = 1}^n
        \\
        \inferrule*[right=id]{\text{ }}{x : A \vdash x : A}
        \\
        x_1 {:} A_1 \seq x_n {:} A_n,x {:} A \Vdash s : (U,B,W)
      }
      {\Gamma_1 \seq \Gamma_n,x {:} A \Vdash s[v_1 \seq v_n,x/x_1 \seq x_n,x] : (U,B,W)}
    }
    {\Gamma_1 \seq \Gamma_n \Vdash \getr(x.s[v_1 \seq v_n,x/x_1 \seq x_n,x]) : (U,B,A^\bullet W)}
  \end{mathpar}
  The case for $t = \getl(x.s)$ is similar, and it follows that $\textsc{vsub}$ is admissible.

  We will sometimes omit variables that are to be substituted for themselves from the substitution list, so that for example $t[x_1\seq v \seq x_n / x_1 \seq x \seq x_n] = t[v/x]$ and $t[v_1 \seq v_n,x / x_1 \seq x_n,x] = t[v_1\seq v_n/x_1 \seq x_n]$. We may also abbreviate lists as in $\overline{v} = v_1\seq v_n$ when the meaning of $\overline{v}$ is clear in context, so that for example we may write $t[\overline{v}/\overline{x}] = t[v_1\seq v_n/x_1 \seq x_n]$. Then a shorter definition of value substitution is:
  \[
    t[\overline{v}/\overline{x}] =
    \begin{cases}
      [u[\overline{v}/\overline{x}]] &\text{if } t = [u]\\
      \putr(u[\overline{v_2}/\overline{x_2}],s[\overline{v_1}/\overline{x_1}]) &\text{if } t = \putr(u,s) \text{ and } [\overline{v}/\overline{x}] = [\overline{v_1},\overline{v_2}/\overline{x_1},\overline{x_2}]\\
      \getl(y.s[\overline{v}/\overline{x}]) &\text{if } t = \getl(y.s)\\
      \putl(u[\overline{v_1}/\overline{x_1}],s[\overline{v_2}/\overline{x_2}]) &\text{if } t = \putr(u,x) \text{ and } [\overline{v}/\overline{x}] = [\overline{v_1},\overline{v_2}/\overline{x_1},\overline{x_2}]\\
      \getr(y.s[\overline{v}/\overline{x}]) &\text{if } t = \getr(y.s)\\
      \textsf{let } y_1 \seq y_m \asn (t_1[\overline{v_1}/\overline{x_1}] \mid \cdots \mid t_n[\overline{v_n}/\overline{x_n}]) \textsf{ in } s &\text{if } t = \textsf{let } y_1 \seq y_m \asn (t_1 \mid \cdots \mid t_n) \textsf{ in } s \\&\text{and } [\overline{v} / \overline{x}] = [\overline{v_1} \seq \overline{v_n}/\overline{x_1}\seq \overline{x_n}]\\
    \end{cases}
  \]
  There will only ever be one well-defined way to split up the variables $\overline{x}$ in the cases that require it, as can be seen in the more explicit definition of value substitution in terms of derivations.

\subsection{Rewrite Rules}
  We proceed to give a term rewrite relation on $T(\mathcal{M})$.
\begin{definition}\label{def:rewrites}
  Let $\to$ be the least congruence on terms of $T(\mathcal{M})$ generated by the rewrites of Figure~\ref{fig:rewrites}. 

  \begin{figure}\begin{mdframed}
      \begin{itemize}
      \item[] \textbf{[R0]} $\textsf{let } x_1 \seq x_n \asn ([v_1] \mid \cdots \mid [v_n]) \textsf{ in } t \,\to\, t[v_1 \seq v_n/x_1 \seq x_n]$
      \item[] \textbf{[R1]} $\textsf{let } x_1 \seq x_n \asn (\overline{a} \mid \putr(v,t) \mid \getl(y.s) \mid \overline{b}) \textsf{ in } r \,\to\, \textsf{let } x_1 \seq x_n \asn (\overline{a} \mid t \mid s[v/y] \mid \overline{b}) \textsf{ in } r$
      \item[] \textbf{[R2]} $\textsf{let } x_1 \seq x_n \asn (\overline{a} \mid \getr(y.t) \mid \putl(v,s) \mid \overline{b}) \textsf{ in } r \,\to\, \textsf{let } x_1 \seq x_n \asn (\overline{a} \mid t[v/y] \mid s \mid \overline{b}) \textsf{ in } r$
      \item[] \textbf{[R3]} $\textsf{let } x_1 \seq x_n \asn (\putl(v,t) \mid \overline{a}) \textsf{ in } r \,\to\, \putl(v,\textsf{let } x_1 \seq x_n \asn (t \mid \overline{a}) \textsf{ in } r)$
      \item[] \textbf{[R4]} $\textsf{let } x_1 \seq x_n \asn (\getl(y.t) \mid \overline{a}) \textsf{ in } r \,\to\, \getl(y.\textsf{let } x_1 \seq x_n \asn (t \mid \overline{a}) \textsf{ in } r)$
      \item[] \textbf{[R5]} $\textsf{let } x_1 \seq x_n \asn (\overline{a} \mid \putr(v,t)) \textsf{ in } r \,\to\, \putr(v,\textsf{let } x_1 \seq x_n \asn (\overline{a} \mid t) \textsf{ in } r)$
      \item[] \textbf{[R6]} $\textsf{let } x_1 \seq x_n \asn (\overline{a} \mid \getr(y.t)) \textsf{ in } r \,\to\, \getr(y.\textsf{let } x_1 \seq x_n \asn (\overline{a} \mid t) \textsf{ in } r)$
      \item[] \textbf{[R7]} $\putr(v,\putl(w,t)) \,\to\, \putl(w,\putr(v,t))$
      \item[] \textbf{[R8]} $\putr(v,\getl(x.t)) \,\to\, \getl(x.\putr(v,t))$ when $x$ does not occur in $v$
      \item[] \textbf{[R9]} $\getr(x.\putl(v,t)) \,\to\, \putl(v,\getr(x.t))$ when $x$ does not occur in $v$
      \item[] \textbf{[R10]} $\getr(x.\getl(y.t)) \,\to\, \getl(y.\getl(x.t))$
      \end{itemize}
  \end{mdframed}\caption{Generating rewrites for $\to$.}\label{fig:rewrites}\end{figure}
\end{definition}

Adopting the usual notation for term rewriting relations, we write $\stackrel{*}{\to}$ for the reflexive transitive closure of $\to$, write $\stackrel{*}{\leftrightarrow}$ for the symmetric closure of $\stackrel{*}{\to}$, and write $\from$ and $\stackrel{*}{\from}$ for the converse of $\to$ and reflexive transitive closure of $\from$, respectively. Intuitively, rule \textbf{[R0]} says that when all of the inputs to a let-binding are neutral we may substitute them into the term in which they are bound via our value substitution operation \textsc{vsub}. Rules \textbf{[R1]} and \textbf{[R2]} say that when matching left-facing and right-facing terms occur next to each other in a let-binding they may interact, and it is through these two rules that processes interact in our calculus. Rules \textbf{[R3-R6]} allow ``outward-facing'' gets and puts in the input sequence of a let-binding to be moved ``above'' the let-binding, and rules \textbf{[R7-R10]} allow us to commute left-facing gets and puts past right-facing ones. It may be helpful to conceptually separate \textbf{[R0-R6]}, which perform useful computational work, from \textbf{[R7-R10]}, which serve mainly to ensure that $\to$ is confluent. We note that the restriction on the use of \textbf{[R8]} and \textbf{[R9]} is not strictly necessary: if the reduct is well-formed then typing constraints ensure that $x$ cannot occur in $v$ anyway. 

Together, the terms of Definition~\ref{def:terms} and rewrites of Definition~\ref{def:rewrites} constitute a \emph{calculus}. We give a few simple examples demonstrating the way in which this calculus models process interaction:
\begin{example}
  Say $\mathcal{M}$ has objects including $\mathbf{Pants}$, $\mathbf{Shirt}$, $\mathbf{Clothes}$, $\mathbf{Pattern}$ and $\mathbf{Thread}$, and has morphisms including:
  \begin{mathpar}
    \mathbf{cut} \in \mathcal{M}(\mathbf{Fabric};\mathbf{Pattern})

    \mathbf{sew}\in \mathcal{M}(\mathbf{Pattern},\mathbf{Thread};\mathbf{Shirt})

    \mathbf{pack} \in \mathcal{M}(\mathbf{Pants},\mathbf{Shirt};\mathbf{Clothes})
  \end{mathpar}
  Then let $A$ be the derivation:
  \begin{mathpar}
    \inferrule*[right=putr]
    {
      \inferrule*[right=seq]
      {
        \inferrule*[right=var]
        {\text{ }}
        {p : \mathbf{Pants} \vdash p : \mathbf{Pants}}
      }
      {p : \mathbf{Pants} \Vdash [p] : (\lambda,\mathbf{Pants},\lambda)}
      \\
      \inferrule*[right=seq]
      {
        f : \mathbf{Fabric} \vdash \mathbf{cut}(f) : \mathbf{Pattern}
      }
      {f : \mathbf{Fabric} \Vdash [\mathbf{cut}(f)] : (\lambda,\mathbf{Pattern},\lambda)}}
    {p : \mathbf{Pants},f : \mathbf{Fabric} \Vdash \putr(\mathbf{cut}(f),[p]) : (\lambda,\mathbf{Pants},\mathbf{Pattern}^\circ)}
  \end{mathpar}
  and let $B$ be the derivation:
  \begin{mathpar}
    \inferrule*[right=getl]
    {
      \inferrule*[right=seq]
      {
        a : \mathbf{Pattern},t : \mathbf{Thread} \vdash \mathbf{sew}(a,t) : \mathbf{Shirt}
      }
      {a : \mathbf{Pattern}, t : \mathbf{Thread} \Vdash [\mathbf{sew}(a,t)] : (\lambda,\mathbf{Shirt},\lambda)}
    }
    {t : \mathbf{Thread} \Vdash \getl(a.[\mathbf{sew}(a,t)]) : (\mathbf{Pattern}^\circ,\mathbf{Shirt},\lambda)}
  \end{mathpar}
  Then we may derive:
  {\small
  \begin{mathpar}
    \inferrule*[right=let]
    {
      A
      \\
      B
      \\
      \inferrule*[right=seq]
      {x : \mathbf{Pants},y : \mathbf{Shirt} \vdash \mathbf{pack}(x,y) : \mathbf{Clothes}}
      {x : \mathbf{Pants},y : \mathbf{Shirt} \Vdash [\mathbf{pack}(x,y)] : (\lambda,\mathbf{Clothes},\lambda)}
    }
    {p : \mathbf{Pants},f : \mathbf{Fabric} , t : \mathbf{Thread} \Vdash \textsf{let } x,y \asn (\putr(\mathbf{cut}(f),[p]) \mid \getl(a.[\mathbf{sew}(a,t)])) \textsf{ in } [\mathbf{pack}(x,y)] : (\lambda,\mathbf{Clothes},\lambda)}
  \end{mathpar}}
Which rewrites to normal form as in:
\begin{align*}
  &\textsf{let } x,y \asn (\putr(\mathbf{cut}(f),[p]) \mid \getl(a.[\mathbf{sew}(a,t)])) \textsf{ in } [\mathbf{pack}(x,y)]
  \\&\to \textsf{let } x,y \asn ([p] \mid [\mathbf{sew}(a,t)][\mathbf{cut}(f)/a])  \textsf{ in } [\mathbf{pack}(x,y)]
  \\&=  \textsf{let } x,y \asn ([p] \mid [\mathbf{sew}(\mathbf{cut}(f),t)])  \textsf{ in } [\mathbf{pack}(x,y)]
  \\&\to [\mathbf{pack}(x,y)][p,\mathbf{sew}(\mathbf{cut}(f),t)/x,y]
  \\&= [\mathbf{pack}(p,\mathbf{sew}(\mathbf{cut}(f),t))]
\end{align*}
In this way, subterms of a message passing term may exchange resources during evaluation.
\end{example}

In the second example, we show how a get can be moved ``above'' a let.

\begin{example}
	Say $\mathcal{M}$ has objects including $\mathbf{Person}$, $\mathbf{Money}$, $\mathbf{Beans}$, $\mathbf{Water}$, $\mathbf{Coffee}$, and $\mathbf{Ready}$, and has morphisms including:
	\begin{mathpar}
		\mathbf{brew} \in \mathcal{M}(\mathbf{Money},\mathbf{Beans},\mathbf{Water};\mathbf{Coffee})
		
		\mathbf{drink} \in \mathcal{M}(\mathbf{Person},\mathbf{Coffee};\mathbf{Person})
		
		\mathbf{done} \in \mathcal{M}(() ;\mathbf{Ready})
	\end{mathpar}
	Let $A$ be the term modeling a person paying for a coffee at a coffee machine, and then drinking it:
	\[
	p : \mathbf{Person} , m : \mathbf{Money} \Vdash \putr(m,\getr(c.[\mathbf{drink}(p,c)])) : (\lambda,\mathbf{Person},\mathbf{Money}^\circ \mathbf{Coffee}^\bullet)
	\]
	and let $B$ be the term modeling the coffee machine, which gets water from the mains:
	\[
	b : \mathbf{Beans} \Vdash \getl(n,\getr(w.\putl(\mathbf{brew}(n,b,w), \mathbf{done}()))) : (\textbf{Money}^\circ \textbf{Coffee}^\bullet ,\mathbf{Ready},\mathbf{Water}^\bullet)
	\]
	Consider the term $\textsf{let } x,y \asn (A \mid B) \textsf{ in } C$ for some well-typed continuation term $C$. Then we have the following reductions:	
	\begin{align*}
		& \textsf{let } x,y \asn (A \mid B) \textsf{ in } C
		\\
		&= \textsf{let } x,y \asn (\putr(m,\getr(c.[\mathbf{drink}(p,c)])) \mid \getl(n,\getr(w.\putl(\mathbf{brew}(n,b,w), \mathbf{done}())))) \textsf{ in } C
		\\
		& \to \textsf{let } x,y \asn (\getr(c.[\mathbf{drink}(p,c)]) \mid \getr(w.\putl(\mathbf{brew}(m,b,w), \mathbf{done}()))) \textsf{ in } C
		\\
		& \to \getr(w.\textsf{let } x,y \asn (\getr(c.[\mathbf{drink}(p,c)]) \mid \putl(\mathbf{brew}(m,b,w), \mathbf{done}())) \textsf{ in } C)
		\\
		& \to \getr(w.\textsf{let } x,y \asn ([\mathbf{drink}(p,\mathbf{brew}(m,b,w))] \mid \mathbf{done}()) \textsf{ in } C)
		\\
		& \to \getr(w. C[\mathbf{drink}(p,\mathbf{brew}(m,b,w)), \mathbf{done}()/x,y])
	\end{align*}
	
\end{example}

We show that our interpretation $\llbracket - \rrbracket$ of terms (Definition~\ref{def:interpretation}) is coherent with respect to the convertibility relation $\stackrel{*}{\leftrightarrow}$ induced by $\to$. It is convenient to do this in two parts. First, we have:
\begin{lemma}\label{lem:substitution-coherent}
  $\llbracket \textsf{let } x_1 \seq x_n \asn ([v_1] \mid \cdots \mid [v_n]) \textsf{ in } t \rrbracket = \llbracket t[v_1 \seq v_n / x_1 \seq x_n]\rrbracket$ whenever this makes sense.
\end{lemma}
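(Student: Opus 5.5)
The left-hand side unfolds by Definition~\ref{def:interpretation} to $(\corner{(v_1)} \mid \cdots \mid \corner{(v_n)}) \cdot \llbracket t \rrbracket$. Since $\corner{f} \mid \corner{g} = \corner{f \otimes g}$ and tensor in $F(\mathcal{M})$ is concatenation of sequences, this is $\corner{(v_1 \seq v_n)} \cdot \llbracket t \rrbracket$. So it suffices to prove the stronger substitution statement: for every term $x_1{:}A_1 \seq x_n{:}A_n \Vdash t : (U,B,W)$ and values $\Gamma_i \vdash v_i : A_i$,
\[
  \corner{(v_1 \seq v_n)} \cdot \llbracket t \rrbracket = \llbracket t[v_1 \seq v_n/x_1 \seq x_n] \rrbracket .
\]
I will prove this by induction on the derivation of $t$, following the clauses defining value substitution. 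Throughout I use that $\corner{-}$ is functorial and monoidal, so $\corner{(\overline{v_1},\overline{v_2})} = \corner{(\overline{v_1})} \mid \corner{(\overline{v_2})}$ and $\corner{(v)} \cdot \corner{(u)} = \corner{(u \circ v)}$. I also use the interchange law $(a \mid b)\cdot(c \mid d) = (a\cdot c)\mid(b \cdot d)$ together with the unit laws for $1_A$ and $id_U$.

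The cases go as follows.
\begin{itemize}
\item \textbf{Neutral}, $t=[u]$: the claim is $\corner{(\overline{v})}\cdot\corner{(u)} = \corner{(u[\overline{v}/\overline{x}])}$. This holds because composition in $F(\mathcal{M})$ is composition in $\mathcal{M}$, which is substitution.
\item \textbf{Let-binding}: split $\corner{(\overline{v})} = \corner{(\overline{v_1})} \mid \cdots \mid \corner{(\overline{v_m})}$ along the contexts of $t_1 \seq t_m$. Since each $\corner{(\overline{v_j})}$ has trivial vertical boundary $\lambda$, interchange gives $(\corner{(\overline{v_1})}\cdot\llbracket t_1\rrbracket)\mid\cdots$. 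Then apply the induction hypothesis to each $t_j$ and use associativity of $\cdot$.
\item \textbf{$\putr(u,s)$}: write $\corner{(\overline{v})} = \corner{(\overline{v_1})} \mid \corner{(\overline{v_2})}$. Interchange with $1_\Delta \mid (\corner{(u)}\cdot A\llcorner)$ produces $(\corner{(\overline{v_1})} \mid (\corner{(u[\overline{v_2}/\overline{y}])}\cdot A\llcorner))$. This in turn is $(1 \mid (\corner{(u[\dots])}\cdot A\llcorner))\cdot(\corner{(\overline{v_1})} \mid id_{A^\circ})$, again by interchange and unit laws. The right factor then meets $\llbracket s\rrbracket$ and the induction hypothesis applies. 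The case $\putl$ is symmetric.
\item \textbf{$\getl(x.s)$ and $\getr(x.s)$}: slide $\corner{(\overline{v})}$ past $A\urcorner \mid 1_\Gamma$ to obtain $(A\urcorner \mid 1)\cdot(1_A \mid \corner{(\overline{v})})$. Note $1_A \mid \corner{(\overline{v})} = \corner{(1_A,\overline{v})}$, and the identity on $x$ is exactly what the definition of substitution under a binder uses. The induction hypothesis then closes the case.
\end{itemize}

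The main obstacle is bookkeeping rather than ideas. The ``sliding'' steps require matching boundaries exactly when applying interchange: for instance, writing $\corner{(\overline{v_1})}$ as $\corner{(\overline{v_1})}\mid id_\lambda$ or $\corner{(\overline{v_1})}\cdot 1$ so that both sides of the interchange law are well-typed. The corner cells carry nontrivial vertical boundaries, so in the $\putr$ and $\putl$ cases the cell $\corner{(\overline{v_2})}$ has to be absorbed into the value by functoriality, and the factorisation $a \mid b = (1\mid b)\cdot(a \mid 1)$ has to be justified using $id_U$. I would first write out the $\putr$ case carefully as a template; the other cases are variations of it. Finally, the original statement follows by specialising to the let-binding of neutral terms, which is the ``whenever this makes sense'' proviso.
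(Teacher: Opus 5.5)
Your proof is correct and follows the paper's own argument: structural induction on $t$ along the clauses of value substitution, unfolding $\llbracket - \rrbracket$ and sliding $\corner{(\overline{v})}$ through using interchange and functoriality of $\corner{-}$. Your restatement $\corner{(\overline{v})}\cdot\llbracket t\rrbracket = \llbracket t[\overline{v}/\overline{x}]\rrbracket$ is exactly what the paper uses implicitly (e.g.\ in the $\getr$ case it turns $\corner{(v_1,\ldots,v_n,1_A)}\cdot\llbracket t'\rrbracket$ back into a let-binding of neutrals to apply the hypothesis), and stating it directly is slightly cleaner; one bookkeeping slip: in the $\putr$ case the lower factor should be $\corner{(\overline{v_1})}\mid id_\lambda = \corner{(\overline{v_1})}$, not $\corner{(\overline{v_1})}\mid id_{A^\circ}$, which is ill-formed because $\corner{(\overline{v_1})}$ has right boundary $\lambda$, and the $A^\circ$ is already carried by $\corner{(u[\overline{v_2}/\overline{y}])}\cdot A\llcorner$ in the upper factor.
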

And then using Lemma~\ref{lem:substitution-coherent} it is straightforward to obtain:
\begin{lemma}\label{lem:interpretation-coherent}
  If $t \stackrel{*}{\leftrightarrow} t'$ then $\llbracket t \rrbracket = \llbracket t' \rrbracket$.
\end{lemma}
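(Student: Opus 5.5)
Since $\stackrel{*}{\leftrightarrow}$ is the reflexive, symmetric, transitive closure of $\to$, and equality of cells is an equivalence relation, it suffices to show that a single step $t \to t'$ implies $\llbracket t \rrbracket = \llbracket t' \rrbracket$. Because $\to$ is the least congruence generated by \textbf{[R0]}--\textbf{[R10]}, we argue by induction on the derivation of $t \to t'$. The congruence cases are immediate: $\llbracket - \rrbracket$ is defined compositionally, with the interpretation of each term former a fixed double-categorical expression in the interpretations of its immediate subterms. Replacing a subterm by one with equal interpretation therefore leaves the whole interpretation unchanged. So it remains to check each generating rewrite at the root.

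Rule \textbf{[R0]} is exactly Lemma~\ref{lem:substitution-coherent}. For \textbf{[R3]}--\textbf{[R6]} we unfold both sides and use interchange $(a\mid b)\cdot(c\mid d) = (a\cdot c)\mid(b\cdot d)$, associativity of $\cdot$, and the unit laws. For example, in \textbf{[R4]} the left side is
\[
(((A\urcorner \mid 1_\Gamma)\cdot \llbracket t \rrbracket) \mid \llbracket \overline{a}\rrbracket)\cdot\llbracket r\rrbracket .
\]
Writing $\llbracket\overline{a}\rrbracket = 1 \cdot \llbracket \overline{a}\rrbracket$, interchange turns this into
\[
(A\urcorner \mid 1_\Gamma \mid 1_{\Gamma'})\cdot(\llbracket t\rrbracket\mid\llbracket\overline a\rrbracket)\cdot\llbracket r\rrbracket ,
\]
which is the interpretation of the right side, using $1_{\Gamma}\mid 1_{\Gamma'} = 1_{\Gamma,\Gamma'}$. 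The put rules \textbf{[R3]} and \textbf{[R5]} are handled the same way, with $\corner{(v)}\cdot A\lrcorner$ in place of $A\urcorner$. Rules \textbf{[R7]}--\textbf{[R10]} are pure interchange: for instance both sides of \textbf{[R10]} reduce to $(A\urcorner \mid 1_\Gamma \mid B\ulcorner)\cdot\llbracket t\rrbracket$, after factoring the two corners through identities and applying $(a\mid 1)\cdot(1\mid b) = a\mid b = (1\mid b)\cdot(a\mid 1)$.

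The interaction rules \textbf{[R1]} and \textbf{[R2]} are where the yanking equations enter, and we expect them to be the main obstacle. Take \textbf{[R1]}. The relevant part of the left side is
\[
\bigl((1_\Delta\mid(\corner{(v)}\cdot A\llcorner))\cdot\llbracket t\rrbracket\bigr)\mid\bigl((A\urcorner\mid 1_{\Gamma})\cdot\llbracket s\rrbracket\bigr).
\]
By interchange this equals
\[
\bigl(1_\Delta\mid(\corner{(v)}\cdot A\llcorner)\mid A\urcorner\mid 1_{\Gamma}\bigr)\cdot(\llbracket t\rrbracket\mid\llbracket s\rrbracket).
\]
Some care is needed with the vertical boundaries here: the $A^\circ$ boundary is shared and absorbed by horizontal composition. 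Again by interchange, $(\corner{(v)}\cdot A\llcorner)\mid A\urcorner = (\corner{(v)}\mid 1_I)\cdot(A\llcorner\mid A\urcorner)$, and yanking gives $A\llcorner\mid A\urcorner = 1_A$, so this collapses to $\corner{(v)}$. We are left with
\[
(\llbracket t\rrbracket\mid((\corner{(v)}\mid 1_\Gamma)\cdot\llbracket s\rrbracket)).
\]

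To conclude \textbf{[R1]} we need an auxiliary fact: $(\corner{(v)}\mid 1_\Gamma)\cdot\llbracket s\rrbracket = \llbracket s[v/y]\rrbracket$. More generally, for any term $s$ we want
\[
(\corner{(v_1)}\mid\cdots\mid\corner{(v_n)})\cdot\llbracket s\rrbracket = \llbracket s[\overline v/\overline x]\rrbracket .
\]
This is essentially the content of Lemma~\ref{lem:substitution-coherent}: the left side there is $(\llbracket[v_1]\rrbracket\mid\cdots\mid\llbracket[v_n]\rrbracket)\cdot\llbracket t\rrbracket$. So we may invoke that lemma, with identity values $[y']$ in the positions of $\Gamma$, and use $\corner{(1)} = 1$ together with the multicategory identity law for substitution of variables. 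Rule \textbf{[R2]} is symmetric, using the other yanking equation $A\ulcorner\mid A\lrcorner = 1_A$. Throughout, the bookkeeping of boundary types (such as $U_0V$ versus $U_nW$, and the positions of $\Delta$ and $\Gamma$ in contexts) must be checked so that each interchange is well-typed.
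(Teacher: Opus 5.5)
Your proof is correct and follows the paper's route: reduce to the generating rewrites, dispatch \textbf{[R0]} with Lemma~\ref{lem:substitution-coherent}, and check the remaining rules using the double-category axioms, with yanking together with the substitution lemma (to identify $\llbracket s[v/y] \rrbracket$) handling \textbf{[R1]} and \textbf{[R2]}. The paper does these checks with string diagrams. Your interchange computations spell them out algebraically, and your explicit treatment of the congruence closure fills in a step the paper leaves implicit.
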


Lemma~\ref{lem:interpretation-coherent} will be important later on, when we show that $\llbracket - \rrbracket$ gives a functor of virtual double categories. In particular, our terms $T(\mathcal{M})$ will form a virtual double category modulo $\stackrel{*}{\leftrightarrow}$, and Lemma~\ref{lem:interpretation-coherent} is required for $\llbracket - \rrbracket$ to be a well-defined function from $\stackrel{*}{\leftrightarrow}$-equivalence classes to cells of the free cornering.

\subsection{Termination and Confluence}
We proceed to show that $\to$ is terminating and confluent. Termination is straightforward: we assign each term a size and show that each of our generating rewrites decreases this size. The size is given as in:
\begin{definition}
  We assign to each term $t$ a \emph{size} $\#(t) \in \mathbb{N}$ as follows:
  \[
    \#(t) = \begin{cases}
            2 & \text{if } t = [v]\\
            2 \cdot \#(t_1) \cdots \#(t_n) \cdot \#(s) & \text{if } t = \textsf{let } x_1,\ldots,x_n \asn (t_1\mid \cdots \mid t_n) \textsf{ in } s\\
              2 + (2 \cdot \#(s)) & \text{if } t = \putl(v,s) \textsf{ or } t = \getl(x.s)\\
              1 + (2 \cdot \#(s)) & \text{if } t = \putr(v,s) \textsf{ or } t = \getr(x.s)
          \end{cases}
  \]
\end{definition}
Next, we require an auxiliary lemma concerning value substitution:
\begin{lemma}\label{lem:subst-size}
  $\#(t) = \#(t[v_1\seq v_n / x_1 \seq x_n])$
\end{lemma}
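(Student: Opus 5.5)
We prove Lemma~\ref{lem:subst-size} by structural induction on the derivation of $t$, or equivalently on the form of $t$, following the case split used to define value substitution. The key observation is that $\#(-)$ never inspects the values $v$ occurring in a term, only its constructor structure. Value substitution likewise preserves this structure: it only rewrites the values inside $[-]$, $\putr$ and $\putl$ (via \textsc{comp}), and otherwise pushes substitution into immediate subterms. We prove the statement for all simultaneous substitutions $[\overline{v}/\overline{x}]$ at once, because the inductive cases for $\getl$ and $\getr$ substitute a variable for itself ($x/x$), and the let case splits the substitution list into sublists.

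\textbf{Base case.} If $t = [u]$, then $t[\overline{v}/\overline{x}] = [u[\overline{v}/\overline{x}]]$. This is again neutral, so both sides have size $2$.

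\textbf{Put cases.} If $t = \putr(u,s)$, the substitution splits as $[\overline{v_1},\overline{v_2}/\overline{x_1},\overline{x_2}]$, giving $\putr(u[\overline{v_2}/\overline{x_2}], s[\overline{v_1}/\overline{x_1}])$. Its size is $1 + 2\cdot\#(s[\overline{v_1}/\overline{x_1}])$, which equals $1 + 2\cdot\#(s)$ by the induction hypothesis applied to $s$. The case $\putl$ is identical, with $2$ in place of $1$.

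\textbf{Get cases.} If $t = \getr(y.s)$, then $t[\overline{v}/\overline{x}] = \getr(y.s[\overline{v},y/\overline{x},y])$. The induction hypothesis applied to $s$ with this extended substitution gives the claim; $\getl$ is symmetric.

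\textbf{Let case.} If $t = \textsf{let } \overline{y} \asn (t_1 \mid \cdots \mid t_m) \textsf{ in } r$, the substitution distributes into the $t_j$ (with sublists $\overline{v_j}/\overline{x_j}$) and leaves $r$ untouched. The size is then $2\cdot \prod_j \#(t_j[\overline{v_j}/\overline{x_j}]) \cdot \#(r)$, which equals $\#(t)$ by the induction hypothesis applied to each $t_j$.

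No step is genuinely difficult. The only point needing care is making the induction hypothesis general enough: it must range over arbitrary simultaneous substitutions, including identity components, and over arbitrary splittings of the context. This is ensured by inducting on the derivation of $t$ with the substitution universally quantified. One should also note that the size function is well defined on derivations, so the identification of a term with its derivation causes no issue.
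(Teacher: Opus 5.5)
Your proposal is correct and follows essentially the same route as the paper: structural induction on $t$, with one case per constructor, using that $\#$ ignores the values and that value substitution preserves the constructor skeleton. The only difference is that you make explicit what the paper leaves implicit: the induction hypothesis must be quantified over arbitrary substitutions, including the extended ones $[\overline{v},y/\overline{x},y]$ in the get cases and the sublists in the let and put cases.
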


From here, we easily obtain that reduction decreases the size of the term involved:
\begin{lemma}\label{lem:decreasing}
  If $t \to t'$ then $\#(t) > \#(t')$.
\end{lemma}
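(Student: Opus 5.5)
The plan is to exploit the fact that $\to$ is the least congruence generated by the rewrites of Figure~\ref{fig:rewrites}. It therefore suffices to prove two things: (i) each generating rewrite \textbf{[R0]}--\textbf{[R10]}, applied at the root, strictly decreases $\#$; and (ii) $\#$ is strictly monotone in each immediate subterm. Given these, an induction on the position of the rewritten subterm (i.e.\ on the one-hole context) finishes the proof.

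First I record that $\#(t) \geq 2$ for every term, by an easy induction: neutral terms have size $2$, let-bindings are products of factors that are each at least $2$, and get/put terms have size at least $5$. Step (ii) is then immediate from the defining clauses. In the let clause, $\#$ is $2$ times a product of the sizes of the subterms, and every factor is positive. In the get/put clauses, $\#$ is an affine function of $\#(s)$ with slope $2$. Binders play no role, since $\#$ ignores variables and values.

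For step (i), write $a = \#(t)$, $b = \#(s)$, $r$ for the size of the body, and $P \geq 1$ for the product of the sizes of the untouched terms $\overline{a},\overline{b}$.
\begin{itemize}
\item \textbf{[R0]}: Lemma~\ref{lem:subst-size} gives $\#(t[\overline{v}/\overline{x}]) = \#(t)$. The left-hand side has size $2\cdot 2^n\cdot \#(t)$, which is strictly larger.
\item \textbf{[R1]}: using Lemma~\ref{lem:subst-size} again, the comparison is $2P(1+2a)(2+2b)r$ against $2Pabr$, which is clearly a decrease. \textbf{[R2]} is symmetric.
\item \textbf{[R3]}/\textbf{[R4]}: the comparison is $2(2+2a)Pr = 4Pr + 4aPr$ against $2 + 2(2aPr) = 2 + 4aPr$. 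This is a decrease because $4Pr \geq 8 > 2$.
\item \textbf{[R5]}/\textbf{[R6]}: the comparison is $2(1+2a)Pr = 2Pr + 4aPr$ against $1 + 4aPr$, a decrease since $2Pr > 1$.
\item \textbf{[R7]}--\textbf{[R10]}: each left-hand side is a right-facing constructor over a left-facing one, of size $1 + 2(2+2a) = 5 + 4a$. The reduct swaps the two constructors and has size $2 + 2(1+2a) = 4 + 4a$.
\end{itemize}
It is exactly the asymmetric weights ($2$ for left-facing, $1$ for right-facing) that make these last four rules decrease.

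The only step needing care is \textbf{[R3]}--\textbf{[R6]}, where the let-binding's factor of $2$ gets traded for the additive constant of the outer constructor. This is where the lower bound $\#(\cdot) \geq 2$ is used to beat the constant. Everything else is routine arithmetic together with Lemma~\ref{lem:subst-size}.
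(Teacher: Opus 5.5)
Your proposal is correct and follows the same route as the paper: a size computation for each generating rewrite \textbf{[R0]}--\textbf{[R10]} at the root, using Lemma~\ref{lem:subst-size} for the substitution cases, and then the observation that $\#$ is strictly monotone in immediate subterms, so a decrease propagates through any context. Your version is a little more careful than the paper's: you make positivity of $\#$ and the monotonicity step explicit, and your \textbf{[R3]} right-hand side $2+4aPr$ keeps the factor of $2$ contributed by the inner let-binding, which the paper's displayed chain drops (the inequality holds either way).
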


We may now record, since $\to$ reduces the size and this cannot be done infinitely many times, that:
\begin{corollary}\label{lem:termination}
  The rewrite relation $\to$ is terminating
\end{corollary}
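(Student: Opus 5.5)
The corollary follows from Lemma~\ref{lem:decreasing} by well-foundedness of the natural numbers. Suppose, for contradiction, that there is an infinite reduction sequence $t_0 \to t_1 \to t_2 \to \cdots$. Applying Lemma~\ref{lem:decreasing} to each step yields an infinite strictly decreasing chain $\#(t_0) > \#(t_1) > \#(t_2) > \cdots$ in $\mathbb{N}$. No such chain exists, since $\#(t_k) \leq \#(t_0) - k$ would eventually be negative. Hence every reduction sequence is finite, and $\to$ is terminating.

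All the substantive work therefore sits in Lemma~\ref{lem:decreasing}, which we may assume. One point is worth checking here: that lemma must hold for the full relation $\to$, i.e.\ the least congruence generated by Figure~\ref{fig:rewrites}, and not only for the root instances of the generating rewrites. This requires $\#$ to be strictly monotone in each immediate subterm position. That holds because every clause of $\#$ is built from subterm sizes using only addition of constants and multiplication by factors that are at least $2$. In particular, in a let-binding every other factor $\#(t_j)$ and $\#(s)$ is at least $1$; indeed every size is at least $2$.

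A second point concerns the rewrites \textbf{[R0]}, \textbf{[R1]} and \textbf{[R2]}, whose reducts involve value substitution. The sizes of those reducts are controlled by Lemma~\ref{lem:subst-size}, and again we take that lemma as given. Beyond these observations there is no real obstacle: the corollary is the standard argument that a map into a well-founded order which strictly decreases along $\to$ rules out infinite reduction.
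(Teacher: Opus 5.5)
Your proof is correct and follows the paper's own argument: termination follows from Lemma~\ref{lem:decreasing}, since a strictly decreasing sequence in $\mathbb{N}$ must be finite. Your two side checks, that $\#$ is strictly monotone in each subterm position and that Lemma~\ref{lem:subst-size} controls \textbf{[R0]}--\textbf{[R2]}, are already part of the paper's proof of Lemma~\ref{lem:decreasing}. They confirm that lemma rather than add any new step.
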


Our next goal will be to show that $\to$ is confluent. The following lemma is helpful:
\begin{lemma}\label{lem:let-reduce}
  Any let-binding reduces.
\end{lemma}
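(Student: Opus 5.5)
Every let-binding $t = \textsf{let } x_1 \seq x_n \asn (t_1 \mid \cdots \mid t_n) \textsf{ in } s$ has a redex at its root, namely an instance of one of \textbf{[R0]}--\textbf{[R6]}. I will find it by case analysis on the shapes of the inputs $t_1 \seq t_n$. Each $t_i$ is neutral, left-facing, right-facing, or itself a let-binding.

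If some $t_i$ is a let-binding, I will argue by structural induction instead. The let-binding $t_i$ is a strictly smaller term, so by the induction hypothesis it reduces. Since $\to$ is a congruence, $t$ then reduces as well. So I may assume that every $t_i$ is neutral, left-facing, or right-facing.

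If every $t_i$ is neutral, \textbf{[R0]} applies directly. (The case $n = 0$ falls here vacuously: then $\textsf{let} \asn () \textsf{ in } s \to s$ by \textbf{[R0]}.) Otherwise some input is non-neutral, and I consider the boundary inputs first:
\begin{itemize}
\item if $t_1$ is left-facing, \textbf{[R3]} or \textbf{[R4]} applies;
\item if $t_n$ is right-facing, \textbf{[R5]} or \textbf{[R6]} applies.
\end{itemize}

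Otherwise $t_1$ is neutral or right-facing, $t_n$ is neutral or left-facing, and some input is non-neutral. This is the main step, where typing must be used. Consider the sequence of facings of the non-neutral inputs. If it contains a right-facing input immediately followed by a left-facing one, take such a pair $t_i$ (right-facing) and $t_j$ (left-facing) with all inputs strictly between them neutral. If no such adjacency exists, every left-facing input precedes every right-facing one. But then either a left-facing input occurs at or before any right-facing one, or there are only right-facing or only left-facing inputs. Chasing to the extremes, the first non-neutral input is left-facing, or the last is right-facing.

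I need to move from the first non-neutral input to the first position, and this is where the typing of \textsc{let} enters. A neutral input $t_k = [v]$ has type $(I,A,I)$, that is $(\lambda,A,\lambda)$. The typing constraint $t_k : (U_{k-1},A_k,U_k)$ therefore forces $U_{k-1} = U_k = \lambda$. A left-facing $t_j$ has left boundary $U_{j-1}$ beginning with a letter, so $U_{j-1} \neq \lambda$. Hence the input immediately to its left cannot be neutral: it shares the boundary $U_{j-1}$, which would force $U_{j-1}=\lambda$. Symmetrically, the input immediately to the right of a right-facing input is not neutral. So in the ``first non-neutral is left-facing'' case, that input is $t_1$, which was already handled; in the ``last non-neutral is right-facing'' case, that input is $t_n$, also handled.

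In the adjacency case, the same argument shows that $t_i$ and $t_j$ are literally adjacent, so $j = i+1$. Their shared boundary $U_i$ must begin with the letter written by $t_i$ on its right and also with the letter read by $t_{i+1}$ on its left. Matching polarities, $\putr$ ($A^\circ$) pairs with $\getl$ ($A^\circ$), giving \textbf{[R1]}. Likewise $\getr$ ($A^\bullet$) pairs with $\putl$ ($A^\bullet$), giving \textbf{[R2]}. The two mismatched combinations $\putr/\putl$ and $\getr/\getl$ would give $U_i$ two different first polarities, which is impossible. The case analysis is then complete.

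I expect the main obstacle to be this typing bookkeeping. The key points are that neutral inputs force empty interfaces, so non-neutral inputs cannot be separated by neutral ones across a nonempty interface, and that polarities rule out the mismatched adjacent pairs.
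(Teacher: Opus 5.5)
Your argument is correct and is essentially the paper's proof. The paper inducts on size rather than on structure, and instead of first dispatching the boundary cases and then looking for a right-facing/left-facing adjacency among the non-neutral inputs, it takes the least index of a left-facing input (either the first input, or one whose predecessor must be right-facing) or else the greatest index of a right-facing input (which must be $n$); it rests on the same facts you use, namely that neutral inputs force empty boundaries and that polarities force \textbf{[R1]} or \textbf{[R2]}.

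One caveat, which the paper shares: for $n=0$ the boundary $U_0$ in the \textsc{let} rule is unconstrained, and $\textsf{let } \asn () \textsf{ in } s \to s$ preserves the type only when $U_0=\lambda$. That degenerate case is therefore better excluded than treated as a vacuous instance of \textbf{[R0]}.
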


We note that Lemma~\ref{lem:let-reduce} may be viewed as a kind of cut-elimination theorem for the sequent calculus that presents $T(\mathcal{M})$ (Definition~\ref{def:terms}). From the perspective of our interpretation of the calculus in terms of interacting processes, we can think of this in terms of ``deadlock freedom'': no term in normal form may contain a let-binding, which means that any internally resolvable interaction in a term of $T(\mathcal{M})$ is in fact resolved by reducing the term in question to normal form.

Our proof of confluence relies on a bit of technical machinery concerning terms in context. We define:
\begin{definition}
  For each $U,V,W \in \ex{\mathcal{M}_0}$, each $B \in \mathcal{M}_0$, and each $\Gamma,\Delta \in \mathcal{M}_0^*$ the \emph{term contexts}
  \[ \mathcal{L} : \wiggle{U}{\blacksquare}{\Delta}{\blacksquare} \rightsquigarrow \wiggle{V}{\Gamma,\blacksquare}{B}{\blacksquare W} \]
  are constructed according to the following inference rules:
\begin{mathpar}
  \inferrule{(\Gamma_i \Vdash a_i : (U_{i-1},A_i,U_i))_{i = 1}^k \\ x_1 {:} A_1 \seq x_n {:} A_n \Vdash r : (V,B,W) \\ k < n}{\textsf{let } x_1 \seq x_n \asn (a_1 \mid \cdots \mid a_k \mid \blacksquare) \textsf{ in } r : \floatwiggle{U_k}{\blacksquare}{A_{n-k},\ldots, A_n}{\blacksquare} \rightsquigarrow \floatwiggle{U_0 V}{\Gamma_1 ,\ldots, \Gamma_k,\blacksquare}{B_{}}{\blacksquare W}}

  \inferrule{\Sigma \vdash_{\mathcal{M}}\, v : A \\ \mathcal{L} : \floatwiggle{U}{\blacksquare}{\Delta_{}}{\blacksquare} \rightsquigarrow \floatwiggle{V}{\Gamma,\blacksquare}{B_{}}{\blacksquare W}}{\putl(v,\mathcal{L}) : \floatwiggle{U}{\blacksquare}{\Delta_{}}{\blacksquare} \rightsquigarrow \floatwiggle{A^\bullet V}{\Sigma,\Gamma,\blacksquare}{B_{}}{\blacksquare W}}

  \inferrule{\mathcal{L} : \floatwiggle{U}{\blacksquare}{\Delta_{}}{\blacksquare} \rightsquigarrow \floatwiggle{V}{A,\Gamma,\blacksquare}{B_{}}{\blacksquare W}}{\getl(x.\mathcal{L}) : \floatwiggle{U}{\blacksquare}{\Delta_{}}{\blacksquare} \rightsquigarrow \floatwiggle{A^\circ V}{\Gamma,\blacksquare}{B_{}}{\blacksquare W}}
\end{mathpar}
Contexts $\mathcal{L}$ may be pictured as in:
\begin{mathpar}
  \mathcal{L}
  \hspace{0.3cm}
  \leftrightsquigarrow
  \hspace{0.3cm}
  \includegraphics[height=1.25cm,align=c]{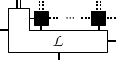} 
\end{mathpar}
in which case the individual inference rules for constructing term contexts are pictured as in:
\begin{mathpar}
  \putl(v,\mathcal{L})
  \hspace{0.3cm}
  \leftrightsquigarrow
  \hspace{0.3cm}
  \includegraphics[height=2cm,align=c]{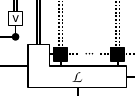} 

  \getl(x.\mathcal{L})
  \hspace{0.3cm}
  \leftrightsquigarrow
  \hspace{0.3cm}
  \includegraphics[height=1.8cm,align=c]{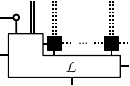} 

  \textsf{let } x_1\seq x_n \asn (a_1 \mid \cdots \mid a_k \mid \blacksquare) \textsf{ in } r
  \hspace{0.3cm}
  \leftrightsquigarrow
  \hspace{0.3cm}
  \includegraphics[height=1.25cm,align=c]{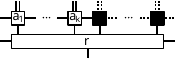} 
\end{mathpar}

We obtain terms from term contexts by supplying a sequence of terms of the appropriate type. Specifically, if $\mathcal{L} : \wiggle{U}{\blacksquare}{A_1,\ldots,A_k}{\blacksquare} \rightsquigarrow \wiggle{V}{\Gamma,\blacksquare}{B}{\blacksquare W}$ is a term context, then whenever $(\Gamma_i \Vdash t_i : (U_{i-1},A_i,U_i))_{i =1}^k$ we have $\Gamma,\Gamma_1 \seq \Gamma_k \Vdash \mathcal{L}[t_1 \seq t_k] : (V,B,U_k W)$ with $\mathcal{L}[-]$ defined as in:
\begin{mathpar}
  \mathcal{L}[t_1\seq t_k] =
  \begin{cases}
    \textsf{let } x_1 \seq x_n \asn (a_1 \mid \cdots \mid a_m \mid t_1 \mid \cdots \mid t_k) \textsf{ in } r &\text{if } \mathcal{L} = \textsf{let } x_1 \seq x_n \asn (a_1\mid \cdots \mid a_m \mid \blacksquare) \textsf{ in } r\\
    \putl(v,\mathcal{L}'[t_1 \seq t_k]) &\text{if } \mathcal{L} = \putl(v,\mathcal{L}')\\
    \getl(x.\mathcal{L}'[t_1 \seq t_k]) &\text{if } \mathcal{L} = \getl(x.\mathcal{L}')
  \end{cases}
\end{mathpar}
\end{definition}
Graphically:
\begin{mathpar}
  \mathcal{L}[t_1\seq t_k]
  \hspace{0.3cm}
  \leftrightsquigarrow
  \hspace{0.3cm}
  \includegraphics[height=1.25cm,align=c]{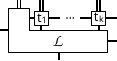} 
\end{mathpar}

Write $s(\mathcal{L})$ to indicate the let-bound sequence of terms in $\mathcal{L}$. Explicitly:
\[
  s(\mathcal{L}) =
  \begin{cases}
    a_1 \seq a_k &\text{if } \mathcal{L} = \textsf{let } x_1 \seq x_n \asn (a_1 \mid \cdots \mid a_k \mid \blacksquare) \textsf{ in } r\\
    s(\mathcal{L}') &\text{if } \mathcal{L} = \putl(v,\mathcal{L}') \text{ or } \mathcal{L} = \getl(x.\mathcal{L}')\\
  \end{cases}
\]
Note that $s(\mathcal{L})$ may be empty. In case it is not, we call $\mathcal{L}$ \emph{nonempty}. A nonempty term context $\mathcal{L} : \wiggle{U}{\blacksquare}{\Delta}{\blacksquare} \rightsquigarrow \wiggle{V}{\Gamma,\blacksquare}{B}{\blacksquare W}$ is said to be \emph{effectful} in case $U \neq I$. An effectful term context $\mathcal{L}$ is said to be \emph{active} in case $s(\mathcal{L}) = a_1 \seq a_k$ where $a_k$ is right-facing. Note further that for any nonempty $\mathcal{L} : \wiggle{U}{\blacksquare}{\Delta}{\blacksquare} \rightsquigarrow \wiggle{V}{\Gamma,\blacksquare}{B}{\blacksquare W}$ with $s(\mathcal{L}) = a_1\seq a_k,a$ where $a : \cells{U'}{\Sigma}{C}{U}$ there exists another term context $\mathcal{L}' : \wiggle{U'}{\blacksquare}{C,\Delta}{\blacksquare} \rightsquigarrow \wiggle{V}{\Gamma,\Sigma,\blacksquare}{B}{\blacksquare W}$ such that $\mathcal{L}[t_1 \seq t_n] = \mathcal{L}'[a,t_1\seq t_n]$. In particular this means that for any active $\mathcal{L}$ with $s(\mathcal{L}) = a_1 \seq a_k,\putr(v,t)$ there exists $\mathcal{L}'$ such that $\mathcal{L}[t_1\seq t_n] = \mathcal{L}'[\putr(v,t),t_1\seq t_n]$, and similarly if $s(\mathcal{L}) = a_1\seq a_k ,\getr(x.t)$ there exists $\mathcal{L}'$ such that $\mathcal{L}[t_1\seq t_n] = \mathcal{L}'[\getr(x.t),t_1 \seq t_n]$. We say that $\mathcal{L} \to \mathcal{L}'$ in case for all suitable $t_1,\ldots,t_k$ we have $\mathcal{L}[t_1 \seq t_k] \to \mathcal{L}'[t_1 \seq t_k]$. Moreover, we define $\#(\mathcal{L}) \in \mathbb{N}$ as follows:
\[
  \#(\mathcal{L}) = \begin{cases}
		2 \cdot \prod_{i=1}^n(\#(a_i)) \cdot \#(r) & \text{if } \mathcal{L} = \textsf{let } x_1 \seq x_n \asn (a_1 \mid \cdots \mid a_k \mid \blacksquare) \textsf{ in } r\\
		2 + (2 \cdot \#(\mathcal{L}')) & \text{if } \mathcal{L} = \putl(v,s) \textsf{ or } \mathcal{L} = \getl(x.\mathcal{L}')
	\end{cases}
\]
Note that if $\mathcal{L} \to \mathcal{L}'$ then $\#(\mathcal{L}) > \#(\mathcal{L}')$, via a straightforward extension of the proof of Lemma \ref{lem:termination}.

We may now state the main lemma concerning term contexts:
\begin{lemma}\label{lem:effectful-active}
  Let $\mathcal{L}$ be an effectful term context, then there is an active term context $\mathcal{L}'$ such that $\mathcal{L} \stackrel{*}{\to} \mathcal{L}'$.
\end{lemma}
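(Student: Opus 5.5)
Let $\mathcal{L} : \wiggle{U}{\blacksquare}{\Delta}{\blacksquare} \rightsquigarrow \wiggle{V}{\Gamma,\blacksquare}{B}{\blacksquare W}$ be effectful, so $s(\mathcal{L}) = a_1 \seq a_k$ is nonempty and $U \neq I$. I argue by well-founded induction on $\#(\mathcal{L})$, using that $\mathcal{L} \to \mathcal{L}'$ implies $\#(\mathcal{L}) > \#(\mathcal{L}')$. Since $U \neq I$ is the right boundary of $a_k$, the last let-bound term $a_k : \cells{U_{k-1}}{\Sigma}{C}{U}$ cannot be neutral. If $a_k$ is right-facing, $\mathcal{L}$ is already active and we take $\mathcal{L}' = \mathcal{L}$. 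It remains to treat $a_k$ left-facing or a let-binding; in each case I produce a reduct $\mathcal{L} \to \mathcal{L}''$ with $\mathcal{L}''$ still effectful, since the hole and its right boundary $U$ are untouched, and then apply the induction hypothesis.

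If $a_k$ is a let-binding, Lemma~\ref{lem:let-reduce} gives $a_k \to a_k'$. The congruence closure of $\to$ then yields $\mathcal{L} \to \mathcal{L}''$, where $\mathcal{L}''$ replaces $a_k$ by $a_k'$. This rewrite happens strictly inside the fixed part of the context, so it is uniform in the terms plugged into the hole.

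Suppose instead $a_k$ is left-facing, say $\putl(v,t)$ or $\getl(y.t)$. Look at $a_{k-1}$.
\begin{itemize}
\item If $k = 1$, then $a_k$ is the first let-bound term. Rule \textbf{[R3]} or \textbf{[R4]} pulls the put or get outside the let, turning $\mathcal{L}$ into $\putl(v,\mathcal{L}_0)$ or $\getl(y.\mathcal{L}_0)$. Here $\mathcal{L}_0$ has let-bound sequence $t$ followed by the hole, and hence is still effectful. This works for every filling of the hole, so it is a genuine context reduction.
\item If $k > 1$ and $a_{k-1}$ is right-facing with the matching polarity (its right boundary starts with the same letter that $a_k$'s left boundary starts with), then \textbf{[R1]} or \textbf{[R2]} fires between $a_{k-1}$ and $a_k$.
\item If $a_{k-1}$ is left-facing, neutral, or a let-binding, we cannot act at position $k$. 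We then pass to the shorter prefix $a_1 \seq a_{k-1}$, viewing $a_{k-1}$ as the last term of an auxiliary effectful context whose right boundary $U_{k-1}$ is nonempty.
\end{itemize}

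The main obstacle is to organise this last case so the induction goes through. My approach is to prove a stronger auxiliary statement by induction on $k$ (nested inside the $\#$-induction): for a let-bound sequence $a_1 \seq a_k$ inside a context whose last term is not right-facing, some rewrite applies strictly within the fixed part. Neutral terms only occur where the adjacent boundary is $\lambda$, so by typing the first non-neutral term to the left has a nonempty right boundary. Then exactly one of three things happens: there is an \textbf{[R1]}/\textbf{[R2]} redex, a leftmost left-facing term lifts via \textbf{[R3]}/\textbf{[R4]}, or an inner let-binding reduces by Lemma~\ref{lem:let-reduce}. One might worry about a right-facing term that faces a mismatched left-facing one. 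Typing excludes this, since adjacent boundaries agree, so a $\putr$ at the head of $U_{i}$ forces the head of $U_i$ to be $A^\circ$, and a left-facing neighbour with that boundary must then be a $\getl$. Each step strictly decreases $\#$ and preserves effectfulness, so the process terminates. It can only stop at a context whose last let-bound term is right-facing, and that context is active.
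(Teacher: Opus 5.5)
Your proof is correct and is essentially the paper's argument: induction on $\#(\mathcal{L})$, reducing let-bound let-bindings via Lemma~\ref{lem:let-reduce}, and otherwise firing \textbf{[R1]}/\textbf{[R2]} where a right-facing term meets a left-facing one or \textbf{[R3]}/\textbf{[R4]} at the front, until the last let-bound term is right-facing. The organisation differs only slightly: the paper treats $\putl(v,\mathcal{L}')$ and $\getl(x.\mathcal{L}')$ as separate inductive cases and uses the globally leftmost left-facing term, whereas you reduce under congruence and scan leftward from $a_k$. One small correction: the term immediately left of a left-facing term can never be neutral, because its right boundary is that term's nonempty left boundary, so your ``neutral'' subcase is vacuous, and ``exactly one'' of your three outcomes should read ``at least one.''
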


We require one further technical lemma:
\begin{lemma}\label{lem:boundary-multipop}
  Let $\mathcal{L}$ be a term context. Then we have:
  \begin{enumerate}
  \item\label{sublem:boundary-multipop-first} $\mathcal{L}[b_1\seq b_n , \putr(v,t)]$ and $\putr(v,\mathcal{L}[b_1 \seq b_n , t])$ are joinable.
  \item\label{sublem:boundary-multipop-second} $\mathcal{L}[b_1\seq b_n ,\getr(x.t)]$ and $\getr(x.\mathcal{L}[b_1 \seq b_n,t])$ are joinable.
  \end{enumerate}
  whenever these expressions make sense. Here when we say two terms $t$ and $t'$ of $T(\mathcal{M})$ are \emph{joinable} we mean that there exists some $h$ such that $t \stackrel{*}{\to} h \stackrel{*}{\from} t'$.
\end{lemma}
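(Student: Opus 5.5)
We argue by well-founded induction on $\#(\mathcal{L})$, using the size bound recorded after the definition of $\#(\mathcal{L})$; alternatively one can induct on the structure of $\mathcal{L}$. We treat part~\ref{sublem:boundary-multipop-first}; part~\ref{sublem:boundary-multipop-second} is identical, with \textbf{[R6]}, \textbf{[R10]}, \textbf{[R9]} and \textbf{[R2]} in place of \textbf{[R5]}, \textbf{[R7]}, \textbf{[R8]} and \textbf{[R1]}.

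\emph{Structural cases.} If $\mathcal{L} = \putl(w,\mathcal{L}')$, the induction hypothesis gives a common reduct $h$ of $\mathcal{L}'[\overline{b},\putr(v,t)]$ and $\putr(v,\mathcal{L}'[\overline{b},t])$. Since $\to$ is a congruence, $\putl(w,h)$ is a reduct of the left-hand side $\putl(w,\mathcal{L}'[\overline{b},\putr(v,t)])$. On the right-hand side, \textbf{[R7]} rewrites $\putr(v,\putl(w,\mathcal{L}'[\overline{b},t]))$ to $\putl(w,\putr(v,\mathcal{L}'[\overline{b},t]))$, which reduces to $\putl(w,h)$. The case $\mathcal{L} = \getl(x.\mathcal{L}')$ is the same, using \textbf{[R8]}. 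Its side condition holds because $x$ is bound inside the context, so by the variable conventions it does not occur in $v$.

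\emph{Base case.} Suppose $\mathcal{L} = \textsf{let } \overline{x} \asn (a_1 \mid \cdots \mid a_k \mid \blacksquare) \textsf{ in } r$. Then the left-hand side is
\[
\textsf{let } \overline{x} \asn (\overline{a} \mid \overline{b} \mid \putr(v,t)) \textsf{ in } r ,
\]
in which $\putr(v,t)$ is the last let-bound term. A single application of \textbf{[R5]} rewrites this to $\putr(v,\textsf{let } \overline{x} \asn (\overline{a}\mid\overline{b}\mid t)\textsf{ in } r)$, which is syntactically the right-hand side. So the two terms are joinable in one step.

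\emph{Main obstacle.} The hypothesis ``whenever these expressions make sense'' has to be read with care, because typing could allow $\putr(v,t)$ to be absorbed rather than floated out. In the base case it always sits at the right end of the let-sequence, so \textbf{[R5]} applies directly. The risk lies in the inductive cases: $\mathcal{L}[\ldots]$ might be produced by re-associating a context $\mathcal{L}'$ via the construction noted before Lemma~\ref{lem:effectful-active}, where $\mathcal{L}[t_1\seq t_n] = \mathcal{L}'[a,t_1\seq t_n]$. Since the $\putr$ stays last in every such decomposition, \textbf{[R5]} should still fire. If instead it can face a $\getl$ neighbour (an interaction via \textbf{[R1]}), we would close the resulting peak using termination (Corollary~\ref{lem:termination}) together with the induction hypothesis on the smaller context. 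We expect this to be the delicate point to check.
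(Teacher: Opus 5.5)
Your proposal is correct and is essentially the paper's proof. Both argue by induction on the form of $\mathcal{L}$: a single \textbf{[R5]} step handles the let-binding case, and \textbf{[R7]} (resp.\ \textbf{[R8]}) together with congruence and the induction hypothesis handles the $\putl$ (resp.\ $\getl$) case.

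Two small remarks. First, in part (ii) the $\putl$ case uses \textbf{[R9]} and the $\getl$ case uses \textbf{[R10]}; your stated correspondence swaps them, and \textbf{[R1]}/\textbf{[R2]} are never needed. Second, your ``main obstacle'' is a non-issue: $\mathcal{L}[-]$ is defined by recursion on $\mathcal{L}$, so the $\putr$ always ends up last in the let-sequence, and joinability only requires the one reduction you already exhibit.
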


We know that $\to$ is terminating, so to show that it is confluent it suffices to show that it is locally confluent. This is proven by analysis of the possible critical pairs, which are all joinable. The term contexts play an important technical role in joining certain critical pairs. 
\begin{lemma}\label{lem:local-confluence}
  $\to$ is locally confluent.
\end{lemma}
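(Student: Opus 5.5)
We prove local confluence by the critical pair lemma for this (higher-order, binder-containing) rewriting system. If $t \to t_1$ and $t \to t_2$, either the two redexes are in disjoint positions, or one is nested strictly inside a substitution-variable part of the other, or they overlap at the root of a generating rewrite. The first case closes in one step each way, since $\to$ is a congruence.

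The second case is the nested non-overlapping one. Here the inner redex lies inside one of the metavariables $t,s,r,\overline{a},\overline{b}$ of a rule. Most rules are linear in their metavariables, so this case is routine. The rules that duplicate or transform material are \textbf{[R0]}, \textbf{[R1]} and \textbf{[R2]}, which apply value substitution to $t$ or $s$. For these we first establish a small auxiliary fact: value substitution commutes with rewriting, i.e.\ if $s \to s'$ then $s[\overline{v}/\overline{x}] \to s'[\overline{v}/\overline{x}]$. This is proved by induction on the rewrite, using the clause-by-clause definition of \textsc{vsub} and associativity of \textsc{comp} in $\mathcal{M}$. Rewrites inside the values $v$ themselves do not exist, since values are morphisms of $\mathcal{M}$, so this case closes.

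The third case consists of the genuine critical pairs, which we enumerate by rule pairs. Overlaps among \textbf{[R7]}--\textbf{[R10]} are of the shape $\getr/\putr$ over $\getl/\putl$ over a further left-facing constructor, say $\putr(v,\putl(w,\getl(x.t)))$. These are joined by commuting in either order, because left-facing constructors never rewrite past each other. Overlaps among \textbf{[R1]}--\textbf{[R6]} inside a single let-binding come in two kinds:
\begin{itemize}
\item two interactions \textbf{[R1]}/\textbf{[R2]} at different adjacent positions, or an interaction together with an extrusion \textbf{[R3]}--\textbf{[R6]} at the ends. These act on distinct components and commute directly.
\item \textbf{[R3]}/\textbf{[R4]} together with \textbf{[R5]}/\textbf{[R6]} when the let has a single component that is, e.g., $\putl(v,\putr(w,t))$ or similar. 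Each side reaches a term differing only by an application of \textbf{[R7]}--\textbf{[R10]}, which is exactly why those rules exist.
\end{itemize}
Overlaps of \textbf{[R0]} with the others are impossible at the root, since neutral terms have no get/put.

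The hard cases are overlaps where a let-component is itself a let-binding or a left-facing term sitting beneath an outer let. Consider \textbf{[R5]}/\textbf{[R6]} applied to an inner let that is the last component of an outer let. Pulling $\putr(v,-)$ out of the inner let exposes it to the outer one, and joining requires moving $\putr$ through an arbitrary stack of $\putl/\getl$ and let-frames. Similarly, \textbf{[R1]} may interact a $\putr$ that has been extruded from a nested let. These are handled with term contexts. One side has the form $\mathcal{L}[b_1\seq b_n,\putr(v,t)]$ and the other $\putr(v,\mathcal{L}[b_1\seq b_n,t])$, and Lemma~\ref{lem:boundary-multipop} gives joinability directly; the $\getr$ variants use its second part. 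When the joining further requires an effectful context to expose a right-facing final component so that an interaction can fire, we invoke Lemma~\ref{lem:effectful-active} to rewrite the context to an active one first, and then apply Lemma~\ref{lem:boundary-multipop}.

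I expect the main obstacle to be organizing this last class of overlaps exhaustively: checking that every nested overlap fits the $\mathcal{L}[\ldots]$ shape, with the correct type bookkeeping on the exchanges $U$, so that Lemmas~\ref{lem:effectful-active} and~\ref{lem:boundary-multipop} apply.
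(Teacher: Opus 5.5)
\textbf{There is a genuine gap: your list of critical pairs misses the overlaps that make this lemma nontrivial, and the cases you call hard are not overlaps at all.}

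Rules \textbf{[R1]}, \textbf{[R2]}, \textbf{[R5]}, \textbf{[R6]} match on the outermost constructor of a right-facing let-component. Rules \textbf{[R7]}--\textbf{[R10]} have redexes rooted at exactly that constructor. So $\textsf{let } \overline{x} \asn (\overline{a} \mid \putr(v,\putl(u,t)) \mid \getl(x.s) \mid \overline{b}) \textsf{ in } r$ (\textbf{[R1]} against \textbf{[R7]}) and $\textsf{let } \overline{x} \asn (\overline{a} \mid \putr(v,\putl(u,t))) \textsf{ in } r$ (\textbf{[R5]} against \textbf{[R7]}) are genuine critical pairs. The same holds for \textbf{[R1]}/\textbf{[R8]}, \textbf{[R2]}/\textbf{[R9]}, \textbf{[R2]}/\textbf{[R10]}, \textbf{[R5]}/\textbf{[R8]}, \textbf{[R6]}/\textbf{[R9]} and \textbf{[R6]}/\textbf{[R10]}, eight classes in all. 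None falls under your second case, since the inner redex is not inside a metavariable. None appears in your third case either.

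Conversely, no rule firing at an outer let ever matches a component that is itself a let-binding. So an inner \textbf{[R5]} step in the last component of an outer let is just a redex nested in the metavariable $\overline{a}$ or $\overline{b}$ of the outer rule. That it later creates a new outer redex is irrelevant to local confluence, which concerns only one-step divergences.

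Two smaller inaccuracies:
\begin{itemize}
\item \textbf{[R7]}--\textbf{[R10]} have no overlaps among themselves. In $\putr(v,\putl(w,\getl(x.t)))$ only the root is a redex, because no rule has a left-facing root.
\item The \textbf{[R3]}/\textbf{[R4]} versus \textbf{[R5]}/\textbf{[R6]} overlap needs at least two components, as in $\textsf{let } \overline{x} \asn (\putl(v,t) \mid \overline{a} \mid \putr(u,s)) \textsf{ in } r$. A single component $\putl(v,\putr(w,t))$ does not match \textbf{[R5]}. For the correct shape, your resolution by one commuting step is right.
\end{itemize}

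\textbf{The missing idea for the eight real hard pairs.} After the commuting step the component is, e.g., $\putl(u,\putr(v,t))$. The exposed $\putl(u,-)$ now blocks both the interaction and the extrusion, so it must be discharged before the two sides can meet.
\begin{itemize}
\item If $\overline{a}$ is empty, extrude it with \textbf{[R3]} on both sides. In the interaction case, close with one further \textbf{[R1]} step. In the extrusion case, close with \textbf{[R7]} on one side and \textbf{[R5]} on the other.
\item If $\overline{a}$ is nonempty, the context $\mathcal{L} = \textsf{let } \overline{x} \asn (\overline{a} \mid \blacksquare) \textsf{ in } r$ is effectful, since its hole boundary begins with $A^\bullet$. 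Lemma~\ref{lem:effectful-active} rewrites it to an active context whose last component is forced by typing to be some $\getr(y.h)$. An \textbf{[R2]} step then consumes $\putl(u,-)$ on both sides. After that, a single \textbf{[R1]} step closes the interaction case, and Lemma~\ref{lem:boundary-multipop} closes the extrusion case.
\end{itemize}
Your final paragraph already names these two lemmas, but it applies them to situations that need no joining. As written, the case analysis is therefore not exhaustive.

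Your explicit handling of the nested case, via value substitution commuting with $\to$, is a good addition that the paper leaves implicit.
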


We briefly discuss the proof of Lemma~\ref{lem:local-confluence}. There are twelve critical pairs which need to be resolved, which fall into three general groups:
\begin{itemize}
	\item In the first group, an internal interaction with \textbf{[R1,R2]} comes into critical conflict with a commutation of gets and puts with \textbf{[R7-R10]}. This is exemplified by the following term:
	\[
	\textsf{let } x_1 \seq x_n \asn (\overline{a} \mid \putr(v,\putl(u,t)) \mid \getl(x.s) \mid \overline{b}) \textsf{ in } r
	\]
	to which we can apply \textbf{R1} and \textbf{R7}. This pair cannot be directly resolved, since there is a $\putl(u,-)$ in the way. To join this pair, consider the term context $\textsf{let } x_1 \seq x_n \asn (\overline{a} \mid \blacksquare) \textsf{ in } r$ and reduce this to an \emph{active} term context. Such an active term context has an appropriate $\getr(-.-)$ in the right place to resolve the interfering $\putl(u,-)$ with an internal interaction \textbf{R2}. Once resolved, the pair can be joined.
	\item In the second type, we can move a get or put above a let term, both on the left with \textbf{[R3,R4]} and the right with \textbf{[R5,R6]}. This is exemplified by the following term:
	\[ \textsf{let } x_1 \seq x_n \asn (\putl(v,t) \mid \overline{a} \mid \putr(u,s)) \textsf{ in } r \]
	to which we can apply rule \textbf{R3} and \textbf{R5}. This pair can be resolved by commuting the puts using \textbf{R7}.
	\item In the third group, we can move a get or put above a let with \textbf{[R5,R6]}, but this comes in critical conflict with the commuting of gets and puts with \textbf{[R7-R10]}. This is exemplified by the term:
	\[ \textsf{let } x_1 \seq x_n \asn (\overline{a} \mid \putr(v,\putl(u,t))) \textsf{ in } r\]
	to which we can apply rule \textbf{R5} and \textbf{R7}.
	Same as in the first group, we use term contexts to reduce the term to such a state in which we can resolve the $\putl(u,-)$ and join the terms.
\end{itemize}
The proof of Lemma~\ref{lem:local-confluence} can be found in Appendix~\ref{app:local-confluence}.
Note that the rules \textbf{[R7-R10]} are necessary to resolve a critical pair group arising from rules \textbf{[R0-R6]}. The rules \textbf{[R7-R10]} themselves give rise to further critical pairs, which are resolved using term contexts.

Now, from Lemma~\ref{lem:termination} and Lemma~\ref{lem:local-confluence} we have:
\begin{theorem}\label{thm:confluence}
  $\to$ is confluent.
\end{theorem}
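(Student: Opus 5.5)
The plan is to apply Newman's Lemma: a terminating, locally confluent abstract rewriting system is confluent. Both hypotheses are already established earlier in the paper. Corollary~\ref{lem:termination} gives termination of $\to$, via the strictly decreasing size measure $\#$ of Lemma~\ref{lem:decreasing}. Lemma~\ref{lem:local-confluence} gives local confluence. The proof of the theorem therefore amounts to checking that the hypotheses of Newman's Lemma are met in exactly the form required, and then citing it.

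The first step is to fix the abstract rewriting system to which Newman's Lemma is applied. Since $\to$ rewrites derivations, and these are typed, the objects are terms $\Gamma \Vdash t : (U,B,W)$. We check that every generating rewrite of Figure~\ref{fig:rewrites} preserves the context $\Gamma$ and the type $(U,B,W)$, and that congruence closure does too. This is a routine inspection:
\begin{itemize}
\item for \textbf{[R0]} it uses the typing of \textsc{vsub};
\item for \textbf{[R1]}--\textbf{[R2]} it uses the fact that the matched boundary letter $A^\circ$ or $A^\bullet$ disappears from both neighbours at once;
\item for \textbf{[R3]}--\textbf{[R10]} the outer boundaries are just reassociated.
\end{itemize}
It follows that $\to$ restricts to a relation on each typed set of derivations. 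Newman's Lemma can then be applied to all of $T(\mathcal{M})$ at once, or fibrewise over each typing; the two readings give the same result.

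The second step is the standard argument itself, which we recall briefly in case a self-contained argument is wanted. We prove by well-founded induction on $\#(t)$ that every $t$ has the diamond property for $\stackrel{*}{\to}$. This induction is legitimate because $\#$ takes values in $\mathbb{N}$ and strictly decreases along $\to$.

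Suppose $t \stackrel{*}{\to} s_1$ and $t \stackrel{*}{\to} s_2$, and that both reductions are nontrivial. Write their first steps as $t \to t_1$ and $t \to t_2$.
\begin{enumerate}
\item Lemma~\ref{lem:local-confluence} joins $t_1$ and $t_2$ at some common reduct $u$.
\item The induction hypothesis at $t_1$, which is smaller than $t$, joins $s_1$ and $u$ at some $u'$.
\item The induction hypothesis at $t_2$ then joins $u'$ and $s_2$.
\end{enumerate}
This gives a common reduct of $s_1$ and $s_2$, as required.

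No step of this proof is a genuine obstacle, since all the difficult work is contained in Lemma~\ref{lem:local-confluence}, with its twelve critical pairs and the term-context machinery of Lemmas~\ref{lem:effectful-active} and~\ref{lem:boundary-multipop}. The only point needing care is the subtlety about variable names in binders. Rewrites such as \textbf{[R1]} and \textbf{[R8]}--\textbf{[R10]} and value substitution are defined on derivations, so terms are identified up to the choice of bound variable names. The joins produced by Lemma~\ref{lem:local-confluence} must therefore be read up to this $\alpha$-identification, and Newman's Lemma is then applied to the quotient. Since $\#$ is invariant under renaming, by Lemma~\ref{lem:subst-size} applied to variable-for-variable substitutions, the quotient relation is still terminating, and so the argument goes through unchanged.
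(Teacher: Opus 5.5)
Your proposal is correct and follows the paper's proof: the paper also obtains the theorem directly from termination (Corollary~\ref{lem:termination}) and local confluence (Lemma~\ref{lem:local-confluence}) by Newman's Lemma. Your additional material is extra care that the paper leaves implicit and does not change the argument. This covers the type-preservation check on the rewrites, the inlined proof of Newman's Lemma by induction on $\#$, and the remark about $\alpha$-renaming.
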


The reader may have noticed that our rewrite relation $\to$ and term contexts $\mathcal{L}$ are ``left-biased''. This is in service of confluence: we must pick which of the left-facing and right-facing puts gets and puts are to be resolved first when both are available. Of course, we could have made the opposite choice and chosen to work with a ``right-biased'' rewrite relation, in which case the appropriate analogue of term contexts is similarly dual. This also yields a confluent and terminating rewrite relation. While it does not really matter which system we choose, we must choose one, and have here chosen the left-biased version.

\section{A Virtual Double Category of Terms}\label{sec:vdc}

In this section we show that the terms of our calculus form a virtual double category when considered modulo the convertibility relation $\stackrel{*}{\leftrightarrow}$ induced by our term rewrite relation $\to$. This done, we show that our interpretation $\llbracket - \rrbracket$ of terms as cells of the free cornering defines a functor of virtual double categories. Our fixed multicategory $\mathcal{M}$ from Section~\ref{sec:term-calculus} remains fixed in this section.

The data of our virtual double category of terms is as follows:
\begin{definition}
  Define $[\mathcal{M}]$ to be the virtual double category that has one object, $*$, has horizontal morphisms $A : * \to *$ given by objects $A$ of $\mathcal{M}$, and with the category of vertical morphisms given by the monoid $\ex{\mathcal{M}}_0$, viewed as a category with one object. Cells $a : [\mathcal{M}]\cells{U}{A_1 ,\ldots,A_n}{B}{W}$ are derivations with conclusion $x_1 {:} A_1 \seq x_n {:} A_n \Vdash a : (U,B,W)$ (i.e., terms of $T(\mathcal{M})$). Identity cells $1_A : [\mathcal{M}]\cells{I}{A}{A}{I}$ are given by $x : A \Vdash [x] : (I,A,I)$. Composition is defined by:
  \[
    f \circ (g_1,\ldots,g_n) =
    \begin{cases}
      id_U(f) &\text{if }  (g_1,\ldots,g_n) = (\,)_U\\
      \textsf{let } x_1,\ldots,x_n \asn (g_1 \mid \cdots \mid g_n) \textsf{ in } f &\text{otherwise}
    \end{cases}
  \]
  where $id_U(f)$ is defined by induction on $U$ as in:
  \[
    id_U(f) =
    \begin{cases}
      f &\text{if } U = I\\
      \getl(x.\putr(x.id_{U'}(f))) &\text{if } U = A^\circ U'\\
      \getr(x.\putl(x.id_{U'}(f))) &\text{if } U = A^\bullet U'
    \end{cases}
  \]
\end{definition}

We must show that the data of $[\mathcal{M}]$ satisfies the axioms of a virtual double category. To do this, we require a number of technical lemmas concerning the behaviour of cells of the form $id_U(t)$. These show that $id_U(t)$ behaves like a ``horizontal identity'' when considered modulo our rewrite relation. First, we have:
\begin{lemma}\label{lem:id-pop}
  For all $U \in \ex{\mathcal{M}}_0$, we have:
  \[
    \textsf{let } x_1 \seq x_n \asn (id_U(t_1) \mid \cdots \mid id_U(t_n)) \textsf{ in } t
    \,\stackrel{*}{\leftrightarrow}\, 
    id_U(\textsf{let } x_1 \seq x_n \asn (t_1 \mid \cdots \mid t_n) \textsf{ in } t)
  \]
  for all terms $t,t_1,\ldots,t_n$ for which this makes sense.
\end{lemma}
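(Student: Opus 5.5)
We argue by induction on the length of $U$, exhibiting an explicit reduction from the left-hand side to a term of the form $id_{A^p}(\text{-})$ applied to a let-binding of shape covered by the induction hypothesis. We take $n \geq 1$, since the case of an empty input sequence is handled separately by the definition of composition. Since $\to$ is a congruence, the induction hypothesis can then be applied underneath the outer $\getl$/$\putr$ (or $\getr$/$\putl$) constructors.

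For the base case $U = I$ we have $id_I(t_i) = t_i$, and both sides are literally the same term. Before the inductive step, note the typing: if $t_i : (V_i, B_i, W_i)$ then $id_U(t_i) : (UV_i, B_i, UW_i)$. Well-typedness of the let-binding therefore forces $W_i = V_{i+1}$, so adjacent boundaries match, and the term on the right is also well-typed.

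Case $U = A^\circ U'$. The left-hand side is
\[
\textsf{let } \overline{x} \asn (\getl(y_1.\putr(y_1,id_{U'}(t_1))) \mid \cdots \mid \getl(y_n.\putr(y_n,id_{U'}(t_n)))) \textsf{ in } t .
\]
The reduction proceeds in three stages.
\begin{enumerate}
\item Apply \textbf{[R4]} to pull $\getl(y_1.-)$ out of the first position.
\item Position $1$ now holds $\putr(y_1,id_{U'}(t_1))$ and position $2$ holds $\getl(y_2.\putr(y_2,id_{U'}(t_2)))$. Apply \textbf{[R1]}, which substitutes $y_1$ for $y_2$. Since $y_2$ occurs in that subterm only as the argument of the $\putr$, the result is $\putr(y_1,id_{U'}(t_2))$, with $id_{U'}(t_2)$ untouched. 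Repeat this left to right along the sequence, using \textbf{[R1]} $n-1$ times in total.
\item The sequence is now $id_{U'}(t_1) \mid \cdots \mid id_{U'}(t_{n-1}) \mid \putr(y_1,id_{U'}(t_n))$. Apply \textbf{[R5]} to pull the final $\putr$ out.
\end{enumerate}
This yields
\[
\getl(y_1.\putr(y_1,\textsf{let } \overline{x} \asn (id_{U'}(t_1) \mid \cdots \mid id_{U'}(t_n)) \textsf{ in } t)).
\]
By the induction hypothesis and congruence, this term is convertible to $\getl(y_1.\putr(y_1,id_{U'}(\textsf{let } \overline{x} \asn (\overline{t}) \textsf{ in } t)))$, which is $id_U(\textsf{let } \overline{x} \asn (\overline{t}) \textsf{ in } t)$ up to renaming of the bound variable.

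Case $U = A^\bullet U'$. This is the mirror image, running right to left.
\begin{enumerate}
\item Apply \textbf{[R6]} to pull $\getr(y_n.-)$ out of the last position.
\item Let $\putl(y_n,\ldots)$ interact with the $\getr$ in position $n-1$ via \textbf{[R2]}, again only renaming the put argument. Repeat this leftwards along the sequence.
\item Finally apply \textbf{[R3]} to pull $\putl(y_n,-)$ out of the first position.
\end{enumerate}
This yields $\getr(y_n.\putl(y_n,\textsf{let } \overline{x} \asn (id_{U'}(\overline{t})) \textsf{ in } t))$, and the induction hypothesis finishes the case as before. When $n=1$, the interaction stage is vacuous and only the two hoisting rules are used.

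The main thing to check carefully is the substitution bookkeeping in the interaction stage. We must confirm that value substitution $[y_i/y_{i+1}]$ acts only on the $\putr$ argument, since $y_{i+1}$ is not free in $id_{U'}(t_{i+1})$. We must also confirm that the typing of each intermediate let-binding remains valid, so that \textbf{[R1]}/\textbf{[R2]} apply at each step. Interference from \textbf{[R7]}–\textbf{[R10]} is not an issue, because we only need to exhibit one reduction path, not analyse all of them.
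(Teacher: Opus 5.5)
Your proof is correct and follows the paper's argument. You use induction on $U$: for $U = A^\circ U'$ you apply \textbf{[R4]}, then \textbf{[R1]} repeatedly to relay the received value rightwards through the sequence, then \textbf{[R5]}, and close with the induction hypothesis under $\getl(x.\putr(x,-))$, with $U = A^\bullet U'$ the mirror image via \textbf{[R6]}, \textbf{[R2]}, \textbf{[R3]}. You spell out the substitution bookkeeping and the $A^\bullet$ case, which the paper only calls ``similar''. If you phrase the induction hypothesis with $\stackrel{*}{\to}$, the same argument gives the directed form that the paper later invokes in its proof of associativity.
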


Second, we have:
\begin{lemma}\label{lem:interact-across-id}
  We have:
  \begin{enumerate} 
  \item 
    $f \circ (\dots\! , \textsf{getR}(x.t) , id_{A^\bullet U}(h_1) \seq id_{A^\bullet U}(h_k) , \textsf{putL}(v,s) ,\! \dots)
    \stackrel{*}{\to} 
    f \circ (\dots\! , t[v/x] , id_{U}(h_1) \seq  id_{U}(h_k) , s , \! \dots)$
  \item 
    $f \circ (\dots \! , \textsf{putR}(v,t) , id_{A^\circ U}(h_1) \seq id_{A^\circ U}(h_k) , \textsf{getL}(x.s) ,\! \dots)
    \stackrel{*}{\to} 
    f \circ (\dots \! , t , id_{U}(h_1) \seq id_{U}(h_k) , s[v/x] ,\! \dots)$
  \end{enumerate}
  whenever these expressions make sense.
\end{lemma}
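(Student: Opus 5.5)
We prove part (1); part (2) is symmetric, with \textbf{[R1]} in place of \textbf{[R2]} and the $A^\circ$ clause of $id_U$ in place of the $A^\bullet$ clause. Write $f \circ (\ldots)$ as $\textsf{let } \overline{x} \asn (\overline{a} \mid \getr(x.t) \mid id_{A^\bullet U}(h_1) \mid \cdots \mid id_{A^\bullet U}(h_k) \mid \putl(v,s) \mid \overline{b}) \textsf{ in } f$. By definition, $id_{A^\bullet U}(h_i) = \getr(y_i.\putl(y_i,id_U(h_i)))$. The argument is by induction on $k$, passing the value $v$ leftward through the chain of identity cells one step at a time, each step an instance of \textbf{[R2]}. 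Since all the rewrites used are generating rules applied inside the let-binding, the congruence closure of $\to$ lets us work locally on adjacent pairs of inputs.

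If $k = 0$, the terms $\getr(x.t)$ and $\putl(v,s)$ are adjacent. A single application of \textbf{[R2]} gives $t[v/x]$ and $s$, which is the required reduct.

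If $k \geq 1$, the last identity cell $\getr(y_k.\putl(y_k,id_U(h_k)))$ is immediately followed by $\putl(v,s)$. Applying \textbf{[R2]} to this pair yields $\putl(y_k,id_U(h_k))[v/y_k]$ and $s$. Here we must check that value substitution gives $\putl(v,id_U(h_k)[v/y_k])$. Since $y_k$ is bound, it does not occur free in $h_k$, so $id_U(h_k)[v/y_k] = id_U(h_k)$. This holds because \textsc{vsub} on a variable $y_k$ absent from the context of $id_U(h_k)$ leaves the term unchanged, and on the put the substitution $y_k[v/y_k] = v$ is composition with an identity in $\mathcal{M}$. The sequence of inputs now reads
\[\getr(x.t), id_{A^\bullet U}(h_1) \seq id_{A^\bullet U}(h_{k-1}), \putl(v,id_U(h_k)), s.\]
This is again an instance of the statement with $k-1$ identity cells, where the trailing $\overline{b}$ is replaced by $id_U(h_k), s, \overline{b}$ and the same $v$ is sent. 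Applying the induction hypothesis turns it into $t[v/x], id_U(h_1)\seq id_U(h_{k-1}), id_U(h_k), s, \ldots$, as required. To keep the induction valid, the statement should be proved with $\overline{a}$ and $\overline{b}$ arbitrary, so that the hypothesis can absorb the changed tail.

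The only delicate point is bookkeeping rather than the rewriting itself. We must confirm that typing is preserved at each step: the right boundary of each intermediate cell drops from $A^\bullet U$ to $U$ exactly when its matching put is consumed, so the let stays well-typed. We must also confirm the substitution identity $\putl(y,id_U(h))[v/y] = \putl(v,id_U(h))$ from the explicit definition of \textsc{vsub}, including that $y$ is split off into the value part of the put and not into the continuation. Everything else is a direct use of \textbf{[R2]} (respectively \textbf{[R1]}) under the congruence.
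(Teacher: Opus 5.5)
Your proof is correct and is essentially the paper's own argument: induction on $k$, firing \textbf{[R2]} between the rightmost identity cell $\getr(y.\putl(y,id_U(h_k)))$ and $\putl(v,s)$ to get $\putl(v,id_U(h_k)), s$, and then applying the induction hypothesis with the tail absorbed into the ``$\ldots$''. For part (2), read ``symmetric'' as the mirror image: you peel the leftmost identity cell against $\putr(v,t)$ via \textbf{[R1]}, since a $\getl$ sitting next to $\getl(x.s)$ is not a redex.
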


Third and finally, we have:
\begin{lemma}\label{lem:pop-across-id}
  We have:
  \begin{enumerate}
  \item 
    $f \circ (id_{A^\bullet U}(h_1) \seq id_{A^\bullet U}(h_k) , \putl(v,t) , g_1 \seq g_n)
    \stackrel{*}{\to} 
    \putl(v,f \circ (id_{U}(h_1)  \seq id_{U}(h_k) , t , g_1 \seq g_n))$
  \item 
    $f \circ (id_{A^\circ U}(h_1) \seq id_{A^\circ U}(h_k) , \getl(x.t) , g_1 \seq g_n)
    \stackrel{*}{\to} 
    \getl(x.f \circ (id_{U}(h_1)  \seq  id_{U}(h_k) , t , g_1 \seq g_n))$
  \item 
    $f \circ (g_1 \seq g_n , \putr(v,t) , id_{A^\circ U}(h_{1}) \seq id_{A^\circ U}(h_k))
    \stackrel{*}{\to} 
    \putr(v,f \circ (g_1 \seq g_n , t , id_{U}(h_1) \seq id_{U}(h_k)))$
  \item 
    $f \circ (g_1 \seq g_n , \getr(x.t) , id_{A^\bullet U}(h_{1}) \seq id_{A^\bullet U}(h_k))
    \stackrel{*}{\to} 
    \getr(x.f \circ (g_1 \seq g_n , t , id_{U}(h_1)  \seq  id_{U}(h_k)))$
  \end{enumerate}
  whenever these expressions make sense.
\end{lemma}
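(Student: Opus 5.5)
We prove all four parts by induction on $k$, the number of $id$-cells the moving get or put must cross. Parts 3 and 4 are the same argument as parts 1 and 2 reflected left to right, using \textbf{[R5]}, \textbf{[R6]} in place of \textbf{[R3]}, \textbf{[R4]}. We treat part 1 in detail.

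We first unfold the definitions. When $k = 0$ and the argument list is nonempty, $f \circ (\putl(v,t), g_1 \seq g_n)$ is the let-binding $\textsf{let } \overline{x} \asn (\putl(v,t) \mid g_1 \mid \cdots \mid g_n) \textsf{ in } f$. A single \textbf{[R3]} step sends it to $\putl(v,\textsf{let } \overline{x} \asn (t \mid \overline{g}) \textsf{ in } f)$, which is the right-hand side. This settles the base case, and \textbf{[R4]} settles part 2 in the same way.

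For $k > 0$, the leftmost argument is $id_{A^\bullet U}(h_1) = \getr(y.\putl(y,id_U(h_1)))$. This is right-facing, so it cannot be lifted with \textbf{[R3]}; instead it has to interact with its right neighbour through \textbf{[R2]}. The natural strategy is to work from the inside out. First resolve the rightmost $id$-cell against $\putl(v,t)$ by \textbf{[R2]}: $\getr(y.\putl(y,id_U(h_k)))$ meets $\putl(v,t)$ and becomes $\putl(v,id_U(h_k))$ next to $t$. Now cell $k$ exposes a $\putl(v,-)$ on its left, and $id_{A^\bullet U}(h_{k-1})$ sits immediately to its left. Apply \textbf{[R2]} again, with the substitution $y \mapsto v$. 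Repeating this down to $h_1$ leaves $\putl(v,id_U(h_1))$ in front, followed by $id_U(h_2) \seq id_U(h_k), t, \overline{g}$. One final \textbf{[R3]} then lifts $\putl(v,-)$ out of the let.

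For this to go through we need a small substitution fact: $\putl(y,id_U(h))[v/y] = \putl(v,id_U(h))$. This holds because $y$ occurs only in the put and not in $id_U(h)$, so by Lemma~\ref{lem:subst-size}-style reasoning on the definition of value substitution the substitution does not reach $h$. An equivalent way to organise the argument is as a proper induction on $k$: strengthen the statement to an arbitrary prefix $\overline{a}$ of terms, and show $(\overline{a}, id(h_1) \seq id(h_k), \putl(v,t), \overline{g}) \stackrel{*}{\to} (\overline{a}, \putl(v,id_U(h_1)), id_U(h_2) \seq id_U(h_k), t, \overline{g})$ inside the let, using \textbf{[R2]} at each step. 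This intermediate statement is also essentially what Lemma~\ref{lem:interact-across-id} needs, so the two lemmas can share it.

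Part 2 is analogous but the signal travels the other way. The $id$-cells are $id_{A^\circ U}(h_i) = \getl(y.\putr(y,id_U(h_i)))$, and they are now leftmost. The first cell $\getl(y.\ldots)$ at position 1 can be lifted directly by \textbf{[R4]}, binding $y$. Inside the lifted body we are left with $\putr(y,id_U(h_1))$ followed by $id_{A^\circ U}(h_2) \seq \getl(x.t)$. Applying \textbf{[R1]} between $\putr(y,-)$ and $\getl(y'.\putr(y',-))$ passes $y$ rightward, and repeated \textbf{[R1]} steps eventually deliver it to $\getl(x.t)$, producing $t[y/x]$. Renaming the bound variable $y$ to $x$ gives exactly $\getl(x.f \circ (id_U(h_1) \seq id_U(h_k), t, \overline{g}))$.

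The main obstacle is bookkeeping rather than any genuine critical-pair argument. We have to track the splitting of the boundary words $U$ carefully, check at each intermediate stage that the let-binding is well typed (the adjacent boundaries must match, for example $A^\bullet U$ against $U$), and handle the variable renaming under \textbf{[R4]} and \textbf{[R6]}. All of the reductions used are plain forward rewrites, so confluence (Theorem~\ref{thm:confluence}) is not needed.
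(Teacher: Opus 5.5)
Your proof is correct and follows the paper's argument. The paper also inducts on $k$ with \textbf{[R3]}/\textbf{[R4]} as base case: in (i) it resolves the rightmost $id$-cell against $\putl(v,t)$ by one \textbf{[R2]} step and applies the hypothesis with $\putl(v,id_U(h))$ as the new put and $t$ prepended to the $g_i$; in (ii) it applies the hypothesis with the rightmost $id$-cell playing the $\getl$ and finishes with one \textbf{[R1]} step. Unrolled, these are exactly your chains of rewrites, and since the statement already quantifies over arbitrary $t$ and $g_1,\ldots,g_n$, your optional strengthening to an arbitrary prefix $\overline{a}$ is not needed.
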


Technical lemmas in hand, we proceed to show that composition in $[\mathcal{M}]$ is associative:
\begin{lemma}\label{lem:associative}
  We have:
  \[
    f \circ (g_1 \circ (h_1^1 \seq h_1^{m_1}) \seq g_n \circ (h_n^1 \seq h_n^{m_n}))
    \stackrel{*}{\leftrightarrow}
    (f \circ  (g_1 \seq g_n)) \circ (h_1^1\seq h_1^{m_1} \seq h_n^1 \seq h_n^{m_n})
  \]
  whenever these expressions make sense.
\end{lemma}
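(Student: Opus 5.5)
The plan is to compare normal forms. By Corollary~\ref{lem:termination} and Theorem~\ref{thm:confluence}, two terms are convertible exactly when they have the same normal form. Moreover $\stackrel{*}{\leftrightarrow}$ is a congruence, so each of $f$, $g_i$, $h_i^j$ can be replaced by its normal form without changing either side up to convertibility. By Lemma~\ref{lem:let-reduce}, normal forms contain no let-bindings. So I will assume throughout that every $f$, $g_i$ and $h_i^j$ is a let-free term: a block of $\putl/\getl$ constructors, then a block of $\putr/\getr$ constructors (\textbf{[R7--R10]} being unavailable), then a neutral term.

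I then argue by induction on $N$, the total number of get/put constructors in the $h_i^j$ (or on $\sum\#(h_i^j)$). First I dispose of the degenerate cases where some composite is empty. If $(h_i^1\seq h_i^{m_i})=()_U$, then $g_i\circ()=id_U(g_i)$, and Lemma~\ref{lem:id-pop} pushes $id_U$ through the outer let. If the whole family of $h$'s is empty, both sides reduce to $id_U(f\circ(g_1\seq g_n))$ by the same lemma.

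\emph{Base case: all $h_i^j$ neutral, $h_i^j=[v_i^j]$.} On the left, \textbf{[R0]} rewrites each $g_i\circ(\overline{h_i})$ to $g_i[\overline{v_i}/\overline{y_i}]$. On the right, \textbf{[R0]} gives $(\textsf{let } \overline{x}\asn(g_1\mid\cdots\mid g_n)\textsf{ in } f)[\overline{v}/\overline{y}]$. By the let-clause of value substitution, this equals $\textsf{let } \overline{x}\asn(g_1[\overline{v_1}/\overline{y_1}]\mid\cdots)\textsf{ in } f$. So the two sides coincide syntactically.

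\emph{Inductive step: some $h_i^j$ is not neutral.} Look at the outermost constructors of the flattened list $h_1^1\seq h_n^{m_n}$. Typing forces adjacent boundaries to match, so one of the following three cases must occur.
\begin{enumerate}
\item The first nonempty component begins with a left-facing constructor. Then \textbf{[R3]}/\textbf{[R4]} pops it out of the let on the right. On the left it is popped first out of the inner let and then out of the outer let, using Lemma~\ref{lem:pop-across-id} to pass any leading $id_U$ groups.
\item Symmetrically, the last component begins with a right-facing constructor, and \textbf{[R5]}/\textbf{[R6]} together with Lemma~\ref{lem:pop-across-id} pop it out on both sides.
\item Some right-facing $\putr$/$\getr$ is followed, up to components that are neutral with trivial boundary, by a matching $\getl$/$\putl$. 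If both lie in the same group $i$, \textbf{[R1]}/\textbf{[R2]} fires identically on both sides. If they lie in different groups, the right-hand side interacts directly. On the left I first pop the right-facing constructor out of $g_i\circ(\ldots)$ by \textbf{[R5]}/\textbf{[R6]}, and the left-facing one out of $g_{i'}\circ(\ldots)$ by \textbf{[R3]}/\textbf{[R4]}. They then interact at the outer level, via Lemma~\ref{lem:interact-across-id} when the intervening groups are empty ($id_U$ composites).
\end{enumerate}
In every case both sides are rewritten to instances of the same statement, possibly under a common $\putl/\getl/\putr/\getr$ prefix, with strictly smaller $N$. The induction hypothesis and congruence then finish the step.

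I expect the main obstacle to be the case analysis showing that one of the three cases always applies. This needs a pigeonhole argument on boundary types: no leading left-facing constructor means $U_0=I$, no trailing right-facing one means the last boundary is $I$, so some intermediate nonempty boundary has a right-facing producer on its left and a left-facing consumer on its right. A second source of difficulty is the bookkeeping on the left, where the popped constructor must travel across arbitrary neighbouring $g$-groups; this is exactly where Lemmas~\ref{lem:interact-across-id} and~\ref{lem:pop-across-id} are needed. Since only the $h$'s ever move, the $g$'s and $f$ are touched only at the final \textbf{[R0]} step.
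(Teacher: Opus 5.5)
Your argument is essentially the paper's proof. Both use induction on the size of the $h_i^j$, handle the all-empty family via Lemma~\ref{lem:id-pop}, and handle the all-neutral family via \textbf{[R0]} and the let-clause of value substitution. Both then split into a leading left-facing term, a trailing right-facing term, or an adjacent producer/consumer pair (within one group or across groups), discharged with \textbf{[R1]}--\textbf{[R6]} and Lemmas~\ref{lem:interact-across-id} and~\ref{lem:pop-across-id}. You normalize $f$, $g_i$, $h_i^j$ up front, whereas the paper has a subcase ``some $h_i^j$ is a let-binding, reduce it by Lemma~\ref{lem:let-reduce}''. That variation is harmless, since normality is preserved by passing to subterms and by value substitution.

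The step that fails as written is the exhaustiveness argument you single out as the main obstacle. In your left-biased normal forms, a term whose outermost constructor is left-facing may still contain right-facing constructors. An example is $\getl(y.\putr(w,[u]))$ with $y{:}A \vdash u : B$ and $w : C$, which has type $(A^\circ,B,C^\circ)$. So ``no trailing right-facing one means the last boundary is $I$'' is false, and the existence of a producer/consumer pair needs an explicit choice of boundary.

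Use the paper's choice instead, which works for any $h$'s that are not let-bindings. Take the least (in flattened order) left-facing $h$.
\begin{itemize}
\item If it is the first $h$, you are in case~1.
\item Otherwise its predecessor is not left-facing (by minimality), not neutral (its right boundary equals the nonempty left boundary of its successor), and not a let-binding. So it is right-facing, and typing forces it to be the matching $\getr$/$\putr$. This is case~3, with the two terms adjacent: nothing neutral ever sits between them, and on the left-hand side only empty groups $id_U(g_k)$ do.
\item If no $h$ is left-facing, the greatest right-facing $h$ must be the last one. A successor would have nonempty left boundary and hence be left- or right-facing, a contradiction. This is case~2.
\end{itemize}

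Two smaller slips:
\begin{itemize}
\item Lemma~\ref{lem:id-pop} only applies when \emph{every} group is empty. Isolated empty groups are handled by Lemmas~\ref{lem:interact-across-id} and~\ref{lem:pop-across-id}, as you in fact do later.
\item The case $n=0$ needs Lemma~\ref{lem:id-property}(2), $id_{UW}(f)=id_U(id_W(f))$, rather than Lemma~\ref{lem:id-pop}.
\end{itemize}
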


\noindent
This is shown by simultaneously reducing the term on the left and the right, dealing with any $g_i \circ () = id_U(g)$ using the technical lemmas.

It is easy to see that $[\mathcal{M}]$ satisfies the unitality axioms, and so we have:
\begin{theorem}\label{thm:vdc-structure}
  $[\mathcal{M}]$ is a virtual double category.
\end{theorem}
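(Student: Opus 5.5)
To prove Theorem~\ref{thm:vdc-structure}, I would verify the axioms of a virtual double category directly on $\stackrel{*}{\leftrightarrow}$-equivalence classes of terms. This means checking four things: that composition is well defined on equivalence classes, that it is associative, and that it is unital on both sides. The vertical category is the one-object category $\ex{\mathcal{M}}_0$, whose axioms are those of a monoid and hold trivially. Typing is a routine check from the \textsc{let} rule. For a nonempty composite, the boundaries $U_0V$ and $U_nW$ agree with the vertical composites required of a cell $f\circ(g_1\seq g_n)$. For $id_U(f)$ with $f : \cells{I}{}{B}{I}$ and a nullary input list, the result has type $\cells{U}{}{B}{U}$, which is exactly what composition with the empty family $(\,)_U$ must produce.

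The first step is well-definedness. Since $\to$ is a congruence, if $g_i \stackrel{*}{\leftrightarrow} g_i'$ and $f \stackrel{*}{\leftrightarrow} f'$, then $\textsf{let } \overline{x} \asn (\overline{g}) \textsf{ in } f \stackrel{*}{\leftrightarrow} \textsf{let } \overline{x} \asn (\overline{g'}) \textsf{ in } f'$. Similarly, $id_U(-)$ is built from term constructors, so it preserves $\stackrel{*}{\leftrightarrow}$. Associativity is precisely Lemma~\ref{lem:associative}, which covers all cases, including those where some inner composites $g_i \circ ()$ are of the form $id_U(g_i)$; I would simply cite it.

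The remaining work is unitality, which I would split into two cases.
\begin{itemize}
\item \textbf{Left unit.} For $1_B \circ (a) = \textsf{let } y \asn (a) \textsf{ in } [y]$, I must show this is convertible to $a$. I would argue by induction on $a$. If $a = [v]$, then \textbf{[R0]} gives $[y][v/y] = [v]$. If $a$ is left-facing, \textbf{[R3]} or \textbf{[R4]} pulls the get or put out, and the induction hypothesis applies to the body. If $a$ is right-facing, the same argument works with \textbf{[R5]} or \textbf{[R6]}, since $a$ is simultaneously the first and last element of the input sequence. If $a$ is a let-binding, I would instead use termination and Lemma~\ref{lem:let-reduce} to reduce $a$ to a normal form $a'$ containing no let-binding. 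Then $\textsf{let } y \asn (a) \textsf{ in } [y] \stackrel{*}{\to} \textsf{let } y \asn (a') \textsf{ in } [y]$, and I would run the induction on the normal form $a'$.
\item \textbf{Right unit.} For $a \circ (1_{A_1}\seq 1_{A_n}) = \textsf{let } \overline{x} \asn ([x_1]\mid\cdots\mid[x_n]) \textsf{ in } a$, a single application of \textbf{[R0]} gives $a[x_1\seq x_n/x_1\seq x_n]$. This equals $a$ by a straightforward induction on the definition of value substitution, using the identity law for the \textsc{comp} rule in $\mathcal{M}$.
\end{itemize}
The degenerate case $n = 0$ is also covered, since then the composite $a \circ ()$ is $id_I(a) = a$.

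The main obstacle I expect is the interaction of the nullary composite $id_U$ with the unit and associativity laws. In particular, the shape of $1_B \circ (g \circ ())$ involves $\textsf{let } y \asn (id_U(g)) \textsf{ in } [y]$. The left-unit induction above treats this term correctly, because $id_U(g)$ is built from gets and puts that \textbf{[R3]}–\textbf{[R6]} pull out. Even so, I would double-check that \textbf{[R3]}–\textbf{[R6]} alone, without needing \textbf{[R7]}–\textbf{[R10]}, pull out the alternating $\getl/\putr$ layers of $id_U(g)$ to yield exactly $id_U(g)$. If they do not, I would invoke Lemma~\ref{lem:id-pop} with $n=1$ to close the gap.
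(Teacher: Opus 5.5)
Your proposal is correct. Apart from your extra bookkeeping (well-definedness on $\stackrel{*}{\leftrightarrow}$-classes, which the paper leaves implicit, and the nullary right-unit case), it follows the paper's proof everywhere except in the let-binding case of the left unit law.

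There the paper stays within a plain structural induction. It writes $f = t \circ (t_1 \seq t_n)$ and uses Lemma~\ref{lem:associative} to rebracket $1_A \circ (t \circ (t_1\seq t_n))$ as $(1_A \circ (t)) \circ (t_1\seq t_n)$. It then applies the induction hypothesis to the body $t$.

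You instead normalise $a$. By termination, Lemma~\ref{lem:let-reduce}, and the fact that $\to$ is a congruence, the normal form $a'$ contains no let-binding, so only \textbf{[R0]} and \textbf{[R3]}--\textbf{[R6]} are needed. Your route makes the unit law independent of the associativity lemma and all its $id_U$ machinery, and it exhibits $a'$ explicitly as a common reduct of $1_B \circ (a)$ and $a$. The paper's route is shorter once associativity is already available.

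Since $a'$ is not a subterm of $a$, organise your argument in one of two ways:
\begin{itemize}
\item a structural induction over let-free terms, followed by the normalisation step; or
\item an induction on $\#(a)$, which strictly decreases along $a \stackrel{*}{\to} a'$ by Lemma~\ref{lem:decreasing} whenever $a$ is a let-binding.
\end{itemize}

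Your worry about $id_U(g)$ is unnecessary. It is an ordinary term whose outer layers are peeled off by \textbf{[R4]} then \textbf{[R5]}, or by \textbf{[R6]} then \textbf{[R3]}, so your general left-unit case already covers it.
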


We proceed to show that our interpretation of terms of $T(\mathcal{M})$ as cells of $\corner{F(\mathcal{M})}$ (Definition~\ref{def:interpretation}) defines a functor of virtual double categories from $[\mathcal{M}]$ into the underlying virtual double category of $\corner{F(\mathcal{M})}$. If $\X$ is a single-object double category with horizontal edge monoid $(\X_H,\otimes,I)$ then the virtual double category $U(\X)$ has cells $a : \cells{U}{A_1,\ldots,A_n}{B}{W}$ given by cells $a : \cells{U}{A_1 \otimes \cdots \otimes A_n}{B}{W}$ of $\X$, with composition defined in terms of vertical and horizontal composition in $\X$ as in $f \circ (g_1 \seq g_n) = (g_1 \mid \cdots \mid g_n) \cdot f$, and with identities given by vertical identities in $\X$ (see e.g.,~\cite{Leinster2004}).

We require one final technical lemma:
\begin{lemma}\label{lem:id-property}
  Let $U,W \in \ex{M}_0$, and let $t$ be a term. Then we have:
  \begin{enumerate}
  \item $\llbracket id_U(t) \rrbracket = id_U \cdot \llbracket t \rrbracket$
  \item $id_U(id_W(t)) = id_{UW}(t)$
  \end{enumerate}
\end{lemma}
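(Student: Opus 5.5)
Both parts go by induction on $U$, viewed as a word in $\ex{\mathcal{M}_0}$.

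For the second part, $id_U(id_W(t)) = id_{UW}(t)$, this is a syntactic identity that follows directly from the recursive definition of $id_U(-)$. If $U = I$, both sides are $id_W(t)$, since $IW = W$. If $U = A^\circ U'$, we have $id_U(id_W(t)) = \getl(x.\putr(x,id_{U'}(id_W(t))))$. By the induction hypothesis this equals $\getl(x.\putr(x,id_{U'W}(t)))$, and since $UW = A^\circ(U'W)$ this is $id_{UW}(t)$. The case $U = A^\bullet U'$ is identical, with $\getr$ and $\putl$ in place of $\getl$ and $\putr$.

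For the first part, the base case $U = I$ holds because $id_I = id_\lambda = 1_I$ is a vertical unit in $\corner{F(\mathcal{M})}$, so $id_I \cdot \llbracket t\rrbracket = \llbracket t\rrbracket$. Now let $U = A^\circ U'$, and suppose $t$ has context $\Gamma$, corresponding to the horizontal object $\Gamma$. Unfolding Definition~\ref{def:interpretation} for $\getl$ and then $\putr$ (where $v = x$, so $\corner{(x)} = 1_A$) gives
\[
\llbracket id_U(t)\rrbracket = (A\urcorner \mid 1_\Gamma)\cdot\bigl((1_\Gamma \mid A\llcorner)\cdot \llbracket id_{U'}(t)\rrbracket\bigr).
\]
By associativity of $\cdot$ and the interchange law, $(A\urcorner \mid 1_\Gamma)\cdot(1_\Gamma\mid A\llcorner)$ equals the horizontal composite of the column $A\urcorner \cdot 1_A \cdot A\llcorner$ (over $A^\circ$) with the column $1_\Gamma$. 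To make this precise, write $A\urcorner = A\urcorner\cdot id_\lambda$ and $1_\Gamma = id_\lambda \cdot 1_\Gamma$ using the unit laws, then interchange. The yanking equation $A\urcorner \cdot A\llcorner = id_{A^\circ}$ turns the first column into $id_{A^\circ}$. Since $id_{A^\circ}\mid 1_\Gamma = id_{A^\circ}\cdot 1_\Gamma$ (again by interchange and units), we obtain $\llbracket id_U(t)\rrbracket = id_{A^\circ}\cdot \llbracket id_{U'}(t)\rrbracket$. The induction hypothesis then gives $id_{A^\circ}\cdot id_{U'}\cdot\llbracket t\rrbracket = id_{U}\cdot\llbracket t\rrbracket$, using $id_U\cdot id_W = id_{UW}$. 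The case $A^\bullet U'$ is symmetric: the cells $A\ulcorner$ and $A\lrcorner$ appear, and we use $A\lrcorner\cdot A\ulcorner = id_{A^\bullet}$.

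The only real obstacle is the bookkeeping step that rearranges $(A\urcorner \mid 1_\Gamma)\cdot(1_\Gamma \mid A\llcorner)$ into a form where the yanking equation applies. The cells do not line up column-wise directly, so unit cells $id_\lambda$ and $1_I$ must be inserted before interchange can be used. This is easiest to see with the string diagrams of Figure~\ref{fig:interpretation}: a wire enters on the left, passes over $\Gamma$, and exits on the right, which is the snake $id_{A^\circ}\mid 1_\Gamma$. A further detail to check is that, for the $\putr$ case of the interpretation, the context of $\putr(x,\ldots)$ splits as $\Delta = \Gamma$ with $v = x : A$, so the cells have the right boundaries.
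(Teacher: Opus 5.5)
Your part 2, and the skeleton of part 1, are exactly the paper's argument. That skeleton is: induction on $U$, unfold $\getl$ then $\putr$, yank with $A\urcorner\cdot A\llcorner = id_{A^\circ}$, then apply the induction hypothesis and $id_{A^\circ}\cdot id_{U'} = id_{A^\circ U'}$.

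The trouble is the step you single out as the main obstacle: you resolve it with composites that do not exist. For nonempty $\Gamma$, the cell $A\urcorner\mid 1_\Gamma$ has bottom boundary $A\Gamma$, while $1_\Gamma\mid A\llcorner$ has top boundary $\Gamma A$. These are different words in the free monoid, so the vertical composite you want to rearrange is undefined, and no padding by unit cells followed by interchange can repair a boundary mismatch. Your ``snake'' $id_{A^\circ}\mid 1_\Gamma$ is itself ill-formed (right boundary $A^\circ$ against left boundary $\lambda$). The cell you are picturing, with the $A$ wire crossing the $\Gamma$ wires, would have type $\cells{A^\circ}{\Gamma}{\Gamma}{A^\circ}$ and would need a symmetry $A\otimes\Gamma\cong\Gamma\otimes A$, which $F(\mathcal{M})$ does not supply. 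Your padding equations also have mismatched vertical boundaries: $A\urcorner = A\urcorner\cdot id_\lambda$ always, and $1_\Gamma = id_\lambda\cdot 1_\Gamma$ whenever $\Gamma\neq I$. So the inductive step, in the generality you set up, does not go through.

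The missing observation is that $\Gamma$ is necessarily empty, so the obstacle never arises.
\begin{itemize}
\item \textsc{putr} places $x$ at the end of the context of $\putr(x,id_{U'}(t))$, while \textsc{getl} needs $x$ at the front, and the calculus has no exchange. Hence $\getl(x.\putr(x,id_{U'}(t)))$ is derivable only when $t$ has empty context; dually for $\getr(x.\putl(x,-))$.
\item This matches how $id_U(f)$ arises in $[\mathcal{M}]$: only as $f\circ(\,)_U$, where $f$ has no horizontal inputs.
\item The right-hand side $id_U\cdot\llbracket t\rrbracket$ of the statement, your base case $1_I\cdot\llbracket t\rrbracket=\llbracket t\rrbracket$, and your induction hypothesis all typecheck only when $\llbracket t\rrbracket$ has top boundary $I$.
\end{itemize}
With $\Gamma = I$ you have $1_I = id_\lambda$, which is a unit for $\mid$, so $A\urcorner\mid 1_I = A\urcorner$ and $1_I\mid(\corner{(x)}\cdot A\llcorner) = A\llcorner$. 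Therefore
\[ \llbracket id_U(t)\rrbracket = A\urcorner\cdot A\llcorner\cdot\llbracket id_{U'}(t)\rrbracket = id_{A^\circ}\cdot\llbracket id_{U'}(t)\rrbracket \]
with no interchange needed. This is precisely the paper's computation, which silently drops the $1_\Gamma$ factors for this reason. Delete the general-$\Gamma$ bookkeeping, state that $t$ has empty context, and your proof becomes the paper's.
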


Finally, we have:
\begin{theorem}\label{thm:functor}
  The interpretation $\llbracket - \rrbracket$ of terms defines a functor of virtual double categories:
  \[ \llbracket - \rrbracket : [\mathcal{M}] \to U\left(\corner{F(\mathcal{M})}\right) \]
\end{theorem}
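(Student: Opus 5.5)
To prove Theorem~\ref{thm:functor} we first fix the data of the functor and then check its axioms. On the single object and on vertical morphisms the functor is the identity, since both $[\mathcal{M}]$ and $U(\corner{F(\mathcal{M})})$ have vertical monoid $\ex{\mathcal{M}}_0$. A horizontal morphism $A$ is sent to the object $A$ of $F(\mathcal{M})$, viewed as a one-element sequence. A cell, that is, a $\stackrel{*}{\leftrightarrow}$-class of terms $x_1{:}A_1\seq x_n{:}A_n \Vdash a : (U,B,W)$, is sent to $\llbracket a \rrbracket : \cells{U}{A_1\otimes\cdots\otimes A_n}{B}{W}$. This is well defined on classes by Lemma~\ref{lem:interpretation-coherent}. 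It has the correct boundary by inspection of Definition~\ref{def:interpretation}: the left and right boundaries of each clause match the typing rules of Figure~\ref{fig:terms}.

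The axioms are preservation of identities and of composition. For identities, $\llbracket [x] \rrbracket = \corner{(1_A)} = 1_A$, and $1_A$ is precisely the identity cell of $U(\corner{F(\mathcal{M})})$. For composition, split on the two clauses defining $f \circ (g_1\seq g_n)$. In the generic case,
\[
\llbracket \textsf{let } x_1\seq x_n \asn (g_1\mid\cdots\mid g_n) \textsf{ in } f\rrbracket = (\llbracket g_1\rrbracket\mid\cdots\mid\llbracket g_n\rrbracket)\cdot\llbracket f\rrbracket,
\]
which holds directly by the let-clause of Definition~\ref{def:interpretation}. The right-hand side is the composite $\llbracket f\rrbracket \circ (\llbracket g_1\rrbracket\seq\llbracket g_n\rrbracket)$ in $U(\corner{F(\mathcal{M})})$.

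The remaining case is nullary composition with the empty sequence $(\,)_U$ of vertical morphisms. In $U(\corner{F(\mathcal{M})})$, composing $\llbracket f\rrbracket$ with the empty list at vertical edge $U$ means precomposing with the horizontal identity $id_U : \cells{U}{I}{I}{U}$, giving $id_U\cdot \llbracket f\rrbracket$. On the term side the composite is $id_U(f)$. Lemma~\ref{lem:id-property}(1) gives $\llbracket id_U(f)\rrbracket = id_U\cdot\llbracket f\rrbracket$, as required.

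The main obstacle is being careful about exactly what composition in $U(\corner{F(\mathcal{M})})$ means for the empty sequence, so that the nullary case matches, and about the tensor bookkeeping: the domain $A_1\otimes\cdots\otimes A_n$ of a composite must equal the concatenation of the domains of the $\llbracket g_i\rrbracket$. This holds because tensor in $F(\mathcal{M})$ is strict concatenation. Everything else reduces to Lemma~\ref{lem:interpretation-coherent} and Lemma~\ref{lem:id-property}.
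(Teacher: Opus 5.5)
Your proposal is correct and follows the paper's proof essentially step for step. It gets well-definedness on $\stackrel{*}{\leftrightarrow}$-classes from Lemma~\ref{lem:interpretation-coherent}, preserves identities via $\llbracket [x] \rrbracket = \corner{(1_A)} = 1_{(A)}$, and splits composition into the nullary case $f \circ (\,)_U = id_U(f)$ (Lemma~\ref{lem:id-property}(1)) and the $n \geq 1$ case, which is immediate from the let-clause of Definition~\ref{def:interpretation}. Your remarks on the action on objects and edges and on the tensor bookkeeping in $F(\mathcal{M})$ only make explicit what the paper leaves implicit.
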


\section{Concluding Remarks}\label{sec:conclusion}
We have introduced a calculus of interacting processes, consisting of a collection of terms together with a term rewrite relation. The calculus is parameterised by an arbitrary multicategory of non-interacting processes. We have shown that the calculus is confluent and terminating, and that terms of the calculus form a virtual double category when considered modulo the induced convertibility relation.

We imagine two primary directions for future work. First, we believe, but cannot yet prove, that the functor of virtual double categories given in Theorem~\ref{thm:functor} is faithful. If we think of the rewrite relation of our calculus in terms of operational semantics, then this functor gives a sound denotational semantics of our notion of process interaction. For the interpretation functor to be faithful is for our denotational semantics to be \emph{adequate} (see e.g.,~\cite{Cardone2021}). Adequacy is, essentially, the property that the denotational and operational semantics coincide in an appropriate sense. An adequate denotational semantics is much more useful than one that is merely sound, and as such we would like to be able to prove that the functor in question is faithful.

Second, process interaction in our calculus is governed by what are essentially session types. Viewed from this perspective, the calculus is missing certain features. For example, it is unable to expressing branching protocols. The free cornering construction has been extended with the ability to express branching protocols~\cite{Nester2024}, among other things, and we imagine that the ideas developed in that setting could be applied to our calculus in order to increase its expressiveness. We hope that a more expressive version of the calculus presented here could be used to reason about e.g., cryptographic protocols.

\bibliographystyle{./entics}
\bibliography{citations}

\appendix
\section{Proofs}\label{app:proofs}
\subsection{Lemma~\ref{lem:substitution-coherent}}
\begin{proof}
  We proceed by structural induction on $t$. The cases are as follows:
  \begin{itemize}
  \item If $t = [v]$ then we have:
    \begin{align*}
      & \llbracket \textsf{let } x_1 \seq x_n \asn ([v_1] \mid \cdots \mid [v_n]) \textsf{ in } [v] \rrbracket
        = ( \llbracket [v_1] \rrbracket \mid \cdots \mid \llbracket [v_n] \rrbracket ) \cdot \llbracket [(v)] \rrbracket
        = \left(\corner{(v_1)} \mid \cdots \mid \corner{(v_n)}\right) \cdot \corner{(v)}
      \\&= \left(\corner{(v_1)} \otimes \cdots \otimes \corner{(v_n)}\right) \cdot \corner{(v)}
      = \corner{(v_1\seq v_n)} \cdot \corner{(v)}
      = \corner{(v_1 \seq v_n)(v)}
      = \corner{(v \circ (v_1 \seq v_n))}
      \\&= \llbracket [v[v_1 \seq v_n/x_1 \seq x_n]] \rrbracket      
    \end{align*}
    That is, both sides of the desired equality denote the following cell:
    \[
      \includegraphics[height=1.2cm,align=c]{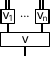} 
    \]
  \item If $t = \textsf{let } y_1 \seq y_m \asn (t_1 \mid \cdots \mid t_m) \textsf{ in } r$ then we have:
    \begin{align*}
      & \llbracket \textsf{let } x_1^1 \seq x_1^{k_1} \seq x_m^1  \seq x_m^{k_m} \asn ([v_1^1] \mid \cdots \mid [v_1^{k_1}] \mid \cdots \mid [v_m^1] \mid \cdots  \mid [v_m^{k_m}]) \textsf{ in } (\textsf{let } y_1 \seq y_m \asn (t_1 \mid \cdots \mid t_m) \textsf{ in } r) \rrbracket
      \\&= (\llbracket [v_1^1] \rrbracket \mid \cdots \mid \llbracket [v_1^{k_1}]\rrbracket \mid \cdots \mid \llbracket [v_m^{k_m}] \rrbracket \mid \cdots \mid \llbracket [v_m^{k_m}] \rrbracket) \cdot \llbracket \textsf{let } y_1 \seq y_m \asn (t_1 \mid \cdots \mid t_m) \textsf{ in } r) \rrbracket
      \\&= (\llbracket [v_1^1] \rrbracket \mid \cdots \mid \llbracket [v_1^{k_1}]\rrbracket \mid \cdots \mid \llbracket [v_m^{k_m}] \rrbracket \mid \cdots \mid \llbracket [v_m^{k_m}] \rrbracket) \cdot (\llbracket t_1 \rrbracket \mid \cdots \mid \llbracket t_m \rrbracket) \cdot \llbracket r \rrbracket
      \\&= (((\llbracket [v_1^1] \rrbracket \mid \cdots \mid \llbracket [v_1^{k_1}] \rrbracket) \cdot \llbracket t_1 \rrbracket) \mid \cdots \mid ((\llbracket [v_m^1] \rrbracket \mid \cdots \mid \llbracket [v_m^{k_m}] \rrbracket) \cdot \llbracket t_m \rrbracket)) \cdot \llbracket r \rrbracket
      \\&= (\llbracket \textsf{let } x_1^1 \seq x_1^{k_1} \asn ([v_1^1] \mid \cdots \mid [v_1^{k_1}]) \textsf{ in } t_1 \rrbracket \mid \cdots \mid \llbracket \textsf{let } x_m^1 \seq x_m^{k_m} \asn ([v_m^1] \mid \cdots \mid [v_m^{k_m}]) \textsf{ in } t_m \rrbracket) \cdot \llbracket r \rrbracket
      \\&= (\llbracket t_1[v_1^1 \seq v_1^{k_1}/x_1^1 \seq x_1^{k_1}] \rrbracket \mid \cdots \mid \llbracket t_m[v_m^1 \seq v_m^{k_m} / x_m^1 \seq x_m^{k_m}] \rrbracket) \cdot \llbracket r \rrbracket
      \\&= \llbracket \textsf{let } y_1\seq y_m \asn (t_1[v_1^1 \seq v_1^{k_1}/x_1^1\seq x_1^{k_1}] \mid \cdots \mid t_m[v_m^1 \seq v_m^{k_m} / x_m^1 \seq x_m^{k_m}]) \textsf{ in } r \rrbracket
    \end{align*}
    That is, both sides of the desired equality denote the following cell:
    \[
       \includegraphics[height=1.65cm,align=c]{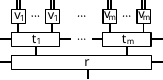} 
    \]
  \item If $t = \putr(v,t')$ then we have: 
    \begin{align*}
      & \llbracket \textsf{let } x_1 \seq x_n,y_1\seq y_m \asn ([v_1] \mid \cdots \mid [v_n] \mid [u_1] \mid \cdots \mid [u_m]) \textsf{ in } \putr(v,t')\rrbracket
      \\&= (\llbracket [v_1] \rrbracket \mid \cdots \mid \llbracket [v_n] \rrbracket \mid \llbracket [u_1] \rrbracket \mid \cdots \mid \llbracket [u_m] \rrbracket) \cdot \llbracket \putr(v,t') \rrbracket
      \\&= (\llbracket [v_1] \rrbracket \mid \cdots \mid \llbracket [v_n] \rrbracket \mid \llbracket [u_1] \rrbracket \mid \cdots \mid \llbracket [u_m] \rrbracket) \cdot (1 \mid (\corner{(v)} \cdot A\llcorner)) \cdot \llbracket t' \rrbracket
      \\&= ((\llbracket [v_1] \rrbracket \mid \cdots \llbracket [v_n] \rrbracket) \cdot \llbracket t' \rrbracket) \mid ((\llbracket [u_1] \rrbracket \mid \cdots \mid \llbracket [u_m] \rrbracket) \cdot \corner{(v)} \cdot A\llcorner)
      \\&= \llbracket t'[v_1 \seq v_n/x_1 \seq x_n] \rrbracket \mid (\corner{( v[u_1\seq u_m / y_1 \seq y_m] )} \cdot A\llcorner)
      \\&= (1 \mid (\corner{( v[u_1\seq u_m / y_1 \seq y_m] )} \cdot A\llcorner)) \mid \llbracket t'[v_1 \seq v_n / x_1 \seq x_n] \rrbracket 
      \\&= \llbracket \putr(v[u_1\seq u_m/y_1\seq y_m],t'[v_1 \seq v_n/x_1 \seq x_n]) \rrbracket
      \\&= \llbracket \putr(v,t')[v_1\seq v_n,u_1 \seq u_m/x_1 \seq x_n,y_1 \seq y_m] \rrbracket
    \end{align*}
    That is, both sides of the desired equality both denote the following cell:
    \[
      \includegraphics[height=2cm,align=c]{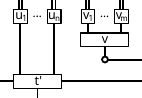} 
    \]
  \item If $t = \putl(v,t')$, a similar argument to the one above suffices.
  \item If $t = \getr(x.t')$, then we have:
    \begin{align*}
      & \llbracket \textsf{let } x_1 \seq x_n \asn ([v_1] \mid \cdots \mid [v_n]) \textsf{ in } \getr(x.t') \rrbracket
        = (\llbracket [v_1] \rrbracket \mid \cdots \mid \llbracket [v_n] \rrbracket ) \cdot \llbracket \getr(x.t') \rrbracket
      \\&= (\llbracket [v_1] \rrbracket \mid \cdots \mid \llbracket [v_n] \rrbracket ) \cdot (1 \mid A\ulcorner) \cdot \llbracket t' \rrbracket
      = \corner{(v_1\seq v_n)} \cdot (1 \mid A\ulcorner) \cdot \llbracket t' \rrbracket
      \\&= (1 \mid A\ulcorner) \cdot \corner{(v_1 \seq v_n,1_A)} \cdot \llbracket t' \rrbracket
      = (1 \mid A\ulcorner) \cdot \llbracket \textsf{let } x_1 \seq x_n,x \asn ([v_1] \mid \cdots [v_n] \mid [x]) \textsf{ in } t' \rrbracket
      \\&= (1 \mid A\ulcorner) \cdot \llbracket t'[v_1\seq v_n,x / x_1 \seq x_n,x] \rrbracket
      = \llbracket \getr(x.t'[v_1 \seq v_n,x / x_1 \seq x_n,x]) \rrbracket
      \\&= \llbracket \getr(x.t')[v_1 \seq v_n / x_1 \seq x_n]\rrbracket
    \end{align*}
    That is, both sides of the desired equality denote the following cell:
    \[
      \includegraphics[height=1.2cm,align=c]{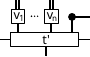} 
    \]
  \item If $t = \getl(x.t')$, a similar argument to the one above suffices.
  \end{itemize}
  The claim follows.
\end{proof}

\subsection{Lemma~\ref{lem:interpretation-coherent}}
\begin{proof}
  It suffices to show that $\llbracket s \rrbracket = \llbracket s' \rrbracket$ for each of our generating rewrites \textbf{[RN]} $s \to s'$. Lemma~\ref{lem:substitution-coherent} shows that this is the case for \textbf{[R0]}. Moving on to the rest of the rewrite rules, for \textbf{[R1]} we have:
  \[ \llbracket \textsf{let } x_1 \seq x_n \asn (t_1 \mid \cdots \mid \putr(v,t) \mid \getl(y.s) \mid \cdots t_n) \textsf{ in } r \rrbracket = \llbracket \textsf{let } x_1 \seq x_n \asn (t_1 \mid \cdots \mid t \mid s[v/y] \mid \cdots \mid t_n) \textsf{ in } r \rrbracket\]
  as in:
  \begin{mathpar}
    \includegraphics[height=2cm,align=c]{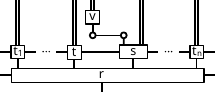} 
    \hspace{0.3cm}
    =
    \hspace{0.3cm}
    \includegraphics[height=2cm,align=c]{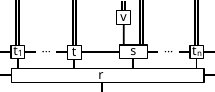} 
  \end{mathpar}
  The case for \textbf{[R2]} is similar. For \textbf{[R3]}, both $\llbracket \textsf{let } x_1 \seq x_n \asn (\putl(v,t) \mid \cdots \mid t_n) \textsf{ in } r \rrbracket$ and $\llbracket \putl(v,\textsf{let } x_1 \seq x_n \asn (t \mid \cdots \mid t_n) \textsf{ in } r) \rrbracket$ are equal to:
  \[
    \includegraphics[height=2cm,align=c]{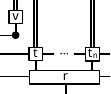} 
  \]
  For \textbf{[R4]}, both $\llbracket \textsf{let } x_1 \seq x_n \asn (\getl(x.t) \mid \cdots \mid t_n) \textsf{ in } r \rrbracket$ and $\llbracket \getl(x.\textsf{let } x_1 \seq x_n \asn (t \mid \cdots \mid t_n) \textsf{ in } r) \rrbracket$ are equal to:
  \[
    \includegraphics[height=1.634cm,align=c]{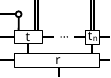} 
  \]
  The case for \textbf{[R5]} is similar to \textbf{[R3]}, and the case for \textbf{[R6]} is similar to \textbf{[R4]}. For \textbf{[R7]}, both $\llbracket \putr(v,\putl(w,t)) \rrbracket$ and $\llbracket \putl(w,\putr(v,t)) \rrbracket$ are equal to:
  \[
    \includegraphics[height=1.5cm,align=c]{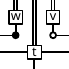} 
  \]
  The cases for \textbf{[R8]}, \textbf{[R9]}, and \textbf{[R10]} are similar, and the claim follows.
\end{proof}

\subsection{Lemma~\ref{lem:subst-size}}
\begin{proof}
  By structural induction on $t$. The cases are as follows:
  \begin{itemize}
  \item If $t = [v]$ then $\#([v]) = 2 = \#([v[v_1\seq v_n/x_1 \seq x_n]]) = \#([v][v_1 \seq v_n / x_1 \seq x_n])$.
  \item If $t = \textsf{let } y_1 \seq y_m \asn (t_1 \mid \cdots \mid t_n) \textsf{ in } r$ then we have:
    \begin{align*}
      & \#((\textsf{let } y_1 \seq y_m \asn (t_1 \mid \cdots \mid t_n) \textsf{ in } r)[\overline{v_1}\seq \overline{v_n} / \overline{x_1} \seq \overline{x_n}])
      \\&=  \#(\textsf{let } y_1 \seq y_m \asn (t_1[\overline{v_1}/\overline{x_1}] \mid \cdots \mid t_n[\overline{v_n}/\overline{x_n}]) \textsf{ in } r)
      = 2 \cdot \#(t_1[\overline{v_1}/\overline{x_1}]) \cdots \#(t_n[\overline{v_n}/\overline{x_n}]) \cdot \#(r)
      \\&=  2 \cdot \#(t_1) \cdots \#(t_n) \cdot \#(r)
      = \#(\textsf{let } y_1 \seq y_m \asn (t_1 \mid \cdots \mid t_n) \textsf{ in } r)
    \end{align*}
  \item If $t = \putr(v,t')$ then we have:
    \begin{align*}
      & \#(\putr(v,t')[\overline{v_1},\overline{v_2}/\overline{x_1},\overline{x_2}])
        = \#(\putr(v[\overline{v_2}/\overline{x_2}],t'[\overline{v_1}/\overline{x_1}]))
        \\&= 1 + (2 \cdot \#(t'[\overline{v_1}/\overline{x_1}]))
        = 1 + (2 \cdot \#(t'))
        = \#(\putr(v,t'))
    \end{align*}
  \item The case where $t = \putl(v,t')$ is similar.
  \item If $t = \getl(y.t')$ then we have:
    \begin{align*}
      & \#(\getl(y.t')[\overline{v}/\overline{x}])
        = \#(\getl(y.t'[\overline{v}/\overline{x}]))
        = 2 + (2 \cdot \#(t'[\overline{v}/\overline{x}]))
        = 2 + (2 \cdot \#(t'))
        = \#(\getl(y.t'))
    \end{align*}
  \item The case where $t = \getr(y.t')$ is similar.
  \end{itemize}
  The claim follows.
\end{proof}

\subsection{Lemma~\ref{lem:decreasing}}
\begin{proof}
  For \textbf{[R0]} Lemma~\ref{lem:subst-size} gives:
  \begin{align*}
    & \#(\textsf{let } x_1 \seq x_n \asn ([v_1]\mid \cdots \mid [v_n]) \textsf{ in } t)
      = 2 \cdot \#([v_1]) \cdots \#([v_n]) \cdot \#(t)
      = 2^{n+1} \cdot \#(t)
      \\&> \#(t)
      = \#(t[v_1 \seq v_n/x_1 \seq x_n])
  \end{align*}
  For \textbf{[R1]} we have:
  \begin{align*}
    & \#(\textsf{let } x_1 \seq x_n \asn (\overline{a} \mid \putr(v,t) \mid \getl(y.s) \mid \overline{b}) \textsf{ in } r)
      = 2 \cdot \#(\overline{a}) \cdot \#(\putr(v,t)) \cdot \#(\getl(y.s)) \cdot \#(\overline{b}) \cdot \#(r)
    \\&= 2 \cdot \#(\overline{a}) \cdot (1 + (2 \cdot \#(t))) \cdot (2 + (2 \cdot \#(s))) \cdot \#(\overline{b}) \cdot \#(r)
    > 2 \cdot \#(\overline{a}) \cdot \#(t) \cdot \#(s) \cdot \#(\overline{b}) \cdot \#(r)
    \\& = 2 \cdot \#(\overline{a}) \cdot \#(t) \cdot \#(s[v/y]) \cdot \#(\overline{b}) \cdot \#(r)
    = \#(\textsf{let } x_1 \seq x_n \asn (\overline{a} \mid t \mid s[v/y] \mid \overline{b}) \textsf{ in } r)
  \end{align*}
  The case concerning \textbf{[R2]} is similar. For \textbf{[R3]} we have:
  \begin{align*}
    & \#(\textsf{let } x_1 \seq x_n \asn (\putl(v,t) \mid \overline{a}) \textsf{ in } r)
      = 2 \cdot \#(\putl(v,t)) \cdot \#(\overline{a}) \cdot \#(r)
    \\& = 2 \cdot (2+(2\cdot \#(t))) \cdot \#(\overline{a}) \cdot \#(r)
    > 2 + (2 \cdot \#(t) \cdot \#(\overline{a}) \cdot \#(r))
    \\&= 2 + (2 \cdot \#(\textsf{let } x_1 \seq x_n \asn (t \mid \overline{a}) \textsf{ in } r))
    = \#(\putl(v,\textsf{let } x_1 \seq x_n \asn (t \mid \overline{a}) \textsf{ in } r)
  \end{align*}
  The cases concerning \textbf{[R4]}, \textbf{[R5]}, and \textbf{[R6]} are similar. For \textbf{[R7]} we have:
  \begin{align*}
    & \#(\putr(v,\putl(w,t)))
      = 1 + (2 \cdot \#(\putl(w,t)))
      = 1 + (2 \cdot (2 + (2 \cdot \#(t))))
      = 1 + 4 + (4 \cdot \#(t))
      \\&=  5 + (4 \cdot \#(t))
    > 4 + (4 \cdot \#(t))
    = 2 + 2 + (4 \cdot \#(t))
    = 2 + (2 \cdot (1 + (2 \cdot \#(t))))
    \\&= 2 + (2 \cdot \#(\putr(v,t)))
    = \#(\putl(w,\putr(v,t)))
  \end{align*}
  The cases concerning \textbf{[R8]}, \textbf{[R9]}, and \textbf{[R10]} are similar. 
  
  Note finally that decreasing the size of a subterm reduces the size of the whole term, and hence when reducing a subterm the size also decreases. The claim follows.
\end{proof}

\subsection{Lemma~\ref{lem:let-reduce}}
\begin{proof} 
  We proceed by induction on term size. The base case is when the size of the let-binding is $0$, but no term has size zero, so the claim holds vacuously. For the inductive case, suppose $t = \textsf{let } x_1 \seq x_n \asn (t_1 \mid \cdots \mid t_n) \textsf{ in } r$, and suppose that for any let-binding $t'$ such that $\#(t') < \#(t)$, $t'$ reduces. We consider a number of cases:
  \begin{itemize}
  \item If any $t_1 \seq t_n$ is a let-binding then by the inductive hypothesis that term reduces, and so does $t$. We may therefore assume that no $t_i$ is a let-binding in the remaining cases.
  \item If $t_i = [v_i]$ for all $1 \leq i \leq n$ then $t$ reduces via \textbf{[R0]}.
  \item If at least one of the $t_i$ is left-facing, let $i$ be the smallest index $1 \leq i \leq n$ for which this is so. If $i = 1$, then $t$ reduces via \textbf{[R3]} or \textbf{[R4]}. If $i > 1$, then $t_{i-1}$ cannot be left-facing since $i - 1 < i$, cannot be neutral due to typing constraints, and cannot be a let-binding by assumption. It follows that $t_{i-1}$ must be right-facing. Typing constraints ensure that if $t_i = \putl(v.t_i')$ then $t_{i-1} = \getr(x.t_{i-1}')$ so that $t$ reduces via \textbf{[R2]}, and that if $t_i = \getl(x.t_i')$ then $t_{i-1} = \putr(v,t_{i-1}')$ so that $t$ reduces via \textbf{[R1]}.
  \item Finally, if none of $t_1 \seq t_n$ are left-facing, then we must have that at least one of the $t_i$ is right-facing. Let $i$ be the largest index $1 \leq i \leq n$ for which this is so. If $i = n$ then $t$ reduces via \textbf{[R5]} or \textbf{[R6]}. The case where $i < n$ is impossible: $t_{i+1}$ cannot be neutral due to typing constraints, cannot be right-facing since $i+1 > i$, and cannot be left-facing or a let-binding by assumption. There are no other terms. 
  \end{itemize}
  The claim follows.
\end{proof}

\subsection{Lemma~\ref{lem:effectful-active}}
\begin{proof}
  We proceed by induction on term context size. The base case is when the size if zero, but clearly no term context has size zero so the claim holds vacuously. For the inductive case, let $\mathcal{L}$ be a term context, and suppose the claim holds for each term context $\mathcal{L}'$ with $\#(\mathcal{L}') < \#(\mathcal{L})$. Let $s(\mathcal{L}) = a_1 \seq a_k$. We consider a number of cases. If $\mathcal{L} = \putl(v,\mathcal{L}')$ then $\mathcal{L}'$ is also an effectful term context and we have $\#(\mathcal{L}') < \#(\mathcal{L})$, so the inductive hypothesis gives an active term context $\mathcal{L}''$ such that $\mathcal{L}' \stackrel{*}{\to} \mathcal{L}''$. But then $\putl(v,\mathcal{L}'')$ is also an active term context and we have $\mathcal{L} = \putl(v,\mathcal{L}') \stackrel{*}{\to} \putl(v,\mathcal{L}'')$ and the claim holds. The case in which $\mathcal{L} = \getl(x.\mathcal{L}')$ is similar. Only the case where $\mathcal{L} = \textsf{let } x_1 \seq x_n \asn (a_1 \mid \cdots \mid a_k \mid \blacksquare) \textsf{ in } r$ remains. We consider a number of subcases:
  \begin{itemize}
  \item If any of the $a_i$ is a let-binding then Lemma~\ref{lem:let-reduce} tells us that we have $a_i \to a_i'$ for some $a_i'$. Then $\mathcal{L} \to \mathcal{L'}$ where $\mathcal{L'}$ is the term context obtained from $\mathcal{L}$ by replacing $a_i$ with $a_i'$. Lemma~\ref{lem:decreasing} gives that $\#(a_i) > \#(a_i')$, which means $\#(\mathcal{L}) > \#(\mathcal{L}')$, so our inductive hypothesis gives $\mathcal{L} \to \mathcal{L}' \stackrel{*}{\to} \mathcal{L}''$ for some active $\mathcal{L}''$. We assume that no $a_i$ is a let-binding in the remaining cases. 
    
  \item If at least one of the $a_i$ is left-facing, let $i$ be the smallest index $1 \leq i \leq n$ for which this is so. If $i = 1$ then $\mathcal{L}$ reduces as follows: Say $a_1 = \putl(v,a)$, then we have:
    \begin{align*}
      & \mathcal{L}[t_1\seq t_m]
        = \textsf{let } x_1 \seq x_n \asn (\putl(v,a) \mid \cdots \mid a_k \mid t_1 \mid \cdots \mid t_m)\textsf{ in } r
      \\& \to \putl(v, \textsf{let } x_1 \seq x_n \asn (a \mid \cdots \mid a_k \mid t_1 \mid \cdots \mid t_m) \textsf{ in } r)
    \end{align*}
    Notice that the effectful term context $\mathcal{L}' = \textsf{let } x_1 \seq x_n \asn (a \mid \cdots \mid a_k \mid \blacksquare) \textsf{ in } r$ has $\#(\mathcal{L}') < \#(\mathcal{L})$ and so the inductive hypothesis gives an active $\mathcal{L}''$ such that $\mathcal{L}' \stackrel{*}{\to} \mathcal{L}''$. Now $\putl(v,\mathcal{L}'')$ is also active and we have $\mathcal{L} \to \putl(v,\mathcal{L}') \to \putl(v,\mathcal{L}'')$, so the claim holds. The case where $a_1 = \getl(x.a)$ is similar. If $i > 1$ then $a_{i-1}$ must be right-facing: It cannot be left-facing since $i-1 < i$, cannot be neutral due to typing constraints, and cannot be a let-binding by assumption. In particular if $a_i = \putl(v,a)$ then typing constraints ensure that $a_{i-1} = \getr(x.b)$ and we have:
    \begin{align*}
      & \mathcal{L}[t_1 \seq t_m]
        = \textsf{let } x_1 \seq x_n \asn (a_1 \mid \cdots \mid \getr(x.b) \mid \putl(v,a) \mid \cdots \mid a_k \mid t_1 \mid \cdots \mid t_m) \textsf{ in } r
        \\& \to \textsf{let } x_1 \seq x_n \asn (a_1 \mid \cdots \mid b[v/x] \mid a \mid \cdots \mid a_k \mid t_1 \mid \cdots \mid t_m) \textsf{ in  } r
    \end{align*}
    Now $\mathcal{L}' = \textsf{let } x_1 \seq x_n \asn (a_1 \mid \cdots \mid b[v/x] \mid a \mid \cdots \mid a_k \mid \blacksquare) \textsf{ in  } r$ is effectful and certainly $\#(\mathcal{L}') < \#(\mathcal{L})$ so the inductive hypothesis gives active $\mathcal{L}''$ such that $\mathcal{L}' \stackrel{*}{\to} \mathcal{L}''$. Then we have $\mathcal{L} \to \mathcal{L}' \stackrel{*}{\to} \mathcal{L}''$, so the claim holds. The vase there $a_i = \getl(x.a)$ is similar.
  \item If no $a_i$ is left-facing, let $i$ be the greatest index for which $a_i$ is right-facing. If $i = k$ then $\mathcal{L}$ is already active. The case where $i < k$ is impossible: $a_{i+1}$ cannot be right-facing because $i+1 > i$, cannot be neutral due to typing constraints, and cannot be left-facing or a let-binding by assumption. 
  \end{itemize}
  Note that since $\mathcal{L}$ is effectful $a_k$ cannot be neutral, so we need not consider the case in which the $a_i$ are all neutral. The claim follows.
\end{proof}

\subsection{Lemma~\ref{lem:boundary-multipop}}

\begin{proof}
  The proofs of (\ref{sublem:boundary-multipop-first}) and (\ref{sublem:boundary-multipop-second}) are similar. We give the proof of (\ref{sublem:boundary-multipop-first}). We proceed by induction on the form of $\mathcal{L}$. The cases are:
  \begin{itemize}
  \item If $\mathcal{L} = \textsf{let } x_1 \seq x_n \asn (\overline{a} \mid \blacksquare) \textsf{ in } r$ then we have:
    \begin{align*}
      & \mathcal{L}[\overline{b} , \putr(v,t)]
        = \textsf{let } x_1 \seq x_n \asn (\overline{a} \mid \overline{b} \mid \putr(v,t)) \textsf{ in } r
        \\& \to \putr(v,\textsf{let } x_1 \seq x_n \asn (\overline{a} \mid \overline{b} \mid t) \textsf{ in } r)
        = \putr(v,\mathcal{L}[\overline{b} , t])
    \end{align*}
  \item If $\mathcal{L} = \putl(u,\mathcal{L}')$ and the claim holds for $\mathcal{L}'$ then we have:
    \begin{align*}
      & \mathcal{L}[\overline{b} , \putr(v,t)]
        = \putl(u,\mathcal{L}'[\overline{b} , \putr(v,t)])
        \stackrel{*}{\to} h
        \stackrel{*}{\from} \putl(u,\putr(v,\mathcal{L}'[\overline{b} , t]))
        \\& \from \putr(v,\putl(u,\mathcal{L}'[\overline{b} , t]))
        = \putr(v,\mathcal{L}[\overline{b} , t])
    \end{align*}
  \item If $\mathcal{L} = \getl(x.\mathcal{L}')$ and the claim holds for $\mathcal{L}'$ then we have:
    \begin{align*}
      & \mathcal{L}[\overline{b} , \putr(v,t)]
        = \getl(x.\mathcal{L}'[\overline{b} , \putr(v,t)])
        \stackrel{*}{\to} h
        \stackrel{*}{\from} \getl(x.\putr(v,\mathcal{L}'[\overline{b} , t]))
      \\& \from \putr(v,\getl(x.\mathcal{L}'[\overline{b} , t]))
      = \putr(v,\mathcal{L}[\overline{b} , t])
    \end{align*}
    The claim follows.
    
  \end{itemize}
\end{proof}

\subsection{Lemma~\ref{lem:local-confluence}}\label{app:local-confluence}
\begin{proof}
  There are twelve classes of critical pairs. We number them according to which rewrite rules they arise from, so that for example critical pairs of the form (2-5) arise from the interaction of rewrite rule \textbf{[R2]} and \textbf{[R5]}. We arrange the twelve classes of critical pair into three groups. The first group of critical pairs is:
  \begin{itemize}
  \item[] (1-7): The term
    \[ \textsf{let } x_1 \seq x_n \asn (\overline{a} \mid \putr(v,\putl(u,t)) \mid \getl(x.s) \mid \overline{b}) \textsf{ in } r \]
    rewrites via \textbf{[R1]} to:
    \[ \textsf{let } x_1 \seq x_n \asn (\overline{a} \mid \putl(u,t) \mid s[v/x] \mid \overline{b}) \textsf{ in } r \]
    but also rewrites via \textbf{[R2]} to:
    \[ \textsf{let } x_1 \seq x_n \asn (\overline{a} \mid \putl(u,\putr(v,t)) \mid \getl(x.s) \mid \overline{b}) \textsf{ in } r \]
  \item[] (1-8): The term
    \[ \textsf{let } x_1 \seq x_n \asn (\overline{a} \mid \putr(v,\getl(y.t)) \mid \getl(x.s) \mid \overline{b}) \textsf{ in }r \]
    rewrites via \textbf{[R1]} to:
    \[ \textsf{let } x_1 \seq x_n \asn (\overline{a} \mid \getl(y.t) \mid s[v/x] \mid \overline{b}) \textsf{ in } r \]
    but also rewrites via \textbf{[R8]} to:
    \[ \textsf{let } x_1 \seq x_n \asn (\overline{a} \mid \getl(y.\putr(v,t)) \mid \overline{b}) \textsf{ in } r \]
  \item[] (2-9): The term
    \[ \textsf{let } x_1 \seq x_n \asn (\overline{a} \mid \getr(x.\putl(v,t)) \mid \putl(u,s) \mid \overline{b}) \textsf{ in } r \]
    rewrites via \textbf{[R2]} to:
    \[ \textsf{let } x_1 \seq x_n \asn (\overline{a} \mid \putl(v,t)[u/x] \mid s \mid \overline{b}) \textsf{ in } r \]
    but also, if $v$ does not contain $x$, rewrites via \textbf{[R9]} to:
    \[ \textsf{let } x_1 \seq x_n \asn (\overline{a} \mid \putl(v,\getr(x.t)) \mid \putl(u,x) \mid \overline{b}) \textsf{ in } r \]
  \item[] (2-10): The term
    \[ \textsf{let } x_1 \seq x_n \asn (\overline{a} \mid \getr(x.\getl(y.t)) \mid \putl(u,s) \mid \overline{b}) \textsf{ in } r \]
    rewrites via \textbf{[R2]} to:
    \[ \textsf{let } x_1 \seq x_n \asn (\overline{a} \mid \getl(y.t)[u/x] \mid s \mid \overline{b}) \textsf{ in } r \]
    but also rewrites via \textbf{[R10]} to:
    \[ \textsf{let } x_1 \seq x_n \asn (\overline{a} \mid \getl(y.\getr(x.t)) \mid \putl(u,x) \mid \overline{b}) \textsf{ in } r \]
  \end{itemize}
  The second group of critical pairs is:
  \begin{itemize}
  \item[] (3-5): The term
    \[ \textsf{let } x_1 \seq x_n \asn (\putl(v,t) \mid \overline{a} \mid \putr(u,s)) \textsf{ in } r \]
    rewrites via \textbf{[R3]} to:
    \[ \putl(v,\textsf{let } x_1 \seq x_n \asn (t \mid \overline{a} \mid \putr(u,s)) \textsf{ in } r) \]
    but also rewrites via \textbf{[R5]} to:
    \[ \putr(u,\textsf{let } x_1 \seq x_n \asn (\putl(v,t) \mid \overline{a} \mid s) \textsf{ in } r) \]
  \item[] (3-6): The term
    \[ \textsf{let } x_1 \seq x_n \asn (\putl(v,t) \mid \overline{a} \mid \getr(x.s)) \textsf{ in } r \]
    rewrites via \textbf{[R3]} to:
    \[ \putl(v,\textsf{let } x_1 \seq  x_n \asn (t \mid \overline{a} \mid \getr(x.s)) \textsf{ in } r) \]
    but also rewrites via \textbf{[R6]} to:
    \[ \getr(x.\textsf{let } x_1 \seq x_n \asn (\putl(v,t) \mid \overline{a} \mid s) \textsf{ in } r) \]
  \item[] (4-5): The term
    \[ \textsf{let } x_1 \seq x_n \asn (\getl(y.t) \mid \overline{a} \mid \putr(v,s)) \textsf{ in } r  \]
    rewrites via \textbf{[R4]} to:
    \[ \getl(y.\textsf{let } x_1 \seq x_n \asn (t \mid \overline{a} \mid \putr(v,s)) \textsf{ in } r) \]
    but also rewrites via \textbf{[R5]} to:
    \[ \putr(v,\textsf{let } x_1 \seq x_n \asn (\getl(y.t) \mid \overline{a} \mid s) \textsf{ in } r) \]
  \item[] (4-6): The term
    \[ \textsf{let } x_1 \seq x_n \asn (\getl(y.t) \mid \overline{a} \mid \getr(x.s)) \textsf{ in } r \]
    rewrites via \textbf{[R4]} to:
    \[ \getl(t.\textsf{let } x_1 \seq x_n \asn (t \mid \overline{a} \mid \getr(x.s)) \textsf{ in } r) \]
    but also rewrites via \textbf{[R6]} to:
    \[ \getr(y.\textsf{let } x_1 \seq x_n \asn (\getl(y.t) \mid \overline{a} \mid s) \textsf{ in } r) \]
  \end{itemize}
  The third and final group of critical pairs is:
  \begin{itemize}
  \item[] (5-7): The term
    \[ \textsf{let } x_1 \seq x_n \asn (\overline{a} \mid \putr(v,\putl(u,t))) \textsf{ in } r\]
    rewrites via \textbf{[R5]} to:
    \[ \putr(v,\textsf{let } x_1 \seq x_n \asn (\overline{a} \mid \putl(u,t)) \textsf{ in } r) \]
    but also rewrites via \textbf{[R7]} to:
    \[ \textsf{let } x_1 \seq x_n \asn (\overline{a} \mid \putl(u,\putr(v,t))) \textsf{ in } r \]
  \item[] (5-8): The term
    \[ \textsf{let } x_1 \seq x_n \asn (\overline{a} \mid \putr(v,\getl(x.t))) \textsf{ in } r \]
    rewrites via \textbf{[R5]} to:
    \[ \putr(v,\textsf{let } x_1 \seq x_n \asn (\overline{a} \mid \getl(x.t)) \textsf{ in } r) \]
    but also rewrites via \textbf{[R8]} to:
    \[ \textsf{let } x_1 \seq x_n \asn (\overline{a} \mid \getl(x.\putr(v,t))) \textsf{ in } r \]
  \item[] (6-9): The term
    \[ \textsf{let } x_1 \seq x_n \asn (\overline{a} \mid \getr(x.\putl(v,t))) \textsf{ in } r \]
    rewrites via \textbf{[R6]} to:
    \[ \getr(x.\textsf{let } x_1 \seq x_n \asn (\overline{a} \mid \putl(v,t)) \textsf{ in } r) \]
    but also, if $v$ does not contain $x$, rewrites via \textbf{[R9]} to:
    \[ \textsf{let } x_1 \seq x_n \asn (\overline{a} \mid \putl(v,\getr(x.t))) \textsf{ in } r \]
  \item[] (6-10): The term
    \[ \textsf{let } x_1 \seq x_n \asn (\overline{a} \mid \getr(x.\getl(y.t))) \textsf{ in } r \]
    rewrites via \textbf{[R6]} to:
    \[ \getr(x.\textsf{let } x_1 \seq x_n \asn (\overline{a} \mid \getl(y.t)) \textsf{ in } r) \]
    but also rewrites via \textbf{[R10]} to:
    \[ \textsf{let } x_1 \seq x_n \asn (\overline{a} \mid \getl(y. \getr(x.t))) \textsf{ in } r \]
  \end{itemize}
  We will show how to join one class of critical pair from each of the three groups. The others in the same class are joinable similarly. We begin with the first group, from which we consider critical pairs of the form (1-7). Recall that the apex of the divergence is of the form:
  \[ \textsf{let } x_1 \seq x_n \asn (\overline{a} \mid \putr(v,\putl(u,t)) \mid \getl(x.s) \mid \overline{b}) \textsf{ in } r \]
  If $\overline{a}$ is the empty sequence then the divergence in question is:
  \begin{align*}
    & \textsf{let } x_1 \seq x_n \asn (\putl(u,t) \mid s[v/x] \mid \overline{b}) \textsf{ in } r 
    \\& \from \textsf{let } x_1 \seq x_n \asn (\putr(v,\putl(u,t)) \mid \getl(x.s) \mid \overline{b}) \textsf{ in } r 
    \\& \to \textsf{let } x_1 \seq x_n \asn (\putl(u,\putr(v,t)) \mid \getl(x.s) \mid \overline{b}) \textsf{ in } r 
  \end{align*}
  which is joinable as in:
  \begin{align*}
    & \textsf{let } x_1 \seq x_n \asn (\putl(u,t) \mid s[v/x] \mid \overline{b}) \textsf{ in } r
    \\& \to \putl(u,\textsf{let } x_1 \seq x_n \asn (t \mid s[v/x] \mid \overline{b}) \textsf{ in } r)
    \\& \from \putl(u,\textsf{let } x_1 \seq x_n \asn (\putr(v,t) \mid \getl(x.s) \mid \overline{b}) \textsf{ in } r)
    \\& \from \textsf{let } x_1 \seq x_n \asn (\putl(u,\putr(v,t)) \mid \getl(x.s) \mid \overline{b}) \textsf{ in } r 
  \end{align*}
  If $\overline{a}$ is nonempty then $\mathcal{L} = \textsf{let } x_1,\ldots,x_n \asn (\overline{a} \mid \blacksquare) \textsf{ in } r : \wiggle{U}{\blacksquare}{\Delta}{\blacksquare} \rightsquigarrow \wiggle{V}{\Gamma,\blacksquare}{B}{\blacksquare W}$ is an effective term context, and Lemma~\ref{lem:effectful-active} gives that there is an active term context $\mathcal{L}'$ (of the same type) such that $\mathcal{L} \stackrel{*}{\to} \mathcal{L}'$ and moreover $\mathcal{L}'[t_1 \seq t_n] = \mathcal{L}''[\getr(y.h),t_1,\ldots,t_n]$. Then the divergence in question is:
  \begin{align*}
    & \mathcal{L}[\putl(u,t),s[v/x],\overline{b}]
    \from \mathcal{L}[\putr(v,\putl(u,t)),\getl(x.s), \overline{b}]
    \to \mathcal{L}[\putl(u,\putr(v,t)),\getl(x.s),\overline{b}]
  \end{align*}
which is joinable as in:
\begin{align*}
  & \mathcal{L}[\putl(u,t) , s[v/x] , \overline{b}]
    \stackrel{*}{\to} \mathcal{L}'[\putl(u,t) , s[v/x] , \overline{b}]
    = \mathcal{L}''[\getr(y.h) , \putl(u,t) , s[v/x] , \overline{b}]
  \\& \to \mathcal{L}''[h[t/y] , t , s[v/x] , \overline{b}]
  \from \mathcal{L}''[h[u/y] ,\putr(v,t) , \getl(x.s) , \overline{b}]
  \\& \from \mathcal{L}''[\getr(y.h) , \putl(u,\putr(v,t)) , \getl(x.s) , \overline{b}]
  = \mathcal{L}'[\putl(u,\putr(v,t)) , \getl(x.s) , \overline{b}]
  \\& \stackrel{*}{\from} \mathcal{L}[\putl(u,\putr(v,t)) , \getl(x.s) , \overline{b}]
\end{align*}
Thus, all critical pairs of the form (1-7) are joinable. The rest of the critical pairs from the first group are joinable in a similar fashion.

From the second group, we show how to join critical pairs of the form (3-5). Recall that the divergence in question is:
\begin{align*}
  & \putl(v,\textsf{let } x_1 \seq x_n \asn (t \mid \overline{a} \mid \putr(u,s)) \textsf{ in } r)
  \\& \from \textsf{let } x_1 \seq x_n \asn (\putl(v,t) \mid \overline{a} \mid \putr(u,s)) \textsf{ in } r
  \\& \to \putr(u,\textsf{let } x_1 \seq x_n \asn (\putl(v,t) \mid \overline{a} \mid s) \textsf{ in } r)
\end{align*}
which is joinable as in:
\begin{align*}
  & \putl(v,\textsf{let } x_1 \seq x_n \asn (t \mid \overline{a} \mid \putr(u,s)) \textsf{ in } r)
    \to \putl(v,\putr(u,\textsf{let } x_1 \seq x_n \asn (t \mid \overline{a} \mid s) \textsf{ in } r))
  \\& \from \putr(u,\putl(v,\textsf{let } x_1 \seq x_n \asn (t \mid \overline{a} \mid s) \textsf{ in } r))
  \from \putr(u,\textsf{let } x_1 \seq x_n \asn (\putl(v,t) \mid \overline{a} \mid s) \textsf{ in } r)
\end{align*}
The rest of the critical pairs from the second group are joinable in a similar fashion.

From the third group, we show how to join critical pairs of the form (5-7). The apex of such a critical pair is always of the form:
\[ \textsf{let } x_1 \seq x_n \asn (\overline{a} \mid \putr(v,\putl(u,t))) \textsf{ in } r\]
If $\overline{a}$ is the empty sequence then the divergence in question is:
\begin{align*}
  & \putr(v,\textsf{let } x \asn (\putl(u,t)) \textsf{ in } r)
    \from \textsf{let } x \asn (\putr(v,\putl(u,t))) \textsf{ in } r
    \to \textsf{let } x \asn (\putl(u,\putr(v,t))) \textsf{ in } r
\end{align*}
which is joinable as in:
\begin{align*}
  & \putr(v,\textsf{let } x \asn (\putl(u,t)) \textsf{ in } r)
    \to \putr(v,\putl(u,\textsf{let } x \asn (t) \textsf{ in } r))
    \to \putl(u,\putr(v,\textsf{let } x \asn (t) \textsf{ in } r))
  \\& \from \putl(u,\textsf{let } x \asn (\putr(v,t)) \textsf{ in } r)
  \from \textsf{let } x \asn (\putl(u,\putr(v,t))) \textsf{ in } r
\end{align*}
If $\overline{a}$ is nonempty, then $\mathcal{L} = \textsf{let } x_1,\ldots,x_n \asn (\overline{a} \mid \blacksquare) \textsf{ in } r : \wiggle{U}{\blacksquare}{\Delta}{\blacksquare} \rightsquigarrow \wiggle{V}{\Gamma,\blacksquare}{B}{\blacksquare W}$ is an effective term context. Lemma~\ref{lem:effectful-active} gives that $\mathcal{L} \stackrel{*}{\to} \mathcal{L}'$ for some active $\mathcal{L}'$, and moreover that $\mathcal{L}'[t_1 \seq t_n] = \mathcal{L}''[\getr(y.h) , t_1 \seq t_n]$ for some $\mathcal{L}''$. Then the divergence is question is:
\begin{align*}
  & \putr(v,\mathcal{L}[\putl(u,t)])
    \from \mathcal{L}[\putr(v,\putl(u,t))]
    \to \mathcal{L}[\putl(u,\putr(v,t))]
\end{align*}
Now Lemma~\ref{lem:boundary-multipop} gives:
\[
\putr(v,\mathcal{L}''[h[u/y], t])
  \stackrel{*}{\to} h
  \stackrel{*}{\from} \mathcal{L}''[h[u/y] , \putr(v,t)]
\]
and so the divergence in question is joinable as in:
\begin{align*}
  & \putr(v,\mathcal{L}[\putl(u,t)])
    \stackrel{*}{\to} \putr(v,\mathcal{L}'[\putl(u,t)])
    = \putr(v,\mathcal{L}''[\getr(y.h), \putl(u,t)])
  \\& \to \putr(v,\mathcal{L}''[h[u/y], t])
  \stackrel{*}{\to} h
  \stackrel{*}{\from} \mathcal{L}''[h[u/y] , \putr(v,t)]
  \from \mathcal{L}''[\getr(y.h) , \putl(u,\putr(v,t))]
  \\& = \mathcal{L}'[\putl(u,\putr(v,t))]
  \stackrel{*}{\from} \mathcal{L}[\putl(u,\putr(v,t))]
\end{align*}
Thus, all critical pairs of the form (5-7) are joinable. The rest of the critical pairs in the third group are joinable similarly. Having shown that all of the critical pairs are joinable, we conclude that $\to$ is locally confluent.
\end{proof}

\subsection{Lemma~\ref{lem:id-pop}}
\begin{proof}
  By induction on the structure of $U$. The cases are as follows:
  \begin{itemize}
  \item If $U = \lambda$, then we have:
    \begin{align*}
      & \textsf{let } x_1 \seq x_n \asn (id_\lambda(t_1) \mid \cdots id_\lambda(t_n)) \textsf{ in } t
        = \textsf{let } x_1 \seq x_n \asn  (t_1 \mid \cdots \mid t_n) \textsf{ in } t
        \\&= id_\lambda(\textsf{let } x_1 \seq x_n \asn (t_1 \mid \cdots \mid t_n) \textsf{ in } t)
    \end{align*}
  \item If $U = A^\circ U'$, then we have:
    \begin{align*}
      & \textsf{let } x_1 \seq x_n \asn (id_{A^\circ U'}(t_1) \mid \cdots \mid id_{A^\circ U'}(t_n)) \textsf{ in } t
      \\& = \textsf{let } x_1 \seq x_n \asn (\getl(x. \putr(x, id_{U'}(t_1))) \mid \cdots \mid \getl(x.\putr(x,id_{U'}(t_n)))) \textsf{ in } t
      \\& \to \getl(x. \textsf{let } x_1 \seq x_n \asn ( \putr(x, id_{U'}(t_1)) \mid \cdots \mid \getl(x.\putr(x,id_{U'}(t_n)))) \textsf{ in } t)
      \\& \stackrel{*}{\to} \getl(x. \textsf{let } x_1 \seq x_n \asn (id_{U'}(t_1) \mid \cdots \mid \putr(x,id_{U'}(t_n))[x/x]) \textsf{ in } t)
      \\&= \getl(x. \textsf{let } x_1 \seq x_n \asn (id_{U'}(t_1) \mid \cdots \mid \putr(x,id_{U'}(t_n))) \textsf{ in } t)
      \\& \to \getl(x. \putr(x, \textsf{let } x_1 \seq x_n \asn (id_{U'}(t_1) \mid \cdots \mid id_{U'}(t_n)) \textsf{ in } t))
      \\& \stackrel{*}{\to} \getl(x. \putr(x, id_{U'}(\textsf{let } x_1 \seq x_n \asn (t_1 \mid \cdots \mid t_n) \textsf{ in } t)))
      \\& = id_{A^\circ U'}(\textsf{let } x_1 \seq x_n \asn (t_1 \mid \cdots \mid t_n) \textsf{ in } t)
    \end{align*}
  \item The case where $U = A^\bullet U'$ is similar
  \end{itemize}
  The claim follows.
\end{proof}

\subsection{Lemma~\ref{lem:interact-across-id}}
\begin{proof}
  \begin{enumerate}
  \item By induction on $k$. If $k = 0$, then \textbf{[R2]} gives:
    \[
      f \circ (\dots \! , \textsf{getR}(x.t) , \textsf{putL}(v,s) , \! \dots)
      \to f \circ (\dots \! , t[v/x] , s ,\! \dots)
    \]
    For the inductive case, if the claim holds for $k$ then we have:
    \begin{align*}
      & f \circ (\dots \!, \textsf{getR}(x.t) , id_{A^\bullet U}(h_1) \seq  id_{A^\bullet U}(h_k) , id_{A^\bullet U}(h) , \textsf{putL}(v,s) , \! \dots)
      \\& = f \circ (\dots \! , \textsf{getR}(x.t) , id_{A^\bullet U}(h_1) \seq  id_{A^\bullet U}(h_k) , \textsf{getR}(y.\textsf{putL}(y,id_{U}(h))) , \textsf{putL}(v,s) , \! \dots)
      \\& \to f \circ (\dots \! , \textsf{getR}(x.t) , id_{A^\bullet U}(h_1) \seq  id_{A^\bullet U}(h_k) , \textsf{putL}(v,id_{U}(h)) , s , \! \dots)
      \\& \stackrel{*}{\to} f \circ (\dots \! , t[v/x] , id_{U}(h_1) \seq  id_{U}(h_k), id_U(h) , s , \! \dots)
    \end{align*}
    and the top-level claim holds by induction.
  \item Similar to (i).
  \end{enumerate}
\end{proof}

\subsection{Lemma~\ref{lem:pop-across-id}}

\begin{proof}
  \begin{enumerate}
  \item By induction on $k$. If $k = 0$ then \textbf{[R3]} gives:
    \begin{align*}
      & f \circ (\putl(v,t),g_1 \seq g_n)
        \to \putl(v, f \circ (t,g_1 \seq g_n))
    \end{align*}
    For the inductive case, if the claim holds for $k$ then we have:
    \begin{align*}
      & f \circ (id_{A^\bullet U}(h_1) \seq id_{A^\bullet U}(h_k) , id_{A^\bullet U}(h) , \putl(v,t) , g_1 \seq g_n)
      \\& = f \circ (id_{A^\bullet U}(h_1) \seq id_{A^\bullet U}(h_k) , \getr(x.\putl(x, id_{U}(h))) , \putl(v,t) , g_1 \seq g_n)
      \\& \to f \circ (id_{A^\bullet U}(h_1) \seq id_{A^\bullet U}(h_k) , \putl(v, id_{U}(h)) , t , g_1 \seq g_n)
      \\& \stackrel{*}{\to} \putl(v, f \circ (id_{U}(h_1) \seq id_{U}(h_k) , id_{U}(h) , t , g_1 \seq g_n))
    \end{align*}
    and the top-level claim holds by induction.
  \item By induction on $k$. If $k = 0$ then \textbf{[R4]} gives:
    \begin{align*}
      & f \circ (\getl(x.t),g_1 \seq g_n) \to \getl(x.f \circ(t,g_1 \seq g_n))
    \end{align*}
    For the inductive case, if the claim holds for $k$ then we have:
    \begin{align*}
      & f \circ (id_{A^\circ U}(h_1) \seq id_{A^\circ U}(h_k) , id_{A^\circ U}(h) , \getl(x.t) , g_1 \seq g_n)
      \\& = f \circ (id_{A^\circ U}(h_1) \seq id_{A^\circ U}(h_k) , \getl(y.\putr(y, id_{U}(h))) , \getl(x.t) , g_1\seq g_n)
      \\& \stackrel{*}{\to} \getl(y.f \circ (id_{U}(h_1) \seq id_{U}(h_k) , \putr(y,id_{U}(h)) , \getl(x.t) , g_1 \seq g_n))
      \\& \to \getl(y. f \circ (id_{U}(h_1) \seq id_{U}(h_k) , id_{U}(h) , t[y/x] , g_1 \seq g_n))
      \\& = \getl(x. f \circ (id_{U}(h_1) \seq id_{U}(h_l) , id_{U}(h_k) , t , g_1 \seq g_n)
    \end{align*}
    and the top-level claim holds by induction.
  \item Similar to (i).
  \item Similar to (ii).
  \end{enumerate}
\end{proof}

\subsection{Lemma~\ref{lem:associative}}
\begin{proof}
  If $n = 0$ then each $m_n = 0$ and for some $U,W \in \ex{\mathcal{M}}_0$ then Lemma~\ref{lem:id-property} gives:
  \begin{align*}
    & f \circ (\,)_{UW} = id_{UW}(f) = id_U(id_W(f)) = id_U(f \circ (\,)_W) = (f \circ (\,)_W) \circ (\,)_U
  \end{align*}
  as required. If $n > 0$ but each $m_i = 0$ then Lemma~\ref{lem:id-pop} gives:
  \begin{align*}
    & f \circ (g_1 \circ (\,)_U \seq g_n \circ (\,)_U)
      = f \circ (id_U(g_1) \seq id_U(g_n))
      = \textsf{let } y_1\seq y_n \asn (id_U(g_1) \mid \cdots \mid id_U(g_n)) \textsf{ in } f
    \\& \stackrel{*}{\to} id_U(\textsf{let } y_1 \seq y_n \asn (g_1 \mid \cdots \mid g_n) \textsf{ in } f)
    = (f \circ (g_1 \seq g_n)) \circ (\,)_U
  \end{align*}
  as required. If $n > 0$ and $m_i > 0$ for at least one $1 \leq i \leq n$, then we proceed by induction on $\sum_{i=1}^n(\sum_{j=1}^{m_n} \#(h_i^j))$. The base case is when this is zero, which is clearly impossible under our assumption that $m_i > 0$ for at least one $i$. For the inductive case, we consider a number of subcases:
  \begin{itemize}
  \item If some $h_i^j$ is a let-binding then it reduces by Lemma~\ref{lem:let-reduce} and the claim follows from the inductive hypothesis.
  \item If $h_i^j = [v_i^j]$ for all $i,j$ then for all $1 \leq i \leq n$ we have
    \[
      g_i \circ (h_i^1 \seq h_i^{m_i}) \stackrel{*}{\to} g_i[v_i^1\seq v_i^{m_i}/x_i^1 \seq x_i^{m_i}]
    \]
  In particular, if $m_i > 0$ then we have this via \textbf{[R1]}, as in:
  \begin{align*}
    & g_i \circ (h_i^1 \seq h_i^{m_i})
      = g_i \circ ([v_i^1] \seq [v_i^{m_i}])
      = \textsf{let } x_i^1 \seq x_i^{m_i} \asn ([v_i^1] \mid \cdots \mid [v_i^{m_i}]) \textsf{ in } g_i
      \to g_i[v_i^1\seq v_i^{m_i}/x_i^1 \seq x_i^{m_i}]
  \end{align*}
  and if $m_i = 0$ then typing constraints ensure that $g_i \circ (h_i^1 \seq h_i^{m_i}) = g_i \circ (\,)_I$ and we have:
  \begin{align*}
    & g_i \circ (h_i^1 \seq h_i^{m_i}) = g_i \circ (\,)_I = id_I(g_i) = g_i
  \end{align*}
  where we understand $t[\overline{v}/\overline{x}]$ to be $t$ when $\overline{v}$ and $\overline{x}$ are empty. Then we have: 
  \begin{align*}
    & f \circ (g_1 \circ (h_1^1 \seq h_1^{m_1}) \seq g_n \circ (h_n^1 \seq h_n^{m_n}))
     \stackrel{*}{\to} f \circ (g_1[v_1^1 \seq v_1^{m_1} / x_1^1 \seq x_1^{m_1}] \seq g_n[v_n^1 \seq v_n^{m_n}/x_n^1 \seq x_n^{m_n}])
    \\& = \textsf{let } y_1 \seq y_n \asn (g_1[v_1^1 \seq v_1^{m_1} / x_1^1 \seq x_1^{m_1}] \mid \cdots \mid g_n[v_n^1 \seq v_n^{m_n}/x_n^1 \seq x_n^{m_n}]) \textsf{ in  } f
    \\&= (\textsf{let } y_1 \seq y_n \asn (g_1 \seq g_n) \textsf{ in } f)[v_1^1 \seq v_1^{m_1} \seq v_n^1 \seq v_n^{m_n}/x_1^1 \seq x_1^{m_1} \seq x_n^1 \seq x_n^{m_n}]
    \\& \stackrel{*}{\from} \textsf{let } x_1^1 \seq x_1^{m_1} \seq x_n^1 \seq x_n^{m_n} \asn ([v_1^1] \mid \cdots \mid [v_1^{m_1}] \mid \cdots \mid [v_n^1] \mid \cdots \mid [v_n^{m_n}]) \textsf{ in } (\textsf{let } y_1 \seq y_n \asn (g_1 \seq g_n) \textsf{ in } f)
    \\& = (f \circ (g_1 \seq g_n)) \circ ([v_1^1] \seq [v_1^{m_1}] \seq [v_n^1] \seq [v_n^{m_n}])
    = (f \circ (g_1 \seq g_n)) \circ (h_1^1 \seq h_1^{m_1} \seq h_n^1 \seq h_n^{m_n})
  \end{align*}
  as required.
\item If no $h_i^j$ is a let-binding but some $h_i^j$ is left-facing, let $(i,j)$ be the least (lexicographically) pair of indices for which this is so. We consider a number of subcases: First, if $j = 1$ and $m_k = 0$ for all $1 \leq k < i$ then in the case where $h_i^j = \putl(v,t)$ Lemma~\ref{lem:pop-across-id} gives:
  \begin{align*}
    & f \circ (g_1 \circ (h_1^1 \seq h_1^{m_1}) \seq g_{i-1} \circ (h_{i-1}^1 \seq h_{i-1}^{m_{i-1}}),g_i \circ (h_i^1 \seq h_i^{m_i}) \seq g_n \circ (h_n^1 \seq h_n^{m_n}))
    \\&= f \circ (g_1 \circ (\,)_{A^\bullet U} \seq g_{i-1} \circ (\,)_{A^\bullet U},g_i \circ (\putl(v,t),\seq,h_i^{m_i}) \seq g_n\circ (h_n^1 \seq h_n^{m_n}))
    \\& \to f \circ (g_1 \circ (\,)_{A^\bullet U} \seq g_{i-1} \circ (\,)_{A^\bullet U}, \putl(v,g_i \circ (t,\seq,h_i^{m_i})) \seq g_n\circ (h_n^1 \seq h_n^{m_n}))
    \\& \stackrel{*}{\to} \putl(v,f \circ (g_1 \circ (\,)_U\seq g_{i-1} \circ (\,)_U , g_i \circ (t \seq h_i^{m_i})\seq g_n\circ (h_n^1 \seq h_n^{m_n})))
    \\& \stackrel{*}{\leftrightarrow} \putl(v, (f \circ (g_1 \seq g_{i-1},g_i \seq g_n)) \circ (t\seq h_i^{m_i}\seq h_n^1 \seq h_n^{m_n}))
    \\& \stackrel{*}{\from} (f \circ (g_1 \seq g_{i-1},g_i \seq g_n)) \circ (\putl(v,t)\seq h_i^{m_i} \seq h_n^1 \seq h_n^{m_n})
  \end{align*}
  as required. The case where $h_i^j = \getl(x.t)$ is similar. Second, if we have $j=1$ and $m_k > 0$ for some $1 \leq k < i$, consider the largest such $k$. If $h_i^j = \putl(v,t)$ then typing constraints ensure $h_k^{m_k} = \getr(x.s)$ and Lemma~\ref{lem:interact-across-id} gives:
  \begin{align*}
    & f \circ (\ldots \!, g_k \circ (h_k^1 \seq h_k^{m_k}),g_{k+1} \circ (h_{k+1}^{1}\seq h_{k+1}^{m_{k+1}}) \seq g_{i-1} \circ (h_{i-1}^1\seq h_{i-1}^{m_{i-1}}),g_i \circ (h_i^1 \seq h_i^{m_i}) ,\! \ldots)
    \\&= f \circ (\ldots\!, g_k \circ (h_k^1 \seq \getr(x.s)),g_{k+1} \circ (\,)_{A^\bullet U} \seq g_{i-1} \circ (\,)_{A^\bullet U} ,g_i \circ (\putl(v,t) \seq h_i^{m_i}),\!\ldots)
    \\& \stackrel{*}{\to} f \circ (\ldots\!, \getr(x.g_k \circ (h_k^1 \seq s)),g_{k+1} \circ (\,)_{A^\bullet U} \seq g_{i-1} \circ (\,)_{A^\bullet U} ,\putl(v,g_i \circ (t \seq h_i^{m_i})),\!\ldots)
    \\& \stackrel{*}{\to} f \circ (\ldots\!, g_k \circ (h_k^1 \seq s[v/x]),g_{k+1} \circ (\,)_{U} \seq g_{i-1} \circ (\,)_{U} , g_i \circ (t \seq h_i^{m_i}),\!\ldots)
    \\& \stackrel{*}{\leftrightarrow} (f \circ (\ldots \!, g_k,g_{k+1}\seq g_{i-1},g_i ,\!\ldots)) \circ (\ldots\!, h_k^1 \seq s[v/x],t \seq h_i^{m_i} ,\!\ldots)
    \\& \from  (f \circ (\ldots \!, g_k,g_{k+1}\seq g_{i-1},g_i ,\!\ldots)) \circ (\ldots\!, h_k^1 \seq \getr(x.s), \putl(v,t) \seq h_i^{m_i} ,\!\ldots)
    \\&= (f \circ (\ldots \!, g_k,g_{k+1}\seq g_{i-1},g_i ,\!\ldots)) \circ (\ldots \!, h_k^1 \seq h_k^{m_k},h_{k+1}^1 \seq h_{k+1}^{m_{k+1}}\seq h_{i-1}^1 \seq h_{i-1}^{m_{i-1}},h_i^1 \seq h_i^{m_i} ,\!\ldots)
  \end{align*}
  as required. The case where $h_i^j = \getl(x.t)$ is similar. Finally, suppose $j > 1$. If $h_i^j = \putl(v,t)$ then typing constraints, together with the assumption that $(i,j)$ is the smallest pair of indices for which $h_i^j$ is left-facing, ensure that $h_i^{j-1} = \getr(x.s)$ and we have:
  \begin{align*}
    & f \circ (g_1 \circ (h_1^1 \seq h_1^{m_1}) \seq g_i \circ (h_i^1 \seq h_i^{j-1},h_i^j \seq h_i^{m_i}) \seq g_n \circ (h_n^1 \seq h_n^{m_n}))
    \\&= f \circ (g_1 \circ (h_1^1 \seq h_1^{m_1}) \seq g_i \circ (h_i^1 \seq \getr(x.s),\putl(v,t) \seq h_i^{m_i}) \seq g_n \circ (h_n^1 \seq h_n^{m_n}))
    \\& \to f \circ (g_1 \circ (h_1^1 \seq h_1^{m_1}) \seq g_i \circ (h_i^1 \seq s[v/x],t \seq h_i^{m_i}) \seq g_n \circ (h_n^1 \seq h_n^{m_n}))
    \\& \stackrel{*}{\leftrightarrow} (f \circ (g_1 \seq g_i \seq g_n)) \circ (h_1^1 \seq h_1^{m_1} \seq h_i^1 \seq s[v/x],t \seq h_i^{m_i} \seq h_n^1 \seq h_n^{m_n})
    \\& \from (f \circ (g_1 \seq g_i \seq g_n)) \circ (h_1^1 \seq h_1^{m_1} \seq h_i^1 \seq \getr(x.s),\putl(v,t) \seq h_i^{m_i} \seq h_n^1 \seq h_n^{m_n})
    \\&= (f \circ (g_1 \seq g_i \seq g_n)) \circ (h_1^1 \seq h_1^{m_1} \seq h_i^1 \seq h_i^{j-1}, h_i^j \seq h_i^{m_i} \seq h_n^1 \seq h_n^{m_n})
  \end{align*}
  as required. The case where $h_i^j = \getl(x.t)$ is similar.
\item If no $h_i^j$ is left-facing or a let-binding and at least one $h_i^j$ is right-facing, let $(i,j)$ be the greatest (lexicographically) pair of indices for which this is so. Then $j = m_i$ since otherwise typing constraints force $h_i^{j+1}$ to be either left-facing or right-facing, both of which are contradictory. Similarly, for any $(i',j') > (i,j)$ the term $h_{i'}^{j'}$ cannot be left-facing or right-facing, but typing constrains ensure that it cannot be neutral, and it cannot be a let-binding by assumption. Thus, we know that $m_k = 0$ for all $i < k \leq n$. Then when $h_i^j = h_i^{m_i} = \putr(v,t)$ Lemma~\ref{lem:pop-across-id} gives:
  \begin{align*}
    & f \circ (g_1 \circ (h_1^1 \seq h_1^{m_1}) \seq g_i \circ (h_i^1 \seq h_i^{m_i}),g_{i+1} \circ (h_{i+1}^1 \seq h_{i+1}^{m_{i+1}}) \seq g_n \circ (h_n^1 \seq h_n^{m_n}))
    \\&= f \circ (g_1 \circ (h_1^1 \seq h_1^{m_1}) \seq g_i \circ (h_i^1 \seq \putr(v,t)),g_{i+1} \circ (\,)_{A^\circ U} \seq g_n \circ (\,)_{A^\circ U})
    \\& \stackrel{*}{\to} \putr(v, f \circ (g_1 \circ (h_1^1 \seq h_1^{m_1}) \seq g_i \circ (h_i^1 \seq t),g_{i+1} \circ (\,)_{U} \seq g_n \circ (\,)_{U}))
    \\& \stackrel{*}{\leftrightarrow} \putr(v, (f \circ (g_1 \seq g_i,g_{i+1} \seq g_n)) \circ (h_1^1\seq h_1^m \seq h_i^1 \seq t))
    \\& \from  (f \circ (g_1 \seq g_i,g_{i+1} \seq g_n)) \circ (h_1^1\seq h_1^m \seq h_i^1 \seq \putr(v,t))
    \\&= (f \circ (g_1 \seq g_i ,g_{i+1} \seq g_n)) \circ (h_1^1 \seq h_1^{m_1} \seq h_i^1 \seq h_i^{m_i} , h_{i+1}^1 \seq h_{i+1}^{m_{i+1}} \seq h_n^1 \seq h_n^{m_n})
  \end{align*}
  as required. The case where $h_i^j = h_i^{m_i} = \getr(x.t)$ is similar.
\end{itemize}
The claim follows.
\end{proof}

\subsection{Theorem~\ref{thm:vdc-structure}}
\begin{proof}
	Lemma \ref{lem:associative} shows that composition in $[\mathcal{M}]$ is associative. 
	It remains to show that the unit axioms hold. We begin by showing that $1_A \circ (f) \stackrel{*}{\leftrightarrow} f$ whenever this makes sense. We proceed by structural induction on $f$. The base case is when $f = [v]$, in which case:
	\[
	1_A \circ (f) 
	= 1_A \circ ([v])
	= \textsf{let } x \asn ([v]) \textsf{ in } [x]
	\to [x[v/x]]
	= [v]
	= f
	\]
	In case $f = \textsf{let } x_1 \seq x_n \asn (t_1 \mid \cdots \mid t_n) \textsf{ in } t$ then associativity of composition gives:
	\begin{align*}
		& 1_A \circ (f)
		= 1_A \circ (\textsf{let } x_1 \seq x_n \asn (t_1 \mid \cdots \mid t_n) \textsf{ in } t)
		= 1_A \circ (t \circ (t_1 \seq t_n))
		= (1_A \circ (t)) \circ (t_1 \seq t_n)
		\\&\stackrel{*}{\leftrightarrow} t \circ (t_1 \seq t_n)
		= \textsf{let } x_1\seq x_n \asn (t_1 \mid \cdots \mid t_n) \textsf{ in } t
		= f
	\end{align*}
	In case $f = \putr(v,t)$ we have:
	\begin{align*}
		& 1_A \circ (f)
		\asn (\textsf{let } x \asn (\putr(v,t)) \textsf{ in  } [x])
		= \putr(v,\textsf{let } x \asn (t) \textsf{ in } [x])
		= \putr(v,1_A \circ (t))
		\stackrel{*}{\leftrightarrow} \putr(v,t)
	\end{align*}
	and the cases where $f = \getr(y.t)$, $f = \putr(v,t)$, and $f = \getl(y.t)$ are similar. The left unit law follows by induction.
	
	Finally, we have $f \circ (1_{A_1} \seq 1_{A_n}) \stackrel{*}{\leftrightarrow} f$ whenever this makes sense as in:
	\[
	f \circ (1_{A_1} \seq 1_{A_n})
	= \textsf{let } x_1 \seq x_n \asn ([x_1] \mid \cdots \mid [x_n]) \textsf{ in } f
	\to f[x_1\seq x_n / x_1 \seq x_n]
	= f
	\]
	We conclude that $[\mathcal{M}]$ is a virtual double category.
\end{proof}

\subsection{Lemma~\ref{lem:id-property}}

\begin{proof}
  \begin{enumerate}
    \item We proceed by induction on the form of $U$. The cases are as follows:
  \begin{itemize}
  \item If $U = \lambda$, then we have $\llbracket id_U(t) \rrbracket = \llbracket id_\lambda(t) \rrbracket = \llbracket t \rrbracket = id_\lambda \cdot \llbracket t \rrbracket = id_U \cdot \llbracket t \rrbracket$.
  \item If $U = A^\circ U'$, then we have:
    \begin{align*}
      & \llbracket id_U(t) \rrbracket
        = \llbracket id_{A^\circ U'}(t) \rrbracket
        = \llbracket \getl(x.\putr(x.id_{U'}(t))) \rrbracket
        = {A\urcorner} \cdot \llbracket \putr(x.id_{U'}(t)) \rrbracket
      \\&= {A\urcorner} \cdot {A\llcorner} \cdot \llbracket id_{U'}(t) \rrbracket
      = id_{A^\circ} \cdot \llbracket id_{U'}(t) \rrbracket
      = id_{A^\circ} \cdot id_{U'} \cdot \llbracket t \rrbracket
      = id_{A^\circ U'} \cdot \llbracket t \rrbracket
      = id_U \cdot \llbracket t \rrbracket
    \end{align*}
  \item The case where $U = A^\bullet U'$ is similar.
  \end{itemize}
  The claim follows.
\item We proceed by structural induction on the form of $U$. The cases are as follows:
  \begin{itemize}
  \item If $U = \lambda$, then we have $id_{UW}(t) = id_{W}(t) = id_{\lambda}(id_{W}(t)) = id_U(id_W(t))$.
  \item If $U = A^\circ U'$, then we have:
    \begin{align*}
      & id_{UW}(t)
        = id_{A^\circ U' W}(t)
        = \getl(x.\putr(x.id_{U'W}(t)))
        = \getl(x.\putr(x.id_{U'}(id_{W}(t))))
        \\&= id_{A^\circ U'}(id_{W}(t))
      = id_{UW}(t)
    \end{align*}
    \item The case where $U = A^\bullet U'$ is similar.
    \end{itemize}
    The claim follows.
\end{enumerate}
\end{proof}

\subsection{Theorem~\ref{thm:functor}}
\begin{proof}
  That $\llbracket - \rrbracket$ is well-defined on $\stackrel{*}{\leftrightarrow}$-equivalence classes of terms follows from Lemma~\ref{lem:interpretation-coherent}. We must show that $\llbracket - \rrbracket$ preserves composition and identities. For identities, we have that: \[ \llbracket 1_A \rrbracket = \llbracket x : A \Vdash [x] : (I,A,I) \rrbracket = \corner{(x : A \vdash x : A)} = \corner{(1_A)} = \corner{1_{(A)}} = 1_{(A)} \]
  For composition, there are two cases. First, Lemma~\ref{lem:id-property} gives:
  \[ \llbracket f \circ (\,)_U\rrbracket = \llbracket id_U(f)\rrbracket = id_U \cdot \llbracket f \rrbracket = \llbracket f \rrbracket \circ (\,)_U \]
  Second, when $n \geq 1$ we have:
  \begin{align*}
    & \llbracket f \circ (g_1 \seq g_n) \rrbracket
      = \llbracket \textsf{let } x_1 \seq x_n \asn (g_1 \mid \cdots \mid g_n) \textsf{ in } f \rrbracket
      = (\llbracket g_1 \rrbracket \mid \cdots \mid \llbracket g_n \rrbracket) \cdot \llbracket f \rrbracket
      = \llbracket f \rrbracket \circ (\llbracket g_1 \rrbracket \seq \llbracket g_n \rrbracket)
  \end{align*}
  The claim follows.
\end{proof}

\section{Strict Double Categories and Virtual Double Categories with a Single Object}\label{app:abstract}

In this appendix we give a very brief introduction to strict single-object double categories and single-object virtual double categories, which make an appearance in our development. In particular, we introduce string diagrams for single-object double categories, which we find helpful in developing an intuition about both the free cornering of a monoidal category, and about the term calculus presented in the body of this paper. We also give a definition of single-object virtual double category, although our presentation is minimal and we strongly recommend that the interested reader consult e.g., Leinster~\cite{Leinster2004} (in which virtual double categories are called $\textbf{fc}$-multicategories) or Crutwell and Shulman~\cite{Crutwell2010} for an introduction. 

\subsection{Single-Object Double Categories}

\begin{definition}\label{def:tiloid}
  A (strict) \emph{single-object double category} $\X$ consists of monoids $(\X_H,\otimes,I)$ and $(\X_V,\cdot,I)$, called the \emph{horizontal} and \emph{vertical} edge monoid, respectively, together with a \emph{cell-set} $\X\cells{U}{A}{B}{W}$ for each $A,B \in \X_H$ and $U,W \in \X_V$. Cells are subject to \emph{vertical composition} $a \cdot b$ and \emph{horizontal composition} $a \mid b$, as in:
  \begin{mathpar}
    \inferrule{a \in \X\floatcells{U}{A}{B}{W} \\ b \in \X\floatcells{U'}{B}{C}{W'}}{a \cdot b \in \X\floatcells{U \cdot U'}{A}{C}{W \cdot W'}}

    \inferrule{a \in \X\floatcells{U}{A\phantom{'}}{B}{W} \\ b \in \X\floatcells{W}{A'}{B'}{V}}{a \mid b \in \X\floatcells{U}{A \otimes A'}{B \otimes B'}{V}}
  \end{mathpar}
  Moreover, for each $A \in \X_H$ there is a \emph{vertical identity} cell $1_A \in \X\cells{I}{A}{A}{I}$ and for each $U \in \X_V$ a \emph{horizontal identity} cell $id_U \in \X\cells{U}{I}{I}{U}$. This data must satisfy the following equations:
  \begin{mathpar}
    1_A \cdot a = a = a \cdot 1_B

    1_I = id_I

    (a \mid b) \cdot (c \mid d) = (a \cdot c) \mid (b \cdot d)
    
    id_u \mid a = a = a \mid 1_W

    (a \cdot b) \cdot c = a \cdot (b \cdot c)
    
    (a \mid b) \mid c = a \mid (b \mid c)

    1_U \cdot 1_W = 1_{U \cdot W}

    1_A | 1_B = 1_{A \otimes B}
  \end{mathpar}
  We sometimes write $\X$ to denote the collection of all cells in $\X$.
\end{definition}

It is often illuminating to depict the cells of a single-object double category as \emph{string diagrams}, as in:
\begin{mathpar}
  a : \X\floatcells{U}{A}{B}{W}
  \hspace{0.3cm}
  \leftrightsquigarrow
  \hspace{0.3cm}
  \includegraphics[height=2cm,align=c]{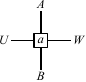}
\end{mathpar}
When a given boundary of a cell $a$ is the unit $I$ of the appropriate monoid, we omit the corresponding wire in the string diagram. In particular we depict vertical and horizontal identity cells as follows:
\begin{mathpar}
  1_A
  \hspace{0.3cm}
  \leftrightsquigarrow
  \hspace{0.3cm}
  \includegraphics[height=1.4cm,align=c]{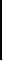}

  id_U
  \hspace{0.3cm}
  \leftrightsquigarrow
  \hspace{0.3cm}
  \includegraphics[height=1.4cm,align=c]{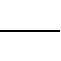}
\end{mathpar}
Vertical and horizontal composition are depicted by juxtaposing the component cells:
\begin{mathpar}
  a \cdot b
  \hspace{0.3cm}
  \leftrightsquigarrow
  \hspace{0.3cm}
  \includegraphics[height=1.4cm,align=c]{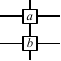}

  a \mid b
  \hspace{0.3cm}
  \leftrightsquigarrow
  \hspace{0.3cm}
  \includegraphics[height=1.4cm,align=c]{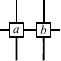}
\end{mathpar}
One way to motivate the equations in Definition~\ref{def:tiloid} is that they make these string diagrams unambiguous, in the sense that each diagram corresponds to exactly one cell of the double category in question. For example, for the diagram below on the left to be unambiguous we require that $(a \mid b) \cdot (c \mid d) = (a \cdot c) \mid (b \cdot d)$ holds, for the diagram below right to be unambiguous we require that $(a \mid b) \mid c = a \mid (b \mid c)$ holds, and for the empty diagram to be unambiguous we require that $1_I = id_I$ holds.
\begin{mathpar}
  \includegraphics[height=1.4cm,align=c]{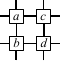}

  \includegraphics[height=1.4cm,align=c]{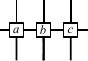}
\end{mathpar}

Every single-object double category $\X$ defines a monoidal category $\bv\X$ of \emph{vertical cells} in $\X$. The object monoid is $\X_H$, hom-sets are given by $\bv\X(A,B) = \X\cells{I}{A}{B}{I}$, with composition and identities given by vertical composition and identities in $\X$. The tensor product of morphisms is given by horizontal composition in $\X$. Similarly, any single-object double category $\X$ defines a monoidal category $\bh\X$ of \emph{horizontal cells} in $\X$ with $\bh\X(U,W) = \X\cells{U}{I}{I}{W}$. 

\subsection{Single-Object Virtual Double Categories}

\begin{definition}
  A \emph{single-object virtual double category} $\mathcal{V}$ consists of a set $\mathcal{V}_H$, called the \emph{horizontal edge set}, a monoid $(\mathcal{V}_V,\cdot,I)$ called the \emph{vertical edge monoid}, together with a \emph{cell-set} $\mathcal{V}\cells{U}{\Gamma}{B}{W}$ for each $\Gamma \in \mathcal{V}_H^*$, $B \in \mathcal{V}_H$, and $U,W \in \mathcal{V}_V$. Cells are subject to a composition operation, which we will explain in terms of \emph{cell paths}. If $U,W \in \mathcal{V}_V$ then a \emph{cell path from $U$ to $W$} consists of a sequence $(a_1,\ldots,a_n)$ of cells $a_i : \mathcal{V}\cells{U_{i-1}}{\Gamma_i}{A_i}{U_i}$ such that $U_0 = U$ and $U_n = W$. For each $U\in \mathcal{V}_V$ There is an empty cell path $(\,)_U$ from $U$ to $U$. Now for any cell $b : \mathcal{V}\cells{P}{A_1,\ldots,A_n}{B}{Q}$ and any cell-path $(a_1,\ldots,a_n)$ from $U$ to $W$ with $a_i : \mathcal{V}\cells{U_{i-1}}{\Gamma_i}{A_i}{U_i}$ there is a \emph{composite} cell $b \circ (a_1\seq a_n) : \mathcal{V}\cells{UP}{\Gamma_1,\ldots,\Gamma_n}{B}{WQ}$. Moreover, for each $A \in \mathcal{V}_H$ there is an \emph{identity cell} $1_A \in \mathcal{V}\cells{I}{A}{A}{I}$. This data must be such that composition is associative and unital, meaning that we have:
  \begin{align*}
    & f \circ  (g_1 \circ (h_1^1 \seq h_1^{k_1}), \ldots, 
      g_n \circ (h_n^1 \seq  h_n^{k_n})) 
      =  (f \circ  (g_1 \seq  g_n)) \circ (h_1^1 \seq h_1^{k_1} \seq  h_n^1 \seq h_n^{k_n}) 
  \end{align*}
  and
  \begin{align*}
    f \circ (1_{A_1}, \ldots, 1_{A_n}) = f = 1_B \circ f
  \end{align*}
  whenever the composites make sense. 
\end{definition}

\end{document}